\def\todaysdate{4\textsuperscript{th} November 2020}
\definecolor{lightblue}{rgb}{0.8,0.8,1}
\numberwithin{equation}{section}
\numberwithin{figure}{section}
\definecolor{vdarkred}{rgb}{0.7,0,0}
\declaretheoremstyle[
  spaceabove=\topsep,
  spacebelow=\topsep,
  headpunct=,
  numbered=no,
  postheadspace=1ex,
  headfont=\color{vdarkred}\normalfont\bfseries,
  bodyfont=\normalfont\itshape,
]{colored}
\declaretheoremstyle[
  spaceabove=\topsep,
  spacebelow=\topsep,
  headpunct=,
  numbered=no,
  postheadspace=1ex,
  headfont=\normalfont\bfseries,
  bodyfont=\normalfont\itshape,
]{italic}
\declaretheoremstyle[
  spaceabove=\topsep,
  spacebelow=\topsep,
  headpunct=,
  numbered=no,
  postheadspace=1ex,
  headfont=\normalfont\bfseries,
  bodyfont=\normalfont\upshape,
]{upright}
\declaretheorem[style=italic,name=Theorem,numbered=yes,numberwithin=section]{thm}
\declaretheorem[style=italic,name=Lemma,numbered=yes,numberlike=thm]{lem}
\declaretheorem[style=italic,name=Proposition,numbered=yes,numberlike=thm]{prop}
\declaretheorem[style=italic,name=Corollary,numbered=yes,numberlike=thm]{coro}
\declaretheorem[style=italic,name=Theorem,numbered=yes,numberwithin=section]{athm}
\declaretheorem[style=italic,name=Proposition,numbered=yes,numberlike=athm]{aprop}
\declaretheorem[style=italic,name=Corollary,numbered=yes,numberlike=athm]{acoro}
\declaretheorem[style=upright,name=Definition,numbered=yes,numberlike=thm]{defn}
\declaretheorem[style=upright,name=Remark,numbered=yes,numberlike=thm]{rmk}
\declaretheorem[style=upright,name=Example,numbered=yes,numberlike=thm]{eg}
\declaretheorem[style=upright,name=Notation,numbered=yes,numberlike=thm]{notation}
\declaretheorem[style=upright,name=Observation,numbered=yes,numberlike=thm]{observation}
\declaretheorem[style=upright,name=Assumption,numbered=yes,numberlike=thm]{assumption}
\renewcommand*{\@seccntformat}[1]{\upshape\csname the#1\endcsname.\hspace{1ex}}
\renewcommand*{\section}{\@startsection{section}{1}{\z@}%
	{2.5ex \@plus 1ex \@minus 0.2ex}%
	{1.5ex \@plus 0.2ex}%
	{\normalfont\normalsize\bfseries}}
\renewcommand*{\subsection}{\@startsection{subsection}{2}{\z@}%
	{2.5ex \@plus 1ex \@minus 0.2ex}%
	{-1.5ex \@plus -0.2ex}%
	{\normalfont\normalsize\bfseries}}
\renewcommand*{\subsubsection}{\@startsection{subsubsection}{3}{\z@}%
	{2.5ex \@plus 1ex \@minus 0.2ex}%
	{-1.5ex \@plus -0.2ex}%
	{\normalfont\normalsize\bfseries}}
\renewcommand*{\paragraph}{\@startsection{paragraph}{4}{\z@}%
	{2.5ex \@plus 1ex \@minus 0.2ex}%
	{-1.5ex \@plus -0.2ex}%
	{\normalfont\normalsize\bfseries}}
\renewcommand*{\subparagraph}{\@startsection{subparagraph}{5}{\z@}%
	{2.5ex \@plus 1ex \@minus 0.2ex}%
	{-1.5ex \@plus -0.2ex}%
	{\normalfont\normalsize\slshape}}
\newcommand{\N}{\mathbb N}
\newcommand{\incl}[3][right]%
{%
\draw[<-,>=#1 hook] #2 to ($ #2!0.5!#3 $);
\draw[->,>=stealth'] ($ #2!0.5!#3 $) to #3;%
}
\newcommand{\inclusion}[5][right]%
{%
\draw[<-,>=#1 hook] #4 to ($ #4!0.5!#5 $) node[#2,font=\small]{#3};
\draw[->,>=stealth'] ($ #4!0.5!#5 $) to #5;%
}
\newenvironment{itemizeb}%
{\begin{compactitem}

}%
{\end{compactitem}}
{\begin{compactitem}[#1]

}%
{\end{compactitem}}
{\begin{compactdesc}

}%
{\end{compactdesc}}
\newcommand{\cB}{\mathcal{B}}
\newcommand{\cD}{\mathcal{D}}
\newcommand{\cG}{\mathcal{G}}
\newcommand{\cL}{\mathcal{L}}
\newcommand{\cN}{\mathcal{N}}
\newcommand{\cU}{\mathcal{U}}
\newcommand{\cV}{\mathcal{V}}
\newcommand{\bC}{\mathbb{C}}
\newcommand{\bD}{\mathbb{D}}
\newcommand{\bG}{\mathbb{G}}
\newcommand{\bI}{\mathbb{I}}
\newcommand{\bR}{\mathbb{R}}
\newcommand{\bU}{\mathbb{U}}
\newcommand{\bV}{\mathbb{V}}
\newcommand{\bZ}{\mathbb{Z}}
\renewcommand{\geq}{\geqslant}
\renewcommand{\leq}{\leqslant}
\renewcommand{\footnoterule}{%
  \kern -3pt
  \hrule width \textwidth height 0.4pt
  \kern 2.6pt
}
\newcommand{\bouter}{\ensuremath{\partial_{\mathrm{out}}}}
\newcommand{\binner}{\ensuremath{\partial_{\mathrm{in}}}}
\newcommand{\lf}{\ensuremath{\mathit{lf}}}
\newcommand{\lfb}{\ensuremath{\mathit{lfb}}}
\newcommand{\too}{\ensuremath{\longrightarrow}}
\newcommand{\longhookrightarrow}{\ensuremath{\lhook\joinrel\longrightarrow}}
\newcommand{\minner}{\ensuremath{M_{\mathrm{in}}}}
\newcommand{\mouter}{\ensuremath{M_{\mathrm{out}}}}
\DeclareMathOperator{\B}{\textit{\textbf{B}}}
\newcommand{\pairing}{\ensuremath{\langle\;\mathbin{,}\;\rangle}}
\definecolor{dgreen}{RGB}{0,150,0}
\begin{document}
\title{\Large\bfseries Lawrence-Bigelow representations, bases and duality\vspace{-1ex}}
\author{\small Cristina Anghel and Martin Palmer\quad $/\!\!/$\quad \todaysdate\vspace{-1ex}}
\date{}
\maketitle
{
\makeatletter
\renewcommand*{\BHFN@OldMakefntext}{}
\makeatother
\footnotetext{2010 \textit{Mathematics Subject Classification}: 20C08, 20C12, 20F36, 55N25, 55R80, 57M10}
\footnotetext{\textit{Key words and phrases}: Braid groups, mapping class groups, homological representations, Lawrence-Bigelow representations, configuration spaces, homology with local coefficients.}
}
\begin{abstract}
We study homological representations of mapping class groups, including the braid groups. These arise from the twisted homology of certain configuration spaces, and come in many different flavours. Our goal is to give a unified general account of the fundamental relationships (non-degenerate pairings, embeddings, isomorphisms) between the many different flavours of homological representations. Our motivating examples are the \emph{Lawrence-Bigelow representations} of the braid groups, which are of central importance in the study of the braid groups themselves, as well as their connections with quantum invariants of knots and links.
\end{abstract}
\tableofcontents


\section{Introduction}\label{introduction}

Homological representations of the braid groups, and of mapping class groups of surfaces, play a central role in the study of these groups. They also have many applications, especially as a bridge between the topological world and the world of quantum invariants of knots and links. In particular, the \emph{Lawrence-Bigelow representations} are one of the most important families of representations of the braid groups, in connection with quantum invariants.

The world of quantum invariants started with the landmark discovery of the Jones polynomial~\cite{Jones1985}. Subsequently, Reshetikhin and Turaev~\cite{ReshetikhinTuraev1991} defined a construction that starts with a quantum group and provides link invariants. Two important families of these \emph{quantum invariants} are the coloured Jones polynomials and the coloured Alexander polynomials, which recover the original Jones and Alexander polynomials as special cases. At around the same time, in 1990, Lawrence \cite{Lawrence1990HomologicalrepresentationsHecke} constructed a family of homological representations of braid groups, on the homology of certain coverings of configuration spaces on the punctured disc. This discovery has led to many deep and interesting connections between quantum topology and the homology of configuration spaces.

More specifically, Lawrence~\cite{Lawrence1993functorialapproachone-variable} and later Bigelow~\cite{Bigelow2002homologicaldefinitionJones}, based on her work, described a topological model for the original Jones polynomial, in terms of intersections of homology classes in coverings of configuration spaces. Kohno~\cite{Kohno2012Homologicalrepresentationsbraid, Kohno2017} proved that the quantum representations of braid groups are isomorphic to Lawrence's homological representations. Ito~\cite{Ito2015, Ito2016}, and later Martel~\cite{Martel2020}, constructed homological models for the coloured Alexander polynomials and coloured Jones polynomials respectively, as sums of traces of homological representations.

Returning to the question concerning topological models, in \cite{Anghel2017, Anghel2019} the first author proved that the coloured Jones and Alexander polynomials can be seen in terms of intersections of homology classes in versions of Lawrence representations. These are existence-type results, whose proofs are based on the existence of non-degenerate intersection pairings between certain homologies of covering spaces. The results of the present paper, which are concluded in Corollary \ref{CorADO} and Remark \ref{RkADO}, play an important role in the proof of ~\cite{Anghel2019}. Recently, the first author~\cite{Anghel2020, Anghel2020a} has given explicit unified topological models for the coloured Jones and coloured Alexander polynomials, constructing explicit homology classes given by Lagrangian submanifolds in versions of Lawrence representations. The result presented in Corollary \ref{CorADO} is used in these two constructions. In particular, the topological model of \cite{Anghel2020} for the family of coloured Jones polynomials recovers the topological model of Lawrence and Bigelow for the original Jones polynomial.

The Lawrence-Bigelow representations are also important due to their role in the proof of the fact, due to Bigelow~\cite{Bigelow2001Braidgroupslinear} and Krammer~\cite{Krammer2002Braidgroupslinear}, that the braid groups $\B_n$ are \emph{linear}, i.e., they each act faithfully on a finite-dimensional vector space. Specifically, the family of Lawrence-Bigelow representations is indexed by an integer $m\geq 1$, where $m=1$ corresponds to the classical (reduced) Burau representations~\cite{Burau1935}. The next member of this family, corresponding to $m=2$, is known as the \emph{Lawrence-Krammer-Bigelow representation} of $\B_n$, and was proven by Bigelow and Krammer to be faithful, from which linearity of the braid groups follows immediately. Bigelow's proof, in particular, uses the topological nature of this representation in an essential way, making geometric arguments possible.

The Lawrence-Bigelow representations have been studied further by Paoluzzi-Paris~\cite{PaoluzziParis2002noteLawrenceKrammer} and by Bigelow~\cite{Bigelow2004HomologicalrepresentationsIwahori}, who used them to construct representations of Hecke algbras. There are also many other constructions of homological representations of braid groups (for example an iterative construction due to Long and Moody~\cite{Long1994Constructingrepresentationsbraid}) and of surface braid groups~\cite{AnKo2010, BellingeriGodelleGuaschi2017}. In \cite{PalmerSoulie2019}, Souli{\'e} and the second author have unified these into a general machine for constructing homological representations of mapping class groups or motion groups (e.g.~braid groups) of any manifold.

A key aspect of \emph{homological} representations is that they are defined \emph{topologically}, through the action of the braid group or mapping class group on the (twisted) homology of configuration spaces. This makes it possible to approach them using topological tools, which provide insights that would be impossible to obtain purely algebraically. (For example, as mentioned above, this was the case in Bigelow's proof of the linearity of the braid groups.) Two of the most useful topological tools that one would like to have are:
\begin{itemizeb}
\item \emph{non-degenerate pairings} between different homological representations;
\item (geometrically-defined) \emph{embeddings} between different homological representations.
\end{itemizeb}
This is especially important since homological representations typically come in many different -- and subtly related -- flavours, which one often needs to pass between. Our goal in this paper is to give a unified general account of these basic topological tools for homological representations of mapping class groups of orientable surfaces.

Given an orientable surface $\Sigma$ and a decomposition of $\partial\Sigma$ into two pieces, an integer $m\geq 1$ and a local system on the configuration space $C_m(\Sigma)$, there are several basic flavours of homological representations, depending on which part of $\partial\Sigma$ one uses and which kind of homology one considers, for example:
\begin{itemizeb}
\item locally-finite (Borel-Moore) homology,
\item homology relative to the boundary,
\item ordinary homology (not relative),
\item locally-finite homology of the covering space associated to the local system.
\end{itemizeb}

Let us denote these, qualitatively, by $H^{\lf}$, $H^\partial$, $H$ and $H^{\lf,\sim}$ respectively. Depending on which half of the boundary of $\Sigma$ we use, we then have representations
\[
H^{\bullet}(\minner) \qquad\text{and}\qquad H^{\bullet}(\mouter)
\]
of the mapping class group of $\Sigma$, where $\bullet$ denotes any of the four decorations above. The case of the braid groups corresponds to taking $\Sigma = \Sigma_{0,n+1}$, the surface of genus $0$ with $n+1$ boundary circles, partitioned into one ``outer'' boundary circle and $n$ ``inner'' boundary circles.

Our results about these eight flavours of homological representations are as follows. (Precise details are given in \S\ref{results}.)

\begin{thm}[Theorem \ref{thm-pairings} and Remark \ref{rmk-bases}]
There are non-degenerate pairings
\begin{align*}
H^{\lf}(\minner) \otimes H^\partial(\minner) &\too R \\
H^{\lf}(\mouter) \otimes H^\partial(\mouter) &\too R.
\end{align*}
Moreover, these representations are free as $R$-modules. We describe explicit bases such that the pairings above are given by the identity matrix.
\end{thm}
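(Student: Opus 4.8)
The plan is to realise both pairings as the twisted, relative Poincar\'e--Lefschetz intersection pairing on the relevant configuration space, and then to get freeness together with the identity-matrix normal form by producing explicit dual families of geometric cycles. First I would unwind the setup of \S\ref{results}: $\minner$ and $\mouter$ are oriented $2m$-manifolds --- configuration spaces of $m$ points on $\Sigma$ adapted to the inner, respectively the outer, half of $\partial\Sigma$ --- and $H^{\lf}$ and $H^\partial$ denote Borel--Moore homology and homology relative to the boundary, taken in the degree $m$ relevant for these representations, with coefficients in the given local system $\mathcal{L}$ on one side and its conjugate on the other. In that middle degree $H^{\lf}_m$ and $H^\partial_m$ are Poincar\'e--Lefschetz dual: cap product with the twisted fundamental class identifies $H^\partial_m$ with a cohomology group paired with $H^{\lf}_m$, so the pairing is non-degenerate, at least modulo torsion. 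Concretely it sends a locally-finite middle cycle and a compact relative middle cycle to their geometric intersection number, counted in $R$; I would check this is well defined by the usual argument --- transversality makes the two cycles meet in finitely many interior points, the complementary boundary and compactness conditions on the two sides kill the Stokes boundary terms so that the count descends to homology, and lifting to the covering space attached to $\mathcal{L}$ (equivalently, tracking holonomy) attaches to each transverse intersection point a well-defined element of $R$ --- so that the pairing is $R$-sesquilinear and $\Map(\Sigma)$-equivariant. The discussion for $\minner$ and for $\mouter$ is identical, the two being interchanged by swapping the halves of $\partial\Sigma$.

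Next I would produce the bases. In the braid-group case $\Sigma=\Sigma_{0,n+1}$ these should be the \emph{multifork} classes $F_{\mathbf a}$, indexed by $\mathbf a\in\N^n$ with $\sum_i a_i=m$ --- a fork running from the outer boundary to the $n$ inner punctures, carrying $a_i$ strands into the $i$-th puncture --- which are locally-finite $m$-cycles, together with the dual \emph{multinoodle} (``barcode'') classes $N_{\mathbf b}$, indexed the same way, which are compact $m$-cycles relative to the boundary; for a general orientable $\Sigma$ with the prescribed boundary decomposition one replaces the arcs-to-punctures by the analogous arcs and loops read off a spine of $\Sigma$, keeping the same combinatorial indexing set $I$. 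The points to verify are: (i) each $F_{\mathbf a}$ and $N_{\mathbf b}$ is closed in the appropriate homology, which is immediate from the construction; (ii) after isotoping the two families into minimal position, $\langle F_{\mathbf a},N_{\mathbf b}\rangle=0$ unless $\mathbf a=\mathbf b$, while $\langle F_{\mathbf a},N_{\mathbf a}\rangle$ is a single unit of $R$; and (iii) rescaling each $N_{\mathbf a}$ by the inverse of that unit turns the intersection matrix into the identity. From (ii)--(iii) the families $\{F_{\mathbf a}\}$ and $\{N_{\mathbf b}\}$ are $R$-linearly independent and span free submodules; to conclude that they are in fact bases --- which yields freeness of the whole modules and non-degeneracy of the pairing over $R$ --- I would invoke an independent computation showing that $H^{\lf}_m(\minner)$ and $H^\partial_m(\minner)$ are concentrated in degree $m$ and free of rank $|I|$, got from an explicit CW or handle model of the configuration space (or from the Palmer--Souli{\'e} machinery), and likewise for $\mouter$.

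The hard part will be step (ii), the ``noodle-and-fork'' count, now for an arbitrary orientable surface with a prescribed boundary decomposition and for \emph{both} the ``inner'' and ``outer'' flavours. One has to (a) choose the basis cycles carefully enough that the two families admit a simultaneous minimal position with no inessential intersections; (b) keep precise track of the holonomy monomial of $R$ contributed at each essential intersection point; and (c) rule out cancellation --- that is, show that the count pairing $F_{\mathbf a}$ with $N_{\mathbf a}$ is a single monomial rather than a sum that could collapse to a non-unit. A second, more routine obstacle is the concentration-in-degree-$m$ and freeness input used in the last step, which requires a sufficiently explicit model for the twisted chain complex of the configuration space.
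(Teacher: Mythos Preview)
Your outline matches the paper's strategy closely: the pairing is constructed via Poincar\'e duality and cap product exactly as you propose, explicit geometric cycles are produced, and a direct intersection count shows the pairing matrix is the identity. Two points of divergence are worth flagging.

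First, you have slightly misread the notation $H^\partial$. In the paper this is \emph{not} the full relative homology $H_m(\minner,\partial\minner)$ but is \emph{defined} to be the $R$-submodule spanned by the explicit relative classes $\{\cU_e\}$ (see \eqref{eq:bases}). Consequently the paper never needs your proposed ``independent computation'' that these classes generate the whole relative homology; it simply restricts the pairing to this span, and freeness of $H^\partial$ follows immediately once the pairing matrix is seen to be the identity. The only nontrivial spanning statement is on the locally-finite side, where a deformation retraction of $\Sigma$ onto a spine (Theorem~\ref{thm-lf-basis}) shows that the classes $\{\cD_f\}$ generate all of $H_m^{\lf}(\minner)$. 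Your plan would prove a stronger statement if it goes through, but the extra step --- controlling all of $H_m(\minner,\partial\minner)$ over a general ring $R$ --- is not needed and may be delicate.

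Second, the ``hard part'' (ii) you anticipate is in fact straightforward with the paper's choice of cycles. The green and red arcs in Figure~\ref{fig-basis-in} are drawn so that each green arc meets each red arc in at most one point. A pigeonhole count then forces $\bD_f \cap \bU_e = \varnothing$ unless $e=f$: an intersection point would impose $e_i \leq f_i$ componentwise, and since both partitions sum to $m$ this gives $e=f$. When $e=f$ the intersection is a single transverse point contributing $1 \in R$, so there is no cancellation to rule out and no rescaling is required (Lemma~\ref{L:1}).
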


For our next result, we assume the mild condition that the local system on the configuration space is \emph{$u$-homogeneous} for a unit $u \in R^\times$ (see Definition \ref{def-homogeneous}) and that the quantum factorials with respect to $u$ are non-zero-divisors in $R$.

\begin{thm}[Theorem \ref{thm-diagonal} and Corollary \ref{coro-embeddings}]
Under this mild assumption, there are embeddings of representations
\begin{align*}
H^\partial(\minner) &\too H^\lf(\mouter) \\
H^\partial(\mouter) &\too H^\lf(\minner).
\end{align*}
When $m>1$ this implies that $H^\lf(\mouter)$ and $H^\lf(\minner)$ are reducible. In fact, with respect to the explicit bases that we describe, the matrices of these embeddings are diagonal, and their diagonal entries are products of quantum factorials.
\end{thm}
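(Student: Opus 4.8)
The plan is to realise both embeddings as the adjoint of a single \emph{mixed intersection pairing} and then to compute its matrix against the explicit bases of Theorem~\ref{thm-pairings}. First I would record that there is a natural pairing $H^\partial(\minner)\otimes H^\partial(\mouter)\too R$: intersect a compact cycle whose boundary lies on the inner part of $\partial\Sigma$ against a compact cycle whose boundary lies on the outer part; away from the inner boundary the former is a genuine cycle and away from the outer boundary the latter is, so their intersection is a compact $0$-cycle and its local-coefficient-weighted count lies in $R$. This pairing is preserved by the mapping class group, being a homeomorphism-invariant intersection count. Its adjoint $H^\partial(\minner)\too\mathrm{Hom}_R(H^\partial(\mouter),R)$, composed with the inverse of the isomorphism $H^{\lf}(\mouter)\xrightarrow{\sim}\mathrm{Hom}_R(H^\partial(\mouter),R)$ induced by the (equivariant) non-degenerate pairing of Theorem~\ref{thm-pairings}, yields a map of representations $\iota\colon H^\partial(\minner)\too H^{\lf}(\mouter)$, which is the natural comparison map from relative to Borel--Moore homology. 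The symmetric construction, interchanging the two halves of $\partial\Sigma$, gives $H^\partial(\mouter)\too H^{\lf}(\minner)$, so it suffices to analyse $\iota$.

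Next I would pass to bases. Let $\{T_\lambda\}$, $\{S'_\mu\}$ and $\{T'_\mu\}$ be the explicit bases of $H^\partial(\minner)$, $H^{\lf}(\mouter)$ and $H^\partial(\mouter)$ from Remark~\ref{rmk-bases}, normalised so that $\langle S'_\nu,T'_\mu\rangle_{\mouter}=\delta_{\nu\mu}$, and indexed by the data $\lambda$ recording how the $m$ configuration points are distributed among the $n$ inner boundary components (in the braid-group case, a composition $\lambda=(\lambda_1,\dots,\lambda_n)$ of $m$). Writing $\iota(T_\lambda)=\sum_\mu a_{\lambda\mu}S'_\mu$ and pairing with $T'_\mu$ gives $a_{\lambda\mu}=\langle\iota(T_\lambda),T'_\mu\rangle_{\mouter}$, which by construction of $\iota$ is the mixed intersection number of the basis cycles $T_\lambda$ and $T'_\mu$. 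So the whole statement comes down to computing these numbers. One vanishes unless $\lambda=\mu$: a point of $T_\lambda\cap T'_\mu$ is a single configuration lying in both standard cycles, and by construction $T_\lambda$ forces exactly $\lambda_i$ of the points near the $i$-th inner component and $T'_\mu$ forces exactly $\mu_i$ there, so $\lambda=\mu$.

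For $\lambda=\mu$ I would localise. By construction the basis cycles are supported near the inner components, in disjoint discs, so a transversality perturbation presents $T_\lambda\cap T'_\mu$ as a disjoint union, over the components, of local intersections inside a once-punctured disc, and the weighted count factors as $\prod_{i=1}^{n}N_i$ with $N_i$ the count for the model of $\lambda_i$ points near a single puncture. In that model the two half-dimensional cycles meet transversely in $\lambda_i!$ points, one for each permutation $\sigma\in S_{\lambda_i}$ interleaving the two standard orderings of the points; since the local system is $u$-homogeneous (Definition~\ref{def-homogeneous}), interchanging two adjacent points near the puncture multiplies the local weight by $u$, so the $\sigma$-th point contributes $u^{\mathrm{inv}(\sigma)}$ up to signs and an overall power of $u$ (handled exactly as in the proof of Theorem~\ref{thm-pairings}), and hence $N_i=\sum_{\sigma\in S_{\lambda_i}}u^{\mathrm{inv}(\sigma)}=[\lambda_i]_u!$. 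Thus the matrix of $\iota$ in the bases $\{T_\lambda\}$, $\{S'_\mu\}$ is diagonal, with diagonal entries $\prod_{i=1}^{n}[\lambda_i]_u!$, products of quantum factorials. (As a check, when $m=1$ each part $\lambda_i$ is $0$ or $1$, every entry is $1$, and $\iota$ is an isomorphism --- consistent with no reducibility being claimed there.)

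The remaining deductions are then formal. The diagonal entries are non-zero-divisors by the standing assumption, so the diagonal matrix --- hence $\iota$, and symmetrically $H^\partial(\mouter)\too H^{\lf}(\minner)$ --- is injective: these are the embeddings of Corollary~\ref{coro-embeddings}. If $m>1$, some $\lambda$ has a part $\lambda_i\geq2$, so the corresponding entry has $[2]_u=1+u$ as a factor and is not a unit (e.g.\ when $R$ is a Laurent polynomial ring in the variables of the local system); hence $\iota$ is not surjective, its image is a proper nonzero subrepresentation of $H^{\lf}(\mouter)$, and likewise $H^{\lf}(\minner)$ is reducible. The genuine content, and the step I expect to be the main obstacle, is the local computation above: it requires the explicit geometric representatives of the basis classes carried over from the proof of Theorem~\ref{thm-pairings}, a transversality-and-excision argument localising the intersection one puncture at a time, and --- the delicate part --- a careful reconciliation of orientation signs with the local-coefficient weights so that the symmetrisation sum over $S_{\lambda_i}$ collapses to exactly a quantum factorial; this last point is precisely where the $u$-homogeneity hypothesis is used essentially, whereas everything else is routine or formal.
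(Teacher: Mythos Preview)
Your proposal is correct and runs along the same lines as the paper's proof; the core content---vanishing of the off-diagonal intersections by a pigeonhole argument on how the $m$ points are distributed among the arcs, and the local computation near a single arc yielding $\sum_{\sigma}u^{\mathrm{inv}(\sigma)}=[\lambda_i]_u!$ via $u$-homogeneity---is exactly what the paper does.

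The one genuine difference is in how the map itself is produced. You build $\iota$ indirectly, as the adjoint of a mixed pairing $H^\partial(\minner)\otimes H^\partial(\mouter)\to R$ followed by the inverse of the duality isomorphism $H^{\lf}(\mouter)\xrightarrow{\sim}\mathrm{Hom}_R(H^\partial(\mouter),R)$ coming from Theorem~\ref{thm-pairings}. The paper instead constructs $\iota_{\mathrm{in}}$ directly at the level of spaces, as the composite
\[
(\minner,\partial\minner)\hookrightarrow (M,\partial\minner)\to (M/\partial\minner,\star)\to (\mouter^\star,\infty),
\]
the last arrow landing in the one-point compactification, and then takes induced maps on homology. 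This makes $\Gamma(\Sigma,\bouter)$-equivariance immediate (the maps of spaces are visibly equivariant), whereas in your approach equivariance has to be deduced from invariance of both pairings---which works, but is less transparent. After that the two arguments converge: the paper computes $\langle\iota_{\mathrm{in}}(\cU_e),\cG_f\rangle$ using the non-degenerate pairing on $\mouter$, which is precisely your mixed intersection number. One small indexing correction: in the general surface setting the bases are indexed by tuples of length $n-1+k+2g$ (one entry per arc in Figure~\ref{fig-basis-in}), not $n$; even in the braid-group case (Example~\ref{eg:1}) this is $n-1$, not $n$.
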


Our next result is a mild generalisation of \cite[Theorem 3.1]{Kohno2017}, using the notion of \emph{genericity} of a local system (Definition \ref{def-generic}). This is a stronger assumption than that of being $u$-homogeneous, although one may always force a local system to become generic by taking a fibrewise tensor product; see Remark \ref{rmk:generic} for details.

\begin{thm}[Theorem \ref{thm-genericity}]
If the local system is generic, there are isomorphisms
\[
H(\minner) \;\cong\; H^\lf(\minner) \qquad\text{and}\qquad H(\mouter) \;\cong\; H^\lf(\mouter).
\]
\end{thm}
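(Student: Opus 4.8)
The plan is to construct the evident comparison morphism from ordinary to locally-finite homology, factor it through a map already analysed in the paper, and reduce what remains to a single acyclicity statement that genericity is designed to provide. Throughout I work in the compact manifold-with-corners model $\mbar$ of the configuration space used elsewhere in the paper, whose boundary faces come in three kinds: those where a configuration point lies on $\binner$, those where one lies on $\bouter$, and the (blown-up) collision faces. Write $\partial^{\mathrm{out}}$ and $\partial^{\mathrm{coll}}$ for the unions of the last two kinds. Under the identifications used throughout, $H(\minner)\cong H_*(\mbar)$, $H^\partial(\mouter)\cong H_*(\mbar,\partial^{\mathrm{out}})$ and $H^\lf(\minner)\cong H_*(\mbar,\partial^{\mathrm{out}}\cup\partial^{\mathrm{coll}})$, and the natural map $H(\minner)\too H^\lf(\minner)$ (inclusion of finite into locally-finite chains) is the composite
\[
H_*(\mbar)\xrightarrow{\ \psi\ }H_*(\mbar,\partial^{\mathrm{out}})\xrightarrow{\ \iota\ }H_*(\mbar,\partial^{\mathrm{out}}\cup\partial^{\mathrm{coll}})
\]
of maps of pairs.

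The second factor $\iota$ is, up to these identifications, precisely the embedding $H^\partial(\mouter)\too H^\lf(\minner)$ of Corollary~\ref{coro-embeddings}; by that corollary its matrix in the distinguished bases is diagonal with diagonal entries that are products of quantum factorials, and genericity ensures those entries are invertible, so $\iota$ is an isomorphism. It therefore suffices to prove that $\psi$ is an isomorphism, and by the long exact sequence of the pair $(\mbar,\partial^{\mathrm{out}})$ this is equivalent to the vanishing of $H_*(\partial^{\mathrm{out}};\mathcal L|)$ for the restricted local system. The statement for $\mouter$ follows verbatim after interchanging $\binner$ and $\bouter$. All maps above are maps of $\Map(\Sigma)$-representations by naturality, so the resulting isomorphisms are isomorphisms of representations.

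To prove $H_*(\partial^{\mathrm{out}};\mathcal L|)=0$, filter $\partial^{\mathrm{out}}$ by the number $k\geq 1$ of configuration points lying on the circle $\bouter$. The open stratum with exactly $k$ such points is a fibre bundle over a configuration space of $m-k$ points in the interior of $\Sigma$, with fibre the unordered configuration space of $k$ points on $\bouter\cong S^1$, which is homotopy equivalent to $S^1$. The restriction of $\mathcal L$ to this circle fibre has monodromy the image under the local system of the loop sliding these $k$ points once around $\bouter$; since $\bouter$ is freely isotopic in $\Sigma$ to a product of loops encircling the other boundary components, that element is among those that genericity forces to differ from $1$ by a unit, so $H_*(S^1;\mathcal L|)=0$ (and likewise $H^{\mathrm{BM}}_*(S^1;\mathcal L|)=0$). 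Hence the fibre, and therefore by the Leray--Serre spectral sequence each stratum, has vanishing local-system homology and Borel--Moore homology. An induction downwards through the filtration, using the long exact sequences of the closed pairs formed by consecutive filtration stages --- whose relative terms are the Borel--Moore homologies of the strata just shown to vanish --- then yields $H_*(\partial^{\mathrm{out}};\mathcal L|)=0$.

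The only essential difficulty, and the sole place where genericity is used beyond invertibility of the quantum factorials, is this last acyclicity: one must pin down the monodromy of $\mathcal L$ along each circle fibre precisely enough to recognise it as one of the ``generic'' elements of Definition~\ref{def-generic}, and then carry out the stratification bookkeeping --- including the orientation and local-coefficient twists carried by the blown-up model --- needed to upgrade fibrewise vanishing to vanishing on the whole $\bouter$-locus. Everything else is a formal manipulation of long exact sequences together with Corollary~\ref{coro-embeddings}.
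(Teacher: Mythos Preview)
Your overall strategy --- compactify, then show the ``boundary at infinity'' has vanishing twisted homology --- is close in spirit to the paper's proof (which follows Kohno and uses a normal-crossing compactification and the vanishing of $H^*(\bC^*;\cL_i)$ around each divisor). However, there is a genuine gap in the step where you invoke Corollary~\ref{coro-embeddings}.

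The issue is your identification $H^\partial(\mouter)\cong H_*(\mbar,\partial^{\mathrm{out}})$. In the paper, $H_m^\partial(\mouter)$ is \emph{defined} as the submodule of $H_m(\mouter,\partial\mouter)$ spanned by the classes $\cG_f$; it is not shown to equal the whole relative homology group. What $H_*(\mbar,\partial^{\mathrm{out}})$ naturally computes (after pushing off the $\partial^{\mathrm{in}}$ and $\partial^{\mathrm{coll}}$ collars) is $H_m(\mouter,\partial\mouter)$, not the submodule $H_m^\partial(\mouter)$. Corollary~\ref{coro-embeddings} only tells you that the restriction of $\iota$ to this submodule is diagonal with quantum-factorial entries; even granting that genericity makes those entries units, you obtain that the restriction is an isomorphism onto $H_m^{\lf}(\minner)$, hence that $\iota$ is \emph{surjective} --- but not that it is injective on all of $H_*(\mbar,\partial^{\mathrm{out}})$. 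For that you would need $H_m^\partial(\mouter)=H_m(\mouter,\partial\mouter)$, which is nowhere established.

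The fix is to bypass Corollary~\ref{coro-embeddings} entirely and argue for $\iota$ in the same way you argue for $\psi$: by the long exact sequence of the triple, $\iota$ is an isomorphism iff $H_*(\partial^{\mathrm{out}}\cup\partial^{\mathrm{coll}},\partial^{\mathrm{out}};\cL|)=0$, which by excision reduces to acyclicity of the collision strata. A stratification argument on $\partial^{\mathrm{coll}}$, entirely parallel to your argument on $\partial^{\mathrm{out}}$ (the normal circle to a collision has monodromy a power of $u$, and genericity makes $1-u^k$ a unit), handles this. Done this way your proof becomes a direct topological analogue of Kohno's: both amount to showing that every boundary face of the compactification --- whether arising from $\bouter$ or from collisions --- carries a local system whose normal-circle monodromy $m_\gamma$ satisfies $1-m_\gamma\in R^\times$.

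Two smaller points: the paper does not in fact introduce a manifold-with-corners model $\mbar$, so your phrase ``used elsewhere in the paper'' is inaccurate; and $\bouter$ is not assumed to be a single circle (under Assumption~\ref{assumption-boundary} it is contained in one boundary component, but may be a union of intervals), so the fibre in your stratification of $\partial^{\mathrm{out}}$ is not literally $S^1$ in general.
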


We next investigate the relation between $H^\lf$ and $H^{\lf,\sim}$. When working with ordinary homology, Shapiro's lemma implies that the homology of $X$ with coefficients in a local system arising from a covering $\widehat{X} \to X$ is isomorphic to the untwisted homology of the covering $\widehat{X}$ itself. However, for locally-finite (Borel-Moore) homology this is no longer true (see sections \ref{Shapiro} and \ref{Shapiro-lf} for a more detailed discussion).

\begin{thm}[Theorem \ref{thm-coverings}]
The natural maps
\begin{align*}
H^\lf(\minner) &\too H^{\lf,\sim}(\minner) \\
H^\lf(\mouter) &\too H^{\lf,\sim}(\mouter)
\end{align*}
are injective. Let ${}_\bullet$ denote either ${}_{\mathrm{in}}$ or ${}_{\mathrm{out}}$. If $\cB$ denotes a free basis for $H^\lf(M_\bullet)$ as a module over $R = k[G]$, then $H^{\lf,\sim}(M_\bullet)$ is a direct sum over $\cB$ of copies of the completion $k[[G]]$ of $k[G]$.
\end{thm}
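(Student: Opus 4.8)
The plan is to reduce both sides to a finite chain-level model and then combine two standard facts: a quasi-isomorphism between bounded complexes of finitely generated free $R$-modules is a chain homotopy equivalence, and $R=k[G]$ embeds into its completion $k[[G]]=\prod_{g\in G}k$. To set up the model, recall from the proof of Theorem~\ref{thm-pairings} and Remark~\ref{rmk-bases} that $H^{\lf}(M_\bullet)$ is the homology of a bounded complex $C_*$ of finitely generated free $R$-modules --- one generator per handle of a handle decomposition of (a suitable compactification of) the configuration space --- and that this homology is free, with basis $\cB$, concentrated in a single degree $d$.

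Next I would describe the covering side. Passing to the regular $G$-cover $\widehat{M_\bullet}$, the handle decomposition lifts, with one handle over each $g\in G$ for every handle of $M_\bullet$; and because $M_\bullet$ is modelled by a \emph{finite} complex, $\widehat{M_\bullet}$ is a \emph{locally finite} CW pair, so its locally-finite (Borel--Moore) homology is computed by locally-finite cellular chains --- where a locally-finite chain is an \emph{arbitrary} $k$-linear combination of the lifts of a given handle. Hence the locally-finite cellular chain complex of $\widehat{M_\bullet}$ is canonically $C_*\otimes_R k[[G]]$ (see \S\ref{Shapiro-lf}), and under this identification the natural map $H^{\lf}(M_\bullet)\too H^{\lf,\sim}(M_\bullet)$ is induced by the chain-level inclusion $C_*=C_*\otimes_R R\longhookrightarrow C_*\otimes_R k[[G]]$ coming from $R\hookrightarrow k[[G]]$. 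Establishing this last paragraph carefully --- in particular that the locally-finite chains of the cover genuinely form the full product $C_*\otimes_R k[[G]]$ rather than merely $C_*$ --- is where I expect the real work to be: this is precisely where Borel--Moore homology diverges from ordinary homology, Shapiro's lemma holding for the latter but not the former. Everything after it is formal.

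Granting it, the proof concludes as follows. Lifting the basis $\cB$ to a family of cycles in $C_*$ --- possible since $H_d(C_*)$ is free --- defines a chain map $\iota\colon H^{\lf}(M_\bullet)[d]\too C_*$ from the complex with $H^{\lf}(M_\bullet)$ placed in degree $d$; it is a quasi-isomorphism of bounded complexes of finitely generated free $R$-modules, hence (its mapping cone being a bounded acyclic complex of finitely generated free $R$-modules, therefore split exact and contractible) a chain homotopy equivalence. Applying the additive functor $-\otimes_R k[[G]]$, which preserves chain homotopy equivalences, and using the identification of the previous paragraph,
\[
H^{\lf,\sim}(M_\bullet)\;\cong\;H^{\lf}(M_\bullet)\otimes_R k[[G]]\;\cong\;\bigoplus_{\cB}k[[G]],
\]
the last isomorphism because $H^{\lf}(M_\bullet)$ is free on $\cB$. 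Chasing the natural map through these identifications, it becomes the map $H^{\lf}(M_\bullet)=H^{\lf}(M_\bullet)\otimes_R R\too H^{\lf}(M_\bullet)\otimes_R k[[G]]$ induced by $R\hookrightarrow k[[G]]$ (a nonzero finitely supported function on $G$ is nonzero), which is injective since $H^{\lf}(M_\bullet)$ is free. The argument is identical for ${}_{\mathrm{in}}$ and ${}_{\mathrm{out}}$. As a consistency check, note that when $G$ is infinite this natural map, though injective, is very far from surjective, since then $k[[G]]\neq k[G]$ --- which is exactly the expected failure of Shapiro's lemma for Borel--Moore homology.
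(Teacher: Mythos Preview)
Your approach is correct in outline and genuinely different from the paper's. The paper argues geometrically: it identifies $H_*^{\lf}(\mblob)$ with an inverse limit $\varprojlim_\epsilon H_*(M,M_\epsilon)$, deformation-retracts onto the subspace $M_{\mathrm{green}}$ (a finite disjoint union of open $m$-balls), and then repeats the same deformation/excision argument upstairs in $(\mblob)^\varphi$ --- the new ingredient being an explicit cofinal family of compact subsets of the cover built from translates of a fundamental domain (to handle non-compactness in the fibre direction). Your route instead packages the base computation into a bounded complex $C_*$ of finitely generated free $R$-modules with free homology, observes that a quasi-isomorphism between such complexes is a chain homotopy equivalence, and transports everything to the cover via the additive functor $-\otimes_R k[[G]]$. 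This is cleaner and more portable: it would apply verbatim to any space whose twisted Borel--Moore homology happens to be free and computed by a finite free complex.

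Two small points. First, your citation is off: the freeness and concentration-in-one-degree of $H_*^{\lf}(\mblob;\cL)$ are established in the proof of Theorem~\ref{thm-lf-basis} (the first half of Theorem~\ref{thm-coverings} itself), not in Theorem~\ref{thm-pairings} or Remark~\ref{rmk-bases}. Second, the step you flag as ``where the real work is'' --- identifying the locally-finite cellular chains of the cover with $C_*\otimes_R k[[G]]$ --- is exactly right, and it is essentially the same work the paper does with its fundamental-domain argument, just expressed in a different language. To make your version precise you would take a finite CW pair $(\bar{M},\partial')$ with $\bar{M}\smallsetminus\partial' = \mblob$, note that $H_*^{\lf}((\mblob)^\varphi)$ is computed by the \emph{relative} locally-finite cellular chains of the covering pair $((\bar{M})^\varphi,(\partial')^\varphi)$, and then observe that for a locally finite CW complex the locally-finite cellular chain module is the full product over cells --- giving $\prod_{\text{cells}\notin\partial'}\prod_G k = \bigoplus_{\text{cells}\notin\partial'} k[[G]] = C_*\otimes_R k[[G]]$ since the base has finitely many cells. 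The paper's cofinal-compacta construction is one way to justify that first identification; alternatively one can appeal to the standard long exact sequence for Borel--Moore homology of a closed/open pair.
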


See Definition \ref{def:completion} for the definition of $k[[G]]$.

Finally, in \S\ref{formula} we use the injections of Theorem \ref{thm-coverings} and Shapiro's lemma to re-interpret the non-degenerate pairings of Theorem \ref{thm-pairings} and the embeddings of Theorem \ref{thm-diagonal} in terms of the homology (locally-finite and relative to the boundary) of covering spaces of configuration spaces. Under this interpretation, we give an explicit geometric formula \eqref{eq:geometric-formula} for the non-degenerate pairings.

{\bf Acknowledgments.} C.~Anghel acknowledges the support of the European Research Council (ERC) under the European Union's Horizon 2020 research and innovation programme (grant agreement No 674978).

\section{Results}\label{results}

\begin{defn}
Let $(\Sigma;\binner,\bouter)$ be a \emph{surface triad}, by which we mean a compact, connected, orientable surface $\Sigma$ equipped with a decomposition $\partial \Sigma = \binner \cup \bouter$ into non-empty $1$-dimensional submanifolds $\binner , \bouter \subseteq \partial \Sigma$ such that $\partial \binner = \partial \bouter = \binner \cap \bouter$. We then consider the following unordered configuration spaces:
\begin{align*}
\minner &= C_m(\mathrm{int}(\Sigma) \cup \mathrm{int}(\binner)) = C_m(\Sigma \smallsetminus \bouter) \\
\mouter &= C_m(\mathrm{int}(\Sigma) \cup \mathrm{int}(\bouter)) = C_m(\Sigma \smallsetminus \binner).
\end{align*}
\end{defn}

\begin{rmk}
Note that $\minner$ and $\mouter$ are topological $2m$-manifolds with boundary. A configuration of $\minner$ lies in $\partial\minner$ exactly when at least one configuration point lies in $\binner$, and similarly a configuration of $\mouter$ lies in $\partial\mouter$ exactly when at least one configuration point lies in $\bouter$.
\end{rmk}

\begin{defn}
Let $\mathrm{Diff}(\Sigma,\bouter)$ be the topological group of diffeomorphisms of $\Sigma$ that restrict to the identity on a neighbourhood of $\bouter \subseteq \partial \Sigma$. Its discrete group of path-components is the mapping class group
\[
\Gamma(\Sigma,\bouter) = \pi_0(\mathrm{Diff}(\Sigma,\bouter)).
\]
Note that this group is isomorphic to the mapping class group of $\Sigma \smallsetminus \binner$ fixing its entire boundary $\partial(\Sigma \smallsetminus \binner) = \mathrm{int}(\bouter)$ pointwise.

Any diffeomorphism $\psi \in \mathrm{Diff}(\Sigma;\bouter)$ preserves the decomposition $\partial\Sigma = \binner \cup \bouter$, so there is a natural continuous action of $\mathrm{Diff}(\Sigma;\bouter)$ on $\minner$ and on $\mouter$.
\end{defn}

\begin{eg}\label{eg:1}
In our first motivating example we have $\Sigma = \Sigma_{0,n+1}$, the closed $2$-disc after removing $n\geq 2$ open subdiscs with pairwise disjoint closures, and
\begin{itemizeb}
\item $\bouter = \partial \bD^2$ is the outer boundary circle (the boundary of the $2$-disc);
\item $\binner$ is the union of the $n$ inner boundary circles (the boundaries of the $n$ open discs that we have removed).
\end{itemizeb}
In this case, the mapping class group $\Gamma(\Sigma_{0,n+1},\partial \bD^2)$ is naturally isomorphic to the braid group $\B_n$ on $n$ strands.
\end{eg}

\begin{eg}\label{eg:2}
Our second motivating example is a small, but non-trivial, variation of our first example. We again take $\Sigma = \Sigma_{0,n+1}$ for $n\geq 2$, but now
\begin{itemizeb}
\item $\bouter = \bI$ is a closed interval in the outer boundary circle;
\item $\binner$ is the complementary closed interval in the outer boundary circle, together with all $n$ inner boundary circles.
\end{itemizeb}
For diffeomorphisms of surfaces, the condition of fixing a boundary circle is equivalent -- up to isotopy -- to the condition of fixing a single point, or fixing an interval, in that boundary circle. As a result, the mapping class group $\Gamma(\Sigma_{0,n+1} , \bI)$ is again naturally isomorphic to the braid group $\B_n$ on $n$ strands.
\end{eg}

\begin{rmk}\label{rmk-local-systems}
Write $M = C_m(\Sigma)$. Then the two inclusions
\[
\minner \longrightarrow M \longleftarrow \mouter
\]
are both homotopy equivalences. Thus, local systems on $\minner$ are in one-to-one correspondence with local systems on $\mouter$ and also in one-to-one correspondence with local systems on $M$.
\end{rmk}

\begin{defn}
Recall that a \emph{rank-1 local system} on a space $X$ is a bundle of $R$-modules over $X$ such that the fibre over each point of $X$ is isomorphic to $R$ as a rank-$1$ free module over itself.\footnote{We assume that $X$ is locally path-connected and semi-locally simply-connected (which will always be the case in our examples, where $X$ is always a manifold).} Equivalently, when $X$ is based and path-connected, it may be thought of as an action of $\pi_1(X)$ on $R$ by $R$-module automorphisms, in other words a homomorphism $\pi_1(X) \to GL_1(R) = R^\times$.
\end{defn}

Let us now fix a unital ring $R$ equipped with an anti-automorphism $\alpha \colon R \to R$ with $\alpha^2 = \mathrm{id}$ and a rank-1 local system $\cL$ on $M$ defined over $R$. We make the following assumptions:

\begin{assumption}\label{assumption}
The local system $\cL$ is preserved by the action of $\mathrm{Diff}(\Sigma;\bouter)$ on $M$. In other words, for each $\psi \in \mathrm{Diff}(\Sigma;\bouter)$ we have $\psi^*(\cL) \cong \cL$. Moreover, we require that the homomorphism $\pi_1(X) \to R^\times$ is compatible with the anti-automorphism $\alpha$ restricted to $R^\times$ and the canonical anti-automorphism of the group $\pi_1(X)$ given by $g \mapsto g^{-1}$.
\end{assumption}

\begin{eg}\label{eg:local-system}
In Examples \ref{eg:1} and \ref{eg:2}, we have $M = C_m(\Sigma_{0,n+1})$, so there are homomorphisms
\[
\varphi \colon \pi_1(M) \too \B_{m,n} \too \begin{cases}
\bZ^2 = \langle c , x \rangle & \text{if } m = 1; \\
\bZ^3 = \langle c , x , d \rangle & \text{if } m \geq 2.
\end{cases}
\]
The first homomorphism is given by filling in each of the $n$ inner boundary-components of $\Sigma_{0,n+1}$ with a disc containing one new configuration point, to obtain a configuration of $m+n$ points in $\bD^2$, partitioned as $\{m,n\}$. The second homomorphism is the abelianisation of $\B_{m,n}$. The generator $c$ interchanges two of the last $n$ points; $d$ interchanges two of the first $m$ points and $x$ sends the $m$-th point once around the $(m+1)$st point. The image of the composite homomorphism $\varphi$ is the subgroup $\bZ = \langle x \rangle$ for $m=1$ and $\bZ^2 = \langle x , d \rangle$ for $m \geq 2$. This quotient $\varphi$ of $\pi_1(M)$ therefore determines a regular covering
\[
M^\varphi \too M
\]
with deck transformation group $G = \bZ = \langle x \rangle$ when $m=1$ and $G = \bZ^2 = \langle x , d \rangle$ when $m \geq 2$. Then taking free abelian groups fibrewise, we obtain a rank-1 local system
\begin{equation}
\label{eq:rank-1-system}
\bZ[M^\varphi] \too M
\end{equation}
defined over the group-ring $\bZ[G]$. One may easily verify that the quotient map $\varphi \colon \pi_1(M) \to G$ is \emph{invariant} under the induced action of $\mathrm{Diff}(\Sigma;\bouter)$ on $\pi_1(M)$. This implies that Assumption \ref{assumption} is satisfied for this local system.
\end{eg}

\begin{eg}\label{eg:tensor-product}
In Example \ref{eg:local-system} we described a rank-1 local system \eqref{eq:rank-1-system} over $\bZ[G]$. For any other unital ring $R$ and $(\bZ[G],R)$-bimodule $V$, we may take the fibrewise tensor product
\begin{equation}
\label{eq:rank-1-system-tensor}
\bZ[M^\varphi] \otimes_{\bZ[G]} V \too M,
\end{equation}
which is then a rank-1 local system over $R$. For example, suppose we have a homomorphism
\[
\theta \colon G \too \bC^\times
\]
(since $G$ is either $\bZ$ or $\bZ^2$ this amounts to a choice of either one or two non-zero complex numbers $\theta(x)$ and $\theta(d)$). This extends by linearity to a ring homomorphism, also denoted $\theta \colon \bZ[G] \to \bC$. We may then take $R = \bC$ and $V = \bC$ considered as a right module over itself (by right-multiplication) and as a left module over $\bZ[G]$ via $\theta$. In this case, \eqref{eq:rank-1-system-tensor} is a rank-1 local system
\begin{equation}
\label{eq:rank-1-system-C}
\bZ[M^\varphi] \otimes_{\theta} \bC \too M
\end{equation}
over $\bC$. These local systems inherit from \eqref{eq:rank-1-system} the property that they satisfy Assumption \ref{assumption}.
\end{eg}

\begin{notation}
\emph{Homology groups of $\minner$ and of $\mouter$ -- ordinary or locally finite (Borel-Moore) -- will always be taken with coefficients in the local system $\cL$ unless otherwise specified.}
\end{notation}

\begin{observation}
The first part of Assumption \ref{assumption} implies that the four $R$-modules
\begin{equation}
\label{eq:representations}
H_m^{\lf}(\minner), \qquad H_m(\minner;\partial \minner), \qquad H_m^{\lf}(\mouter), \qquad H_m(\mouter,\partial \mouter)
\end{equation}
are representations of $\Gamma(\Sigma;\bouter)$. In the setting of Example \ref{eg:1}, together with the local system described in Example \ref{eg:local-system}, these are (different versions of) the $m$-th \emph{Lawrence-Bigelow representations} of the braid groups.
\end{observation}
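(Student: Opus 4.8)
The plan is to realise the four modules in \eqref{eq:representations} as the values of a homotopy-invariant functor on which $\mathrm{Diff}(\Sigma;\bouter)$ acts by functoriality, and then to pass to path-components. Recall that twisted homology is functorial for pairs: a map $f\colon(X,A)\to(Y,B)$ together with an isomorphism of local systems $f^*\cL_Y\xrightarrow{\sim}\cL_X$ induces $f_*\colon H_*(X,A;\cL_X)\to H_*(Y,B;\cL_Y)$, compatibly with composition of such enriched maps; and the same holds for locally-finite (Borel--Moore) homology once $f$ is in addition required to be proper.

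First I would check that every $\psi\in\mathrm{Diff}(\Sigma;\bouter)$ gives such an enriched self-map of $(\minner,\partial\minner)$ and of $(\mouter,\partial\mouter)$. Indeed $\psi$ preserves the decomposition $\partial\Sigma=\binner\cup\bouter$, hence restricts to homeomorphisms of $\minner$ and of $\mouter$ carrying $\partial\minner$ to $\partial\minner$ and $\partial\mouter$ to $\partial\mouter$ (a configuration lies in the boundary exactly when one of its points lies in $\binner$, resp.\ $\bouter$); being homeomorphisms these maps are proper, so they act on both the relative and the locally-finite homology. The first part of Assumption \ref{assumption} supplies the missing ingredient, namely an isomorphism $\psi^*\cL\cong\cL$ on $M$, which pulls back along the homotopy equivalences $\minner\to M\leftarrow\mouter$ of Remark \ref{rmk-local-systems}. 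Thus each $\psi$ acts on all four modules of \eqref{eq:representations}. Finally, a path in $\mathrm{Diff}(\Sigma;\bouter)$ induces an isotopy of $\minner$ (resp.\ $\mouter$) through maps of pairs, hence a proper homotopy; since ordinary, relative and locally-finite homology are all invariant under (proper) homotopy, homotopic diffeomorphisms induce the same map, so the action descends to $\Gamma(\Sigma;\bouter)=\pi_0\mathrm{Diff}(\Sigma;\bouter)$.

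The one point requiring care — and the main obstacle — is that $\mathrm{Aut}(\cL)\cong R^\times$ for the connected space $M$, so the bare datum ``$\psi^*\cL\cong\cL$'' makes $\psi\mapsto\psi_*$ well defined only up to a unit, i.e.\ a priori yields merely a projective representation. To rigidify it I would exploit that each $\psi$ is the identity on a fixed neighbourhood of $\bouter$: choosing a basepoint of $M=C_m(\Sigma)$ inside that neighbourhood, every $\psi$ fixes it together with the germ of $M$ there, hence acts honestly on $\pi_1(M,\ast)$, and one checks that the coefficient isomorphisms can then be normalised coherently (equivalently, in all of our examples the monodromy homomorphism $\pi_1(M,\ast)\to R^\times$ is literally invariant under this action, as recorded for Example \ref{eg:local-system}), so that one obtains a genuine action of the group $\mathrm{Diff}(\Sigma;\bouter)$, and then of $\Gamma(\Sigma;\bouter)$, on each module. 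The closing assertion of the Observation is then a comparison of definitions: for $\Sigma=\Sigma_{0,n+1}$ with the local system of Example \ref{eg:local-system}, the four modules are precisely the configuration-space homology groups (over $\bZ[x^{\pm1}]$ when $m=1$, over $\bZ[x^{\pm1},d^{\pm1}]$ when $m\geq2$) on which Lawrence and Bigelow define their representations, and the actions agree under the identification $\Gamma(\Sigma_{0,n+1},\partial\bD^2)\cong\B_n$ of Example \ref{eg:1}.
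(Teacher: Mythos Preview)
The paper records this as an \emph{observation} and gives no proof, so there is nothing in the paper to compare against. Your argument is correct and in fact more careful than what the paper supplies: you rightly note that the bare hypothesis $\psi^*\cL\cong\cL$ determines $\psi_*$ only up to a unit of $R^\times$, so that a priori one obtains only a projective representation, and you explain how to rigidify this using a basepoint in the fixed neighbourhood of $\bouter$ (equivalently, by the invariance of the monodromy homomorphism noted in Example~\ref{eg:local-system}). The rest of your argument --- functoriality of twisted homology for maps of pairs, properness of homeomorphisms for Borel--Moore homology, and descent to $\pi_0$ via (proper) homotopy invariance --- is the standard verification and is correct.
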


\begin{athm}\label{thm-pairings}
There is a $\Gamma(\Sigma;\bouter)$-invariant pairing
\[
H_m^{\lf}(\minner) \otimes H_m(\minner;\partial \minner) \too R
\]
that restricts to a \textbf{non-degenerate} $\Gamma(\Sigma;\bouter)$-invariant pairing
\begin{equation}
\label{eq:non-degenerate-pairing}
H_m^{\lf}(\minner) \otimes H_m^\partial(\minner) \too R,
\end{equation}
where $H_m^\partial(\minner)$ is a sub-representation of $H_m(\minner;\partial \minner)$ that we describe explicitly. The same statements hold when $\minner$ is replaced with $\mouter$.
\end{athm}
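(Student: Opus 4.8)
The plan is to construct the pairing geometrically as an intersection pairing between configuration-space cycles, and then cut down the target of the relative homology to a summand on which it becomes non-degenerate. First I would recall the standard construction: since $\minner$ is a $2m$-manifold with boundary, Poincaré–Lefschetz duality (with the twisted coefficients $\cL$, whose dual local system is identified with $\cL$ itself via the anti-automorphism $\alpha$, using Assumption \ref{assumption}) gives a perfect pairing between $H_m^{\lf}(\minner)$ and $H_m(\minner;\partial\minner)$ at the level of chain complexes, realised geometrically by transverse intersection of a locally-finite $m$-cycle with a relative $m$-cycle, weighted by the local-system monodromy along a path joining basepoints. The $\Gamma(\Sigma;\bouter)$-invariance is then automatic from naturality of duality and the first part of Assumption \ref{assumption} (diffeomorphisms act compatibly on both factors and preserve $\cL$).

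The substance is the \emph{non-degeneracy} after restricting the second factor to the explicit submodule $H_m^\partial(\minner)$. The configuration space $\minner = C_m(\Sigma\smallsetminus\bouter)$ deformation retracts onto a configuration space on a graph (a ``spine'' of $\Sigma\smallsetminus\bouter$), and one has an explicit handle/CW decomposition in which the $m$-cells are indexed by multisets of edges of the graph (equivalently, by the standard multi-arc classes familiar from the braid-group case). I would show that $H_m^{\lf}(\minner)$ is free on the locally-finite classes dual to these cells, and that $H_m^\partial(\minner)$ — which I would \emph{define} as the submodule of $H_m(\minner;\partial\minner)$ spanned by the corresponding compactly-supported ``relative'' multi-arc classes (those whose endpoints lie on $\binner$, the retracted boundary) — is likewise free on exactly the matching set of generators. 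With these two bases in hand, the intersection pairing is computed cell-by-cell: each locally-finite dual cell meets the corresponding relative cell transversally in a single point and meets no other, so the Gram matrix is (after an overall normalisation of basepoint paths) the identity. Non-degeneracy is then immediate, and this also matches the claim in Remark \ref{rmk-bases} that the pairing is the identity matrix in the stated bases.

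For the $\mouter$ case I would either repeat the argument verbatim with the roles of $\binner$ and $\bouter$ swapped, or — more economically — observe that $(\Sigma;\bouter,\binner)$ is again a surface triad, so the $\mouter$ statement is a literal instance of the $\minner$ statement for the triad with its two boundary pieces interchanged; the only point to check is that Assumption \ref{assumption} is symmetric in $\binner$ and $\bouter$, which it is, since $M=C_m(\Sigma)$ and the diffeomorphism group $\mathrm{Diff}(\Sigma;\bouter)$ containing $\mathrm{Diff}(\Sigma;\partial\Sigma)$ acts on both sides, and the homotopy equivalences of Remark \ref{rmk-local-systems} identify all the local systems involved.

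The main obstacle I anticipate is making the ``explicit submodule'' $H_m^\partial(\minner)$ precise and proving it is free with the claimed basis: the full relative homology $H_m(\minner;\partial\minner)$ is in general larger (it can contain classes supported near several boundary strata, and — when $m>1$ — extra classes detected by the embeddings of Theorem \ref{thm-diagonal}), so I must pin down $H_m^\partial$ intrinsically (e.g. as the image of the map from compactly-supported homology, or as the span of the geometric multi-arc classes) and verify both that it is a $\Gamma(\Sigma;\bouter)$-subrepresentation and that the intersection pairing is perfect on it. Establishing freeness of both modules on the nose will likely go through a Mayer–Vietoris or handle-by-handle spectral-sequence argument on the graph model, controlling the differentials using the local system; this bookkeeping, rather than any single deep idea, is where the real work lies.
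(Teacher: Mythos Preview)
Your proposal is correct and follows essentially the same architecture as the paper: construct the pairing via Poincar{\'e}--Lefschetz duality plus cap product, exhibit explicit arc-based classes on both sides indexed by partitions, and compute the Gram matrix to be the identity by a combinatorial intersection argument. Two points of difference are worth noting.

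First, the ``main obstacle'' you anticipate largely evaporates in the paper's treatment: $H_m^\partial(\minner)$ is simply \emph{defined} to be the $R$-span of the explicit relative classes $\cU_e$, with no attempt at an intrinsic characterisation. Linear independence of the $\cU_e$ (and hence freeness of $H_m^\partial$) then falls out of the pairing computation $\langle \cD_f,\cU_e\rangle=\delta_{e,f}$ itself, so no Mayer--Vietoris or spectral-sequence bookkeeping is needed on that side. Second, the genuine work is on the locally-finite side, showing that the classes $\cD_f$ \emph{span} $H_m^{\lf}(\minner)$ (linear independence again comes from the pairing). Here the paper does not use a spectral sequence but a direct deformation-retraction argument: one retracts $\Sigma$ onto the union of $\bouter$ and the arc system, and shows that the induced map on locally-finite homology of configuration spaces is an isomorphism by writing $H^{\lf}_*$ as an inverse limit of relative homology groups and checking the retraction respects the filtration by $\epsilon$-neighbourhoods of the diagonal and of $\bouter$. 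This is close in spirit to your ``spine'' idea, but more hands-on than a spectral sequence and avoids any differential analysis. Your approach would also work, but the paper's route is shorter once one has the right cofinal family of compact subsets.
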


\begin{rmk}\label{rmk-bases}
We also describe explicit free bases for the four $R$-modules
\begin{equation}\label{four-modules}
H_m^{\lf}(\minner), \qquad H_m^\partial(\minner), \qquad H_m^{\lf}(\mouter), \qquad H_m^\partial(\mouter);
\end{equation}
see \S\ref{pairings}, with respect to which the pairings \eqref{eq:non-degenerate-pairing} of Theorem \ref{thm-pairings} are given by identity matrices.
\end{rmk}

\begin{defn}\label{def-homogeneous}
Say that the local system $\cL$ is \emph{homogeneous} if, whenever $\gamma$ is a loop of configurations in which all points remain fixed except two, which swap places while staying within a small subdisc of $\Sigma$, the monodromy around $\gamma$ is a fixed element of $R^\times$. In other words, viewing the local system as a homomorphism $\pi_1(M) \to R^\times$, all loops $\gamma$ of the form described above are mapped to a fixed unit in $R^\times$. If we denote this unit by $u \in R^\times$, we say that $\cL$ is \emph{$u$-homogeneous}.

Of course, this definition is vacuous if $m=1$, since there are no loops $\gamma$ as described; in this case we take the convention that every local system is $1$-homogeneous.
\end{defn}

\begin{eg}\label{eg:homogeneous}
The trivial local system over $R$ is $1$-homogeneous. In Example \ref{eg:local-system} we have
\[
R = \bZ[G] = \begin{cases}
\bZ[x^{\pm 1}] & \text{if } m=1; \\
\bZ[x^{\pm 1},d^{\pm 1}] & \text{if } m \geq 2,
\end{cases}
\]
and the local system \eqref{eq:rank-1-system} is $d$-homogeneous (or $1$-homogeneous when $m=1$). In Example \ref{eg:tensor-product}, the local system \eqref{eq:rank-1-system-C} defined over $\bC$ is $\theta(d)$-homogeneous.
\end{eg}

\begin{defn}
For a unital ring $R$, a unit $u \in R^\times$ and an integer $n \geq 1$, the \emph{quantum integers} and \emph{quantum factorials} are defined as follows:
\begin{align*}
[n]_u &= 1+u+u^2 + \cdots + u^{n-1} \in R \\
[n]_u! &= [1]_u \cdot [2]_u \cdots [n-1]_u \cdot [n]_u \in R.
\end{align*}
For example, if $u=1$ this is the classical factorial $n!$.
\end{defn}

\begin{athm}\label{thm-diagonal}
There are $\Gamma(\Sigma;\bouter)$-equivariant $R$-module homomorphisms
\begin{equation}\label{injections}
\begin{aligned}
H_m^\partial(\minner) &\too H_m^{\lf}(\mouter) \\
H_m^\partial(\mouter) &\too H_m^{\lf}(\minner),
\end{aligned}
\end{equation}
whose matrices, with respect to our explicit bases, are diagonal. If the local system is $u$-homogeneous, the diagonal entries of these matrices are all products of quantum factorials $[r]_u!$.
\end{athm}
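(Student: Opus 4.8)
The plan is to construct the maps in \eqref{injections} geometrically and then compute their matrices entry-by-entry against the explicit bases that are fixed in Remark \ref{rmk-bases}. First I would recall that the bases for $H_m^\partial(\minner)$ and $H_m^{\lf}(\mouter)$ (and the other pair) are indexed by the same combinatorial data — multi-indices recording how many of the $m$ configuration points sit in each ``cell'' of a handle/arc decomposition of $\Sigma$ adapted to the partition $\partial\Sigma = \binner \cup \bouter$ — so that there is a canonical bijection between the two bases. Each basis class is represented by a product of ``standard'' submanifolds: for the relative-homology basis of $H_m^\partial(\minner)$ these are (closures of) small arcs/discs anchored on $\binner$, while for the locally-finite basis of $H_m^{\lf}(\mouter)$ the corresponding class is a non-compact submanifold running off to $\bouter$. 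The map \eqref{injections} is the natural one: a relative cycle in $(\minner,\partial\minner)$ pushes forward, under the inclusion $\minner \hookrightarrow M$ followed by a homotopy that ``opens up'' $\binner$, to a locally-finite cycle in $\mouter$. Equivariance under $\Gamma(\Sigma;\bouter)$ is automatic because the construction only involves the inclusions of Remark \ref{rmk-local-systems} and a collar, all of which are canonical up to the mapping class group action; so the first real task is just to check well-definedness of this pushforward on homology, which follows from a standard long-exact-sequence / excision argument with the local coefficients $\cL$.

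The matrix being diagonal is then a locality statement: the image of the $i$-th relative basis class, when expanded in the locally-finite basis of $\mouter$, is supported on the single index $i$. I would prove this by a direct geometric intersection computation, pairing the pushed-forward class against the dual basis of $H_m(\mouter,\partial\mouter)$ provided by Theorem \ref{thm-pairings}: because the standard submanifolds representing basis class $i$ on the $\minner$ side and dual class $j$ on the $\mouter$ side are built from arcs lying in disjoint cells whenever $i \neq j$, the geometric intersection number (computed with $\cL$-coefficients, as in the formula \eqref{eq:geometric-formula}) vanishes off the diagonal. The diagonal entry is the self-intersection contribution of a single standard cell, localised to a configuration of points swapping within one small subdisc, together with the $\cL$-monodromy picked up along the way.

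Finally, to identify the diagonal entries as products of quantum factorials $[r]_u!$ when $\cL$ is $u$-homogeneous: a diagonal cell with $r$ configuration points contributes a factor that counts the $r!$ ways the points can be permuted while being pushed from the $\binner$-anchored model to the $\bouter$-anchored model, but now weighted by the monodromy $u$ incurred for each transposition of two points inside a small subdisc (Definition \ref{def-homogeneous}). This is exactly the well-known $u$-deformation of the factorial: summing $u^{\ell(\sigma)}$ over $\sigma \in \mathfrak{S}_r$ (with $\ell$ the length/inversion statistic) gives $[r]_u!$. The overall diagonal entry is then the product of such $[r]_u!$ over the cells, with $r$ ranging over the multiplicities recorded by the index. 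I expect the main obstacle to be the bookkeeping in this last step: one must set up the arc/cell model and the orientation and basepoint conventions carefully enough that the monodromy exponents really assemble into the inversion statistic on $\mathfrak{S}_r$ (rather than, say, an off-by-one shift or an overall unit ambiguity), and one must check that the ``opening up'' homotopy does not introduce spurious cancellation between cells — i.e.\ that the support of the pushforward is genuinely a single point in the index set, which is where a clean choice of the adapted handle decomposition is essential.
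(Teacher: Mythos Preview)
Your proposal is correct and follows essentially the same route as the paper. Two small points of comparison: (1) the paper makes the ``opening up'' step rigorous not via a homotopy but via the composite $(\minner,\partial\minner)\hookrightarrow(M,\partial\minner)\to(M/\partial\minner,\star)\to(\mouter^{\star},\infty)$, i.e.\ quotient followed by collapse to the one-point compactification, which gives the map $H_m^\partial(\minner)\to H_m^{\lf}(\mouter)$ directly and makes equivariance immediate; (2) your identification of the diagonal entry via $\sum_{\sigma\in\mathfrak{S}_r} u^{\ell(\sigma)} = [r]_u!$ is exactly the content of the paper's local ``inductive'' computation of $\langle \tilde{\cU}_e,\cG_e\rangle$, just stated more explicitly.
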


This immediately implies:

\begin{acoro}\label{coro-embeddings}
If the local system $\cL$ is $u$-homogeneous and the quantum factorials $[r]_u!$ are all non-zero-divisors in $R$, the homomorphisms \eqref{injections} are embeddings of $\Gamma(\Sigma;\bouter)$-representations over $R$. In particular, the $\Gamma(\Sigma;\bouter)$-representations $H_m^{\lf}(M)$ and $H_m^{\lf}(M')$ are \textbf{reducible} for $m>1$.
\end{acoro}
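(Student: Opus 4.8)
The plan is to derive the corollary formally from Theorem~\ref{thm-diagonal} together with Remark~\ref{rmk-bases}. First I would record the elementary linear-algebra input: if $\phi\colon F \to F'$ is a homomorphism of free $R$-modules whose matrix, with respect to bases $(e_i)_i$ of $F$ and $(f_j)_j$ of $F'$, is diagonal with diagonal entries $d_i$, then $\phi\bigl(\sum_i a_i e_i\bigr) = \sum_i a_i d_i f_i$, so $\phi$ is injective provided every $d_i$ is a non-zero-divisor in $R$ (and a finite product of non-zero-divisors is again a non-zero-divisor). I would then apply this to the two homomorphisms of \eqref{injections}: by Theorem~\ref{thm-diagonal} they are $\Gamma(\Sigma;\bouter)$-equivariant and, with respect to the explicit bases of Remark~\ref{rmk-bases}, have diagonal matrices whose diagonal entries are products of quantum factorials $[r]_u!$ (here using $u$-homogeneity of $\cL$). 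Since every $[r]_u!$ is a non-zero-divisor by hypothesis, so is each diagonal entry, and hence both maps in \eqref{injections} are injective. Being moreover $\Gamma(\Sigma;\bouter)$-equivariant $R$-module homomorphisms, they are embeddings of representations, which is the first assertion.

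For the reducibility statement, fix $m > 1$ and let $N$ denote the image of the embedding $H_m^\partial(\minner) \hookrightarrow H_m^{\lf}(\mouter)$. As this map is $R$-linear and $\Gamma(\Sigma;\bouter)$-equivariant, $N$ is a $\Gamma(\Sigma;\bouter)$-invariant $R$-submodule of $H_m^{\lf}(\mouter)$; it is non-zero because $H_m^\partial(\minner)$ is a non-zero free $R$-module and the map is injective. It remains to show that $N$ is a \emph{proper} submodule, and this is where the hypothesis $m > 1$ is essential: for $m > 1$ the diagonal matrix of \eqref{injections} is no longer the identity, and from the explicit description in \S\ref{pairings} one reads off that at least one diagonal entry $d_i$ is a non-trivial product of quantum factorials, hence divisible by $[2]_u = 1 + u$, and therefore a non-unit of $R$ (for instance over $R = \bZ[G]$, and more generally whenever $1+u$ is not invertible). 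The corresponding basis vector $f_i$ of $H_m^{\lf}(\mouter)$ then does not lie in the span $N$ of the vectors $d_i f_i$, so $N \subsetneq H_m^{\lf}(\mouter)$ and $H_m^{\lf}(\mouter)$ is reducible. Exchanging the roles of $\minner$ and $\mouter$ and using the second map of \eqref{injections} shows that $H_m^{\lf}(\minner)$ is reducible as well.

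The only genuine work, beyond invoking Theorem~\ref{thm-diagonal}, is the properness claim in the second paragraph: one must check that for $m > 1$ at least one diagonal entry of \eqref{injections} really is a non-unit. I expect this to be the main obstacle, and it reduces to inspecting the explicit bases and diagonal entries constructed in \S\ref{pairings} --- identifying which products $\prod_k [r_k]_u!$ occur and verifying that some such product is non-trivial (so that $1+u$ divides it), with the non-zero-divisor hypothesis ensuring the entry is non-zero and non-invertibility of $1+u$ in the ground ring ensuring it is not a unit. Once this bookkeeping is done, both parts of the corollary follow immediately, as indicated in the text.
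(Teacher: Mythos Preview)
Your derivation of injectivity is precisely what the paper means by ``This immediately implies'': Theorem~\ref{thm-diagonal} gives a diagonal matrix (in the bases of Remark~\ref{rmk-bases}) whose entries are products of quantum factorials, and a product of non-zero-divisors is a non-zero-divisor, so the map between free $R$-modules is injective. This part is correct and identical in approach to the paper.

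For the reducibility clause you go further than the paper, which gives no separate argument. Your analysis is sound but also exposes a genuine gap in the \emph{statement} rather than in your proof: the hypotheses of the corollary (that each $[r]_u!$ be a non-zero-divisor) do not by themselves force any diagonal entry to be a non-unit. For instance if $R$ is a field of characteristic zero and $u=1$, or more generally whenever every $[r]_u!$ with $r\leq m$ happens to be invertible, the diagonal map is an isomorphism and its image is all of $H_m^{\lf}(\mouter)$, so no properness follows. Your proposed remedy --- to add the mild hypothesis that $1+u$ (equivalently some $[r]_u!$ with $2\leq r\leq m$) is a non-unit, which certainly holds in the motivating example $R=\bZ[x^{\pm 1},d^{\pm 1}]$ with $u=d$ --- is exactly what is needed, and once it is in place your argument (that the basis vector $f_i$ corresponding to a non-unit entry $d_i$ cannot lie in the span of the $d_j f_j$) is correct. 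In short: your proof of the first assertion matches the paper, and your treatment of the second assertion is more careful than the paper's and correctly identifies an implicit extra hypothesis.
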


\begin{rmk}
The condition on quantum factorials is satisfied whenever $R$ is an integral domain and $u \in R^\times$ has infinite order. This holds in our key examples of local systems $\cL$ (see Example \ref{eg:homogeneous}), as long as, in the case of Example \ref{eg:tensor-product}, the complex number $\theta(d) \in \bC^*$ is not a root of unity.
\end{rmk}

\begin{defn}\label{def-generic}
The local system $\cL$ is called \emph{generic for $\minner$} if it satisfies the following condition. Let $\gamma$ be an unbased loop in $\minner$ that may be homotoped to be disjoint from any given compact subset. Then the monodromy $m_\gamma \in R^\times$ of $\cL$ around $\gamma$ (for any choice of basepoint on $\gamma$) has the property that $1 - m_\gamma$ is also a unit of $R$. The property of being \emph{generic for $\mouter$} is similar.
\end{defn}

\begin{aprop}\label{thm-genericity}
If $\cL$ is generic for $\minner$ then the natural morphism of $\Gamma(\Sigma;\bouter)$-representations
\[
H_m(\minner) \too H_m^{\lf}(\minner)
\]
is an isomorphism. Similarly with $\minner$ replaced by $\mouter$ if $\cL$ is generic for $\mouter$.
\end{aprop}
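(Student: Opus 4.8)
The plan is to prove that the \emph{natural} map $\nu\colon H_m(\minner;\cL)\to H_m^{\lf}(\minner;\cL)$ (inclusion of finite chains into locally finite chains) is an isomorphism; $\Gamma(\Sigma;\bouter)$-equivariance is then automatic, since $\nu$ is natural with respect to the self-diffeomorphisms of $\minner$ appearing in Assumption~\ref{assumption}. The underlying mechanism is the classical one: genericity of $\cL$ forces the homology of a neighbourhood of infinity of $\minner$ to vanish, and this collapses the comparison between ordinary and Borel--Moore homology. The argument for $\mouter$ is identical, exchanging $\binner$ and $\bouter$.

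Concretely, fix a compact exhaustion $\minner=\bigcup_{i\geq 1}K_i$ with $K_i$ in the relative interior of $K_{i+1}$, and set $N_i:=\minner\smallsetminus K_i$. Ordinary homology commutes with this filtered colimit, while Borel--Moore homology is computed by the standard $\varprojlim$--$\varprojlim^1$ short exact sequence $0\to {\varprojlim}^{1}_{i} H_{*+1}(\minner,N_i;\cL)\to H_*^{\lf}(\minner;\cL)\to \varprojlim_i H_*(\minner,N_i;\cL)\to 0$, and $\nu$ is induced by the maps $H_*(\minner;\cL)\to H_*(\minner,N_i;\cL)$. Hence it suffices to show that $H_*(N_i;\cL)=0$ for all sufficiently large $i$: the long exact sequence of the pair $(\minner,N_i)$ then gives isomorphisms $H_*(\minner;\cL)\xrightarrow{\ \cong\ }H_*(\minner,N_i;\cL)$ for $i\gg 0$ compatible with the transition maps, so the pro-system $\{H_*(\minner,N_i;\cL)\}_i$ is pro-isomorphic to the constant system $H_*(\minner;\cL)$, the $\varprojlim^1$-term vanishes, and $\nu$ is an isomorphism.

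The substantive step is therefore: \emph{a sufficiently small neighbourhood of infinity $N$ of $\minner=C_m(\Sigma\smallsetminus\bouter)$ has $H_*(N;\cL)=0$.} Here I would describe the ends of $\minner$ by degeneration type --- a configuration escapes to infinity exactly when some points coalesce into clusters and/or some points run off towards $\bouter$ --- and exhibit $N$ as a finite union of pieces $N_S$ indexed by this combinatorial data and closed under intersection (an intersection $N_S\cap N_{S'}$ corresponds to the ``combined'' degeneration). The point is that each $N_S$, and each multiple intersection, retracts onto a configuration space of strictly smaller total complexity, fibred over it with fibre a product of circles: each newly formed cluster contributes a small ``swap'' circle (monodromy the swap unit, equal to $u$ when $\cL$ is $u$-homogeneous) and each escape towards a circle component of $\bouter$ contributes a peripheral circle. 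Genericity says precisely that every such loop $\gamma$ --- which is by construction homotopable off any compact set --- has $1-m_\gamma\in R^\times$, whence $H_*(S^1;\cL_{m_\gamma})=0$; feeding this into the Leray--Serre spectral sequences of these circle-bundles kills $H_*$ of every $N_S$ and of every intersection, and a Mayer--Vietoris induction over the finite cover then yields $H_*(N;\cL)=0$.

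The main obstacle is exactly this second step: turning ``neighbourhood of infinity of a configuration space of a non-compact surface'' into a tractable object, i.e.\ producing the stratification by degeneration type (dealing with nested collisions and with simultaneous collision-and-escape) and verifying that each stratum and each multiple intersection genuinely retracts onto a circle-bundle over a configuration space of smaller complexity, so that the spectral-sequence/Mayer--Vietoris bookkeeping terminates. A subsidiary point is to check that the $\cL$-monodromy around each ``end circle'' is literally one of the loops governed by Definition~\ref{def-generic} (a small swap for a collision, a peripheral loop for an escape towards $\bouter$), so that the genericity hypothesis applies without further ado.
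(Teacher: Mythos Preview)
Your strategy is sound and your reduction to ``$H_*(N;\cL)=0$ for a neighbourhood of infinity'' is exactly the right mechanism, but the route is genuinely different from the paper's. The paper does not build the stratification-by-degeneration-type directly; instead it observes that $C_m(\mathrm{int}(\Sigma))$ is a complex affine variety, compactifies it to a projective variety $X$ with a normal crossing boundary divisor $D=D_1\cup\cdots\cup D_r$, arranges that passing from $\mathring{M}$ to $\minner$ corresponds to adding back some of the $D_i$, and then quotes Kohno's argument \cite[Theorem~3.1]{Kohno2017} verbatim. The only thing left to check is Kohno's hypothesis (5), which boils down to $H^q(\bC^*;\cL_i)=0$ for the rank-$1$ local system on $\bC^*$ with monodromy $m_{\gamma_i}$ around the normal circle $\gamma_i$ of $D_i$; this is the one-line cellular computation $R/(1-m_{\gamma_i})=0$, valid precisely because $\gamma_i$ can be pushed off any compact set and genericity gives $1-m_{\gamma_i}\in R^\times$.

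So the paper buys the hard combinatorics (your ``main obstacle'') wholesale by invoking the existence of a normal crossing compactification and Kohno's machinery; your approach reconstructs that combinatorics by hand via the Mayer--Vietoris cover $\{N_S\}$. Both land on the identical core computation $H_*(S^1;\cL_\gamma)=0$. One point to tighten in your sketch: for a cluster of $k\geq 3$ colliding points the link of the big diagonal in $C_k(\bR^2)$ is \emph{not} a torus, so ``fibre a product of circles'' is not literally correct at that stage; what makes this work is an iterated peeling-off of one collision at a time (equivalently, a Fulton--MacPherson-style resolution), after which each individual step really is an $S^1$-bundle with monodromy a swap loop. You essentially flagged this under ``nested collisions'', but it is the place where your argument needs the most care, and it is precisely the work that the normal-crossing-divisor formulation in the paper's approach packages for free.
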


\begin{rmk}
This fact is essentially due to Kohno \cite[Theorem 3.1]{Kohno2017}; we just explain how his proof generalises to allow local systems defined over any unital ring $R$ rather than just the complex numbers $\bC$.
\end{rmk}

\begin{rmk}\label{rmk:generic}
Our key examples of local systems are sometimes generic and sometimes not. In particular, the local system \eqref{eq:rank-1-system} is \emph{not} generic, since the elements $1-x$ and $1-d$ are not invertible in the Laurent polynomial ring in the variables $x$ and $d$.

However, taking the fibrewise tensor product as in Example \ref{eg:tensor-product} can force a local system to be generic. For example, we could take $R$ to be the localisation of the Laurent polynomial ring with respect to the multiplicative subset generated by $\{ 1-x , 1-d \}$, then take $V=R$ as a right-module over itself and as a left-module over the Laurent polynomial ring. The resulting local system \eqref{eq:rank-1-system-tensor} is generic.

Alternatively, the local system \eqref{eq:rank-1-system-C} defined over $\bC$ is generic whenever the homomorphism $\theta$ sends $x$ and $d$ to elements of $\bC \smallsetminus \{0,1\}$.
\end{rmk}

\begin{assumption}
We now assume that the rank-1 local system $\cL$ arises from a regular covering. More precisely, let $\varphi \colon \pi_1(M) \to G$ be a surjective homomorphism, defining a regular covering $M^\varphi \to M$ with deck transformation group $G$. Choose a ring $k$ and take free $k$-modules fibrewise to define a bundle $k[M^\varphi] \to M$ of $k[G]$-modules. This is a rank-1 local system defined over the group-ring $R = k[G]$. \emph{We assume that $\cL$ is of this form}.
\end{assumption}

There is a natural $R$-module homomorphism
\begin{equation}\label{base-to-covering}
H_m^{\lf}(\minner;\cL) \too H_m^{\lf}((\minner)^\varphi;k),
\end{equation}
where $(\minner)^\varphi$ denotes the restriction of the covering $M^\varphi \to M$ to $\minner \subseteq M$, and the homology group on the right-hand side is taken with trivial $k$-coefficients.

\begin{rmk}
If we were taking \emph{ordinary} homology, rather than locally finite (Borel-Moore) homology, this homomorphism would be an isomorphism, by Shapiro's lemma for covering spaces. However, the homomorphism \eqref{base-to-covering} is in general not an isomorphism if the covering space $M^\varphi \to M$ is infinite-sheeted. This is discussed in more detail in sections \ref{Shapiro} and \ref{Shapiro-lf}.
\end{rmk}

\begin{defn}\label{def:completion}
Define $k[[G]]$ to be the $k[G]$-module $k^G$ of functions $G \to k$, in other words the product of $\lvert G \rvert$ copies of $k$. This contains the group-ring $k[G]$:
\begin{equation}\label{completed-group-ring}
k[G] = \bigoplus_{g \in G} k \longhookrightarrow \prod_{g \in G} k = k[[G]].
\end{equation}
\end{defn}

\begin{athm}\label{thm-coverings}
Let $\cB_{\mathrm{in}}$ be the free basis over $R = k[G]$ for $H_m^{\lf}(\minner;\cL)$ mentioned in Remark \ref{rmk-bases}. Then the $R$-module $H_m^{\lf}((\minner)^\varphi;k)$ is isomorphic to a direct sum of copies of $k[[G]]$ indexed by the set $\cB_{\mathrm{in}}$. Under this identification, the homomorphism \eqref{base-to-covering} is given by $\cB_{\mathrm{in}}$ copies of the natural inclusion \eqref{completed-group-ring}. In other words:
\[
H_m^{\lf}(M;\cL) \cong \bigoplus_\cB k[G] \longhookrightarrow \bigoplus_\cB k[[G]] \cong H_m^{\lf}(M^\varphi;k).
\]
An identical statement also holds when $\minner$ is replaced with $\mouter$, and $\cB_{\mathrm{in}}$ is replaced by $\cB_{\mathrm{out}}$.
\end{athm}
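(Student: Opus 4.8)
The plan is to reduce the statement, for each of $\minner$ and $\mouter$, to a purely homological-algebraic fact about a finite free chain model, the key external input being the freeness established in \S\ref{pairings}. First I would fix a handle decomposition of $\minner$ with finitely many handles --- such a model exists because $\minner$ is the interior of a compact manifold with corners, via a suitable compactification of the configuration space, and one may use the decomposition underlying the construction of the basis $\cB_{\mathrm{in}}$ in \S\ref{pairings} --- and let $C_*$ be the associated cellular chain complex with coefficients in $\cL$. This is a bounded complex of finitely generated free $R = k[G]$-modules, chain homotopy equivalent to $C_*^{\lf}(\minner;\cL)$. The covering $(\minner)^\varphi \to \minner$ pulls this back to a $G$-equivariant handle decomposition of $(\minner)^\varphi$. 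Here is the point where locally-finite homology differs from ordinary homology, and where the completion $k[[G]]$ enters: the locally-finite cellular chain complex of the cover is the direct \emph{product} over $G$ of $C_*$, not the direct sum; and since $C_*$ is finitely generated in each degree, this product is canonically $k[[G]] \otimes_{k[G]} C_*$. Thus $C_*^{\lf}((\minner)^\varphi;k) \simeq k[[G]] \otimes_{k[G]} C_*$, and --- both equivalences coming from the same decomposition --- the natural map \eqref{base-to-covering} is identified with the map on homology induced by the canonical inclusion $C_* = k[G]\otimes_{k[G]} C_* \hookrightarrow k[[G]]\otimes_{k[G]} C_*$.

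Next I would feed in the input from \S\ref{pairings} that $H_*(C_*) = H_*^{\lf}(\minner;\cL)$ is a free $R$-module in every degree (in fact concentrated in degree $m$, with free basis $\cB_{\mathrm{in}}$). A bounded complex of finitely generated projective modules whose homology is projective in every degree is chain homotopy equivalent to the direct sum of its homology modules, placed in the appropriate degrees with zero differentials; applied to $C_*$ this gives $C_* \simeq \bigoplus_i H_i^{\lf}(\minner;\cL)[i]$. Since the additive functor $k[[G]]\otimes_{k[G]}(-)$ preserves chain homotopy equivalences,
\[
C_*^{\lf}((\minner)^\varphi;k) \;\simeq\; k[[G]]\otimes_{k[G]} C_* \;\simeq\; \bigoplus_i \bigl(k[[G]]\otimes_{k[G]} H_i^{\lf}(\minner;\cL)\bigr)[i].
\]
Taking homology in degree $m$ yields $H_m^{\lf}((\minner)^\varphi;k) \cong k[[G]]\otimes_{k[G]} H_m^{\lf}(\minner;\cL) \cong \bigoplus_{\cB_{\mathrm{in}}} k[[G]]$, and unwinding the identifications of the previous paragraph shows that \eqref{base-to-covering} becomes $\cB_{\mathrm{in}}$ copies of the inclusion \eqref{completed-group-ring}. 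The statement for $\mouter$ is proved identically, with $\cB_{\mathrm{in}}$ replaced by $\cB_{\mathrm{out}}$.

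I expect the main obstacle to be the first step: making precise that the locally-finite cellular chain complex of the infinite-sheeted cover is the $G$-fold direct product of the base complex, that this is compatible with the singular locally-finite chain complex, and that the resulting map is exactly \eqref{base-to-covering}. This requires both a careful finite-type model for $\minner$ and a comparison of cellular with singular locally-finite homology for infinite covers; the discussion in \S\ref{Shapiro-lf} should supply what is needed. Finally, it is worth stressing that the freeness hypothesis on $H_*^{\lf}(\minner;\cL)$ is genuinely used: already for $S^1$ with its $\bZ$-cover, $H_0(S^1;k[\bZ]) = k$ is not a free $k[\bZ]$-module and the analogue of the theorem fails, so Theorem \ref{thm-coverings} really does depend on the explicit geometric bases constructed in \S\ref{pairings}.
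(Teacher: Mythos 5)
Your strategy is sound and is genuinely different from the one in the paper. The paper (Theorems \ref{thm-lf-basis} and \ref{thm-lf-basis-covering}) argues geometrically: it exhibits the explicit subspace $M_{\mathrm{green}}\subseteq\minner$ of configurations supported on the green arcs, which is a disjoint union of open $m$-balls indexed by $\cB_{\mathrm{in}}$, observes that its preimage in the cover is a disjoint union of balls indexed by $\cB_{\mathrm{in}}\times G$ (whence $\bigoplus_{\cB_{\mathrm{in}}}\prod_G k$, since locally-finite homology turns disjoint unions into products), and then shows the inclusion into $(\minner)^\varphi$ is an isomorphism on $H_m^{\lf}$ by writing locally-finite homology as an inverse limit over a cofinal family of compact subsets and lifting excision and deformation-retraction arguments from the base. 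The novelty in the cover is that the neighbourhood basis of infinity must be enlarged from $\{M_\epsilon^\varphi\}$ to $\{L_S\cup M_\epsilon^\varphi\}$ with $S\subseteq G$ finite, to account for the non-compactness in the fibre direction. Your argument replaces all of this by homological algebra: a bounded finite free model $C_*$ for $C_*^{\lf}(\minner;\cL)$, the identification of the locally-finite chains of the cover with $k[[G]]\otimes_{k[G]}C_*$, and the splitting of a bounded complex of projectives with projective homology as the sum of its homology. That splitting argument is correct, and it does genuinely use that $H_*^{\lf}(\minner;\cL)$ is projective (indeed zero) in every degree, not just free in degree $m$ -- your $S^1$ example correctly locates where this fails in general, and is consistent: the derived completion $k[[\bZ]]\otimes_{k[\bZ]}(k[\bZ]\xrightarrow{1-t}k[\bZ])$ has $H_1=k$, matching $H_1^{\lf}(\bR;k)$. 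What your approach buys is generality and transparency about exactly which inputs are needed; what the paper's approach buys is that it never needs a finite CW or handle model of the end-compactification at all.

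The one substantive gap -- which you yourself flag -- is the first step, and I would not underestimate it. A finite handle decomposition of the open manifold $\minner$ computes ordinary, not locally-finite, homology; to compute $H_*^{\lf}$ cellularly you need a finite CW structure on a compactification relative to its ends (collisions and escape to $\bouter$), compatible with the covering, together with the comparison of infinite cellular chains with singular locally-finite chains for the (locally finite but infinite) lifted CW structure, and the identification of the resulting inclusion of chain complexes with the map \eqref{base-to-covering} of Definition \ref{def-base-to-covering}, which is defined on singular chains. None of this is false, but it is essentially the same quantum of work that the paper's proof performs via the cofinal families $M\smallsetminus M_\epsilon$ and $M^\varphi\smallsetminus(L_S\cup M_\epsilon^\varphi)$; note also that the concentration of $H_*^{\lf}(\minner;\cL)$ in degree $m$, which your splitting step requires, is itself established only inside the proof of Theorem \ref{thm-lf-basis}, so your argument cannot be made independent of that first half of the theorem. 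With these points supplied, your proof would be complete.
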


In \S\ref{elements} we describe geometrically some interesting elements of $H_m^{\lf}((\minner)^\varphi;k)$ that do not lie in the image of \eqref{base-to-covering}.

\paragraph{Outline of the paper.}

We prove Theorem \ref{thm-pairings} in \S\ref{pairings}, where we also describe the explicit free bases mentioned in Remark \ref{rmk-bases}. Theorem \ref{thm-diagonal} and Corollary \ref{coro-embeddings} are then proven in \S\ref{embeddings}. In \S\ref{genericity} we discuss \emph{genericity} of local systems and prove Theorem \ref{thm-genericity}. Theorem \ref{thm-coverings}, comparing locally-finite homology of infinite coverings with the corresponding twisted locally-finite homology of the base space, is proven in \S\ref{coverings}, in two parts: Theorem \ref{thm-lf-basis} describes the locally-finite homology of the base space and Theorem \ref{thm-lf-basis-covering} describes the locally-finite homology of the covering space and the map between the two. Finally, in \S\ref{formula} we reinterpret the pairings of Theorem \ref{thm-pairings} in terms of the homology of covering spaces and describe a concrete formula for computing these pairings.

\section{Pairings and bases}\label{pairings}

Now, we will show that once we fix a splitting of the boundary of the configuration space, the two homologies relative to each part of this splitting are related by an intersection form.
\begin{thm}[The first part of Theorem \ref{thm-pairings}]\label{T:1}
There exists an intersection pairing 
\begin{equation}
\pairing \colon H_m^{\lf}(\minner) \otimes H_m(\minner, \partial\minner) \longrightarrow R
\end{equation}
which is linear with respect to the $R$-action on the first component twisted by the involution $\alpha$.
\end{thm}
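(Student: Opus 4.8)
The plan is to construct the pairing on the chain level using transverse intersection of locally-finite chains with compactly-supported chains, then check it is well-defined on homology and compute how it interacts with the two $R$-module structures.

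\textbf{Setup.} First I would recall that $\minner$ is a topological $2m$-manifold with boundary, so it has a fundamental class in locally-finite homology and (by excision / Lefschetz--Poincaré duality with local coefficients) one has an isomorphism $H_m^{\lf}(\minner;\cL) \cong H^m(\minner;\cL^{\alpha})$, where $\cL^\alpha$ denotes the local system with monodromy twisted by $\alpha$; similarly $H_m(\minner,\partial\minner;\cL) \cong H^m_c(\minner;\cL^{\alpha})$. The pairing \eqref{eq:pairing} will then essentially be the cap/cup product pairing $H^m \otimes H^m_c \to H^{2m}_c \cong R$ using the Poincaré-duality isomorphisms, combined with the canonical pairing $\cL \otimes \cL^\alpha \to R$ coming from the rank-$1$ structure (this is where Assumption \ref{assumption}, the compatibility of the monodromy with $\alpha$, is used: it makes $\cL \otimes_R \cL$ into a constant system $R$ after twisting one factor by $\alpha$). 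An alternative, more hands-on, route — which I think is cleaner to present geometrically and matches the intersection-theoretic language of the paper — is to work directly with chains.

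\textbf{Chain-level construction.} Represent a class in $H_m^{\lf}(\minner)$ by a locally-finite cycle $\xi$ and a class in $H_m(\minner,\partial\minner)$ by a relative cycle $\eta$, each decorated with lifts of the local system (equivalently, realise them as cycles in the maximal abelian cover, or carry coefficients along). After a small perturbation, make $\xi$ and $\eta$ transverse; since $\dim\xi + \dim\eta = 2m = \dim\minner$, the intersection $\xi \cap \eta$ is a compact $0$-manifold — compact because $\eta$ is a finite (hence compact) chain, so the local-finiteness of $\xi$ causes no trouble, and $\eta$ being relative to $\partial\minner$ while $\xi$ is a genuine cycle means there is no boundary contribution. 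To each intersection point assign the product of the two coefficients (one of them acted on by $\alpha$) times the local sign, and sum: this defines $\pairing(\xi,\eta) \in R$. The $R$-linearity in the second variable is immediate, and the twisted $R$-linearity in the first variable — i.e. $\pairing(r\cdot\xi, \eta) = \alpha(r)\,\pairing(\xi,\eta)$ — falls out of how $r$ acts on the chosen lift of $\xi$ versus the pairing $\cL\otimes\cL^\alpha \to R$.

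\textbf{Well-definedness.} The main work is showing independence of all choices: the perturbation making things transverse, and the choices of representing cycles. For the perturbation one runs the standard cobordism argument — two transverse perturbations are joined by a transverse homotopy whose intersection with $\xi$ (resp.\ $\eta$) is a compact $1$-manifold whose boundary computes the difference of the two counts. For changing $\eta$ by a boundary $\partial\zeta$ (with $\zeta$ a compact $(m{+}1)$-chain possibly touching $\partial\minner$): $\xi\cap\zeta$ is a compact $1$-manifold-with-boundary, and its boundary consists of $\xi\cap\partial\zeta$ together with $\xi\cap(\zeta\cap\partial\minner)$; the latter vanishes because $\xi$, being a locally-finite \emph{cycle} in the interior (configurations with a point on $\binner$ are exactly $\partial\minner$, and $\xi$ avoids $\bouter$ entirely), can be pushed off $\partial\minner$. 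For changing $\xi$ by a locally-finite boundary $\partial\omega$ with $\omega$ a locally-finite $(m{+}1)$-chain: here $\omega\cap\eta$ is a priori only a locally-finite $1$-manifold, but it is in fact compact because $\eta$ is compact, so again its boundary gives the vanishing. The one genuine subtlety — and the step I expect to be the main obstacle — is bookkeeping the local-coefficient data through all these transversality/cobordism arguments: one must fix, once and for all, compatible lifts to the covering space defining $\cL$ so that the coefficient attached to each intersection point is unambiguous, and check that the cobordisms respect these lifts. This is routine but fiddly; packaging it via the Poincaré-duality formulation above, where the pairing is manifestly a composite of natural maps, is the safest way to make the well-definedness transparent, and I would likely present the construction geometrically but invoke the duality description to certify that it descends to homology.
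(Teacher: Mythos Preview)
Your proposal is correct and takes essentially the same approach as the paper: both construct the pairing by applying Poincar\'e--Lefschetz duality to pass from $H_m^{\lf}(\minner)$ to $H^m(\minner,\partial\minner)$ and then capping against $H_m(\minner,\partial\minner)$ to land in $H_0(\minner)\cong R$. The paper's argument is terse, simply citing the duality isomorphism and the relative cap product as black boxes and composing them; you give more detail, including the explicit chain-level intersection description, which the paper postpones to \S\ref{formula} as a computational formula rather than treating it as part of the construction.
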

\begin{proof}
In order to show this, we will use two main homological tools. The first uses the fact that $\minner$ is a connected, orientable $2m$-dimensional manifold.
\begin{prop}[Poincaré duality for twisted homology.]
For any $k\in \N$ such that $k \leq 2m$ we have an isomorphism:
\begin{equation}
p_k \colon H^{k}(\minner,\binner) \longrightarrow H^{\lf}_{2m-k}(\minner).
\end{equation}
\end{prop}

\begin{lem}[Relative cap product for twisted homology.]
Let $k,l \in \N$ such that $0 \leq k\leq l \leq 2m$. Then, we have the following homomorphism:
\begin{equation}
\cap_{k,l} \colon H^{k}(\minner,\binner) \otimes H_{l}(\minner,\binner) \longrightarrow H_{l-k}(\minner).
\end{equation}
\end{lem}

Now, we combine these two results to define the pairing $\pairing$, which is given by the following composition:
\begin{center}
\begin{tikzpicture}
[x=1.2mm,y=1.4mm]

\node (b1) at (-10,30)    {$H_m^{\lf}(\minner) \otimes H_m(\minner, \binner)$};
\node (b2) at (-10,15)   {$H^m(\minner,\binner) \otimes H_m(\minner, \binner)$};
\node (b3) at (-10,0)   {$H_0(\minner)\cong R$};

\draw[->] (b1) to node[left,xshift=-2mm,font=\normalsize]{$p_m^{-1}\otimes \mathrm{Id}$} (b2);
\draw[->] (b2) to node[left,xshift=-2mm,font=\normalsize]{$\cap_{m,m} $} (b3);
\draw[->, in=30, out=-45] (b1.east) to node[right,font=\normalsize]{$\pairing$}   (b3.east);
\end{tikzpicture}
\end{center}
It follows that the pairing $\pairing$ has the required properties.
\end{proof}

\begin{rmk}\label{rmk:choice}
Implicitly, the construction of the pairing above depends on a choice of fundamental class $[\minner] \in H_{2m}^{\lf}(\minner)$ in the step where we use Poincar{\'e} duality. Any two such choices differ by a unit in the underlying ring $R$. Later, when we compute this pairing on explicit homology classes, this choice will correspond, geometrically, to a choice of a collection of paths in the surface from one component of $\bouter$ to each component of $\binner$.
\end{rmk}

We now make a mild simplifying assumption:

\begin{assumption}\label{assumption-boundary}(Partition of the boundary of the surface)\\
We assume that $\bouter \subseteq \partial \Sigma$ is contained in a single component of $\partial \Sigma$. This means that we may draw the surface $\Sigma$ as in Figure \ref{fig-basis-in} (ignore the blue and green arcs for the moment), where $\binner$ is coloured red and $\bouter$ is coloured black.
\end{assumption}

\begin{defn}(Surface)\\
Suppose that $\Sigma$ has $n+1$ boundary circles, and that $\binner$ consists of $n$ of these boundary circles together with $r$ disjoint subintervals of the $(n+1)$st boundary circle. Write $g$ for the genus of $\Sigma$. 
\end{defn}
\begin{notation}(Set of partitions) Let us consider the following indexing set:
\[
E_{l,m}=\{e=(e_1,...,e_{l})\in \N^{l} \mid e_1+...+e_{l}=m \}.
\]
\end{notation}

We will use the set of partitions of $m$ into $l = n-1+k+2g$ pieces.

\subsection{Pairing between $H_m^{\lf}(\minner)$ and $ H_m(\minner, \partial \minner)$.}

\

In this part, we will consider two subspaces in the homologies
\[
H_m^{\lf}(\minner) \qquad\text{ and }\qquad H_m(\minner, \partial \minner)
\]
which are generated by certain submanifolds in the configuration space. Then, we aim to study the precise form of the intersection pairing between these subspaces.

\begin{defn}(Homology classes in $H_m(\minner,\partial \minner)$)\label{D:1}\\
For the first part, for each partition $e=(e_1,...,e_{2g},e'_1,...,e'_{n-1},e''_1,...,e_{k})\in E_{n-1+k+2g,m}$, we construct a homology class $ \cU_e \in H_m(\minner, \partial \minner)$. In order to do this, we draw $m$ red segments on our surface, which are prescribed by this partition, as in figure \ref{fig-basis-in}. More specifically, we have the following three families of curves:
\begin{itemize}
\item $\forall j \in \{1,..,k\} $ we draw $f_j$ parallel segments between the first boundary disk and the $j^{th}$ blue interval in the boundary;
\item  $\forall i \in \{1,..,n-1\} $ we draw $f'_i$ parallel segments between the $i^{th}$ boundary disk and the ${i+1}^{st}$ blue boundary disk;
\item $\forall s \in \{1,..,2g\} $ we draw $f''_s$ parallel arcs between the $n^{th}$ boundary disk and itself, passing through the core of the handle numbered by $s$.
\end{itemize} 
Now, we denote by $\bU_{e}$ the $m-$dimensional submanifold in $\minner$ given by the subspace consisting of configurations where exactly one point lies on each red arc. Since this is orientable and its boundary lies in $\partial\minner$, it has a fundamental class in relative homology, which we denote by 
\[
\cU_e \in H_m(\minner,\partial \minner).
\]
\end{defn}

\begin{defn}\label{def:bin}
(Homology classes $H^{\lf}_m(\minner)$)\label{D:2}\\
Secondly, let $f=(f_1,...,f_{2g},f'_1,...,f'_{n-1},f''_1,...,f''_{k})\in E_{n-1+k+2g,m}$ be a partition.  From this, we define a homology class $ \cD_f \in H_m^{\lf}(\minner)$. In order to do this, we will use configuration spaces on the green segments on our surface, which are prescribed by this partition, as in figure \ref{fig-basis-in}. More specifically:
\begin{itemize}
\item $\forall j \in \{1,..,k\} $ we consider the unordered configuration space of $f_j$ points on a green semicircle around the $j^{th}$ blue segment of the outer boundary;
\item  $\forall i \in \{1,..,n-1\} $ we consider the unordered configuration space of $f'_i$ points on a vertical green segment which lies between the $i^{th}$ boundary disk and the ${(i+1)}^{st}$ blue boundary disk;
\item $\forall s \in \{1,..,2g\} $ we consider the unordered configuration space of  $f''_s$ points on the green segment which is perpendicular to the core of the handle numbered by $s$.
\end{itemize} 
We then define $\bD_{f}$ to be the $m-$dimensional submanifold of $\minner$ consisting of configurations where the prescribed number of points lies on each of the green arcs. Since this is orientable and properly embedded, it has a fundamental class in locally-finite homology, which we denote by
\[
\cD_f \in H_m^{\lf}(\minner).
\]
Moreover, we write
\[
\cB_{\mathrm{in}} = \{ \cD_f \mid f \in E_{n-1+k+2g,m} \} .
\]
\end{defn}

\begin{figure}[ht]
\centering
\[
{\color{red} \cU_e\in H^{\partial}_m(\minner)} \hspace{10mm} \text{ and } \hspace{10mm} {\color{dgreen} \cD_f \in  \bar{H}^{\lf}_m(\minner)}
\]
\includegraphics[scale=0.6]{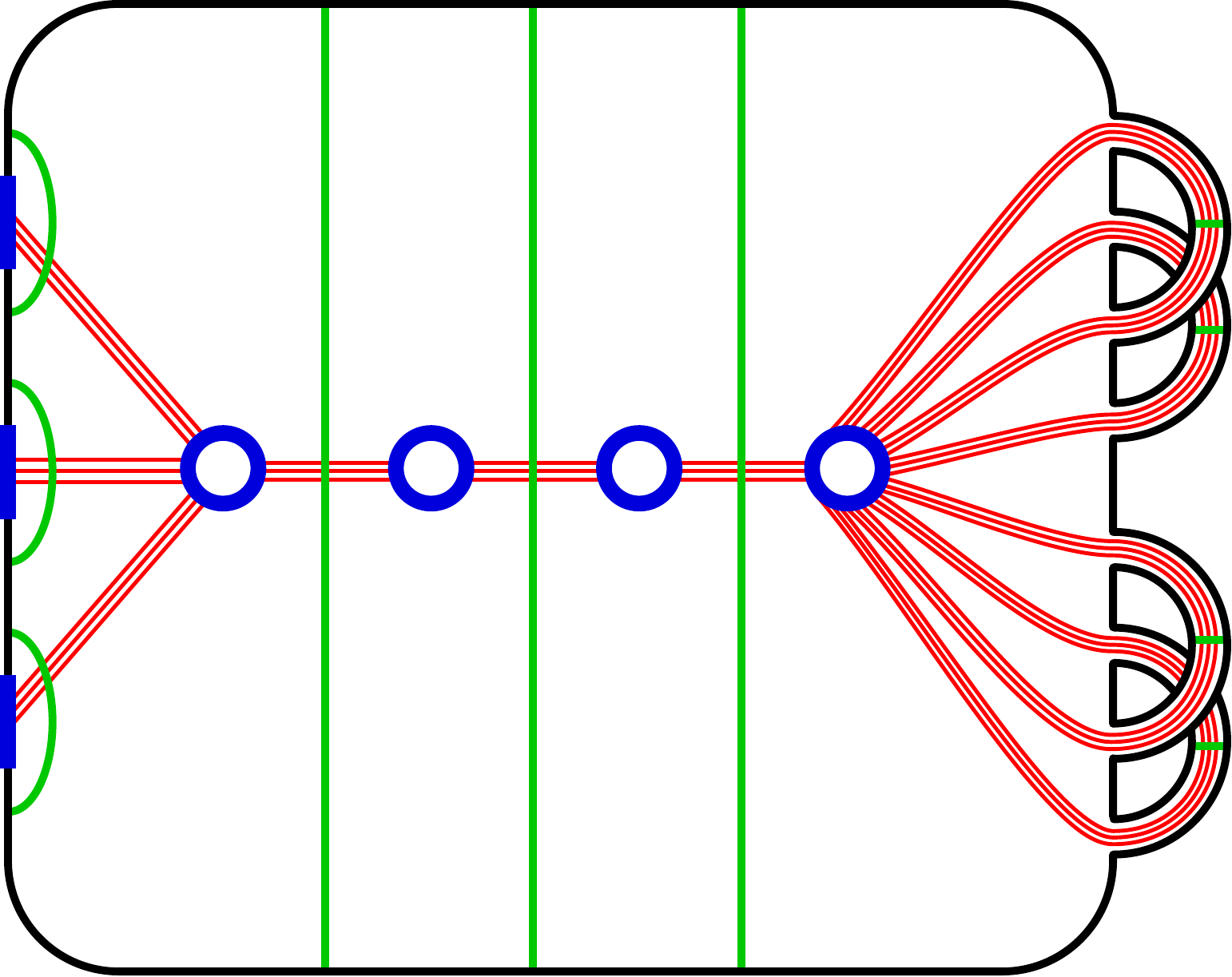}
\caption{The basis for $\bar{H}^{\lf}_m(\minner)$ is in \textcolor{dgreen}{green} and the basis for $H^{\partial}_m(\minner)$ is in \textcolor{red}{red}.}
\label{fig-basis-in}
\end{figure}

\begin{defn}(Subspaces generated by these homology classes)\\
Let us consider the subspaces given by these particular elements and denote them as below:
\begin{equation}\label{eq:bases}
\begin{aligned}
H_m^{\partial}(\minner) &\coloneqq \langle \cU_e \mid e \in E_{n-1+k+2g,m} \rangle \subseteq H_m(\minner,\partial \minner)\\
\bar{H}^{\lf}_m(\minner) &\coloneqq \langle \cD_f \mid f \in E_{n-1+k+2g,m} \rangle \subseteq H^{\lf}_m(\minner).
\end{aligned}
\end{equation}
\end{defn}

\begin{lem}[Diagonal intersection form]\label{L:1}\hspace{0pt}\\
The intersection pairing is diagonal on these subspaces, in the sense that we have:
\begin{equation}
\begin{aligned}
\pairing &\colon \bar{H}^{\lf}_m(\minner) \otimes H_m^{\partial}(\minner) \longrightarrow R\\
& \qquad\quad \langle \cD_f, \cU_e \rangle= \delta_{e,f},
\end{aligned}
\end{equation}
where $\delta$ denotes the Kronecker symbol. As a consequence, the sets of elements $\{\cU_e\}$ and $\{\cD_f\}$ are linearly independent, so they are bases for the free $R$-modules $H_m^{\partial}(\minner)$ and $\bar{H}^{\lf}_m(\minner)$.
\end{lem}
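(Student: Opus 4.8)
The plan is to compute the geometric intersection number $\langle \cD_f, \cU_e\rangle$ directly from the description of the pairing $\pairing$ as a cap product against the Poincar\'e dual, using the fact that both $\bU_f$ (the green configuration submanifold representing $\cD_f$) and $\bU_e$ (the red configuration submanifold representing $\cU_e$) are concretely described by arcs drawn on the surface $\Sigma$. Since the pairing $\langle\,\cdot\,,\cdot\,\rangle$ is, up to sign and a unit (the choice of fundamental class, see Remark \ref{rmk:choice}), the algebraic intersection number of a properly embedded $m$-submanifold with a relative-cycle $m$-submanifold in the $2m$-manifold $\minner$, the first step is to arrange the green and red arcs to be transverse and to enumerate the intersection points of $\bU_f$ and $\bU_e$. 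The key geometric observation is that the $j$-th green arc (a semicircle around the $j$-th blue outer interval, or a vertical/handle segment) meets the $j$-th family of red arcs in exactly one point and is disjoint from all the other red arcs — the green arcs are set up precisely to be ``dual'' to the red arcs component-by-component. Consequently a configuration in $\bU_f \cap \bU_e$ must place, for each index in the partition data, the prescribed points at these unique intersection points; this is possible if and only if the multiplicities agree on the nose, i.e.\ $f = e$, and in that case there is exactly one such configuration (the points are unordered, so no extra factor appears).

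Next I would verify that this single intersection point is transverse and contributes $+1$ (with respect to the chosen orientations and fundamental class), so that $\langle \cD_f,\cU_e\rangle = \delta_{e,f}$ as claimed; this is where one pins down the conventions hidden in Remark \ref{rmk:choice}, and where the $\alpha$-twisted $R$-linearity of the pairing is used to see that the \emph{local system} contributes the trivial monodromy $1 \in R^\times$ at this intersection point (one must check the small loop comparing the two local lifts is nullhomotopic, or more precisely carries trivial monodromy by the homogeneity/definition of $\varphi$ — but for the Kronecker computation we only need the value at the diagonal, where the relevant loop is contractible). Once $\langle \cD_f,\cU_e\rangle = \delta_{e,f}$ is established, the linear-independence and basis conclusions are immediate formal consequences: if $\sum_f a_f \cD_f = 0$ then pairing with $\cU_e$ gives $a_e = 0$ for every $e$, so $\{\cD_f\}$ is $R$-linearly independent and hence a basis of the free $R$-module $\bar H^{\lf}_m(\minner)$ it spans; symmetrically, using $R$-linearity in the second variable, $\{\cU_e\}$ is a basis of $H^\partial_m(\minner)$.

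The main obstacle is the careful bookkeeping of the intersection-point count and its orientation sign: one has to be sure that when the partitions $f$ and $e$ agree, the green and red submanifolds of $\minner$ meet in a \emph{single} transverse point (rather than, say, a positive-dimensional locus or several points), and that when they disagree the intersection is genuinely empty. This requires being precise about how an unordered configuration in $\minner$ decomposes according to which green arc / red arc each point sits on, and checking that the ``wrong-multiplicity'' configurations really cannot occur because a green arc of one family is disjoint from the red arcs of the other families. I expect this to be a short but slightly delicate local analysis near each of the $n-1+k+2g$ designated regions of Figure \ref{fig-basis-in}; the orientation sign can then be absorbed into the choice of fundamental class, so that after fixing that choice the pairing matrix is literally the identity.
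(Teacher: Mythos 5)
Your proposal is correct and follows essentially the same route as the paper's proof: a direct count of intersection points between the green and red configuration submanifolds, showing the intersection in $\minner$ is empty unless the partitions agree (because the green arc of each family meets only the red arcs of the same family) and consists of a single transverse point of coefficient $1$ when $e=f$, after which linear independence is the formal consequence you state. The only cosmetic difference is that the paper deduces $e=f$ from the inequalities $f_j \geq e_j$ combined with the equality of total sums, whereas you argue the multiplicities must agree family-by-family directly; both rest on the same geometric fact.
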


\begin{proof}
Let $e,f \in E_{n-1+k+2g,m}$. Suppose we have an intersection point $x \in \cD_f \cap \cU_e$. This means that $x$ is a multipoint in the configuration space, having $m$ components. In particular, since $x \in \cD_f$ this means that its components are distributed on the green segments as below:  
\begin{itemize}
\item $\forall j \in \{1,..,k\} $, $x$ has $f_j$ points on the green semicircle around the $j^{th}$ blue segment in the boundary 
\item $\forall i \in \{1,..,n-1\} $, it has $f'_i$ points on the vertical green segment which lies between the $i^{th}$ boundary disk and the ${i+1}^{st}$ blue boundary disk
\item $\forall s \in \{1,..,2g\} $, there are $f''_s$ components on the green segment which ``cuts'' the handle numbered by $s$.
\end{itemize} 
On the other hand, all these points have to lie on the red segments as well. This means that we have the following inequalities:
\begin{equation}
\begin{aligned}
&f_j \geq e_j, \ \ \forall j \in \{1,..,k\}\\
&f'_i \geq e'_i, \ \ \forall i \in \{1,..,n-1\}\\
&f''_s \geq e''_s, \ \ \forall s \in \{1,..,2g\}.
\end{aligned}
\end{equation}
However, since the total sum is:
\begin{equation}
\begin{aligned}
&e_1+ \cdots +e_{2g}+e'_1+ \cdots +e'_{n-1}+e''_1+ \cdots +e''_{k}\\
&\quad =f_1+ \cdots +f_{2g}+f'_1+ \cdots +f'_{n-1}+f''_1+ \cdots +f''_{k}\\
&\quad =n-1+r+2g,
\end{aligned}
\end{equation}
it follows that the partitions coincide and so $e=f$.

We conclude that if the intersection form $\langle \cD_f, \cU_e \rangle \neq 0$ then $e=f$. Now, if the partitions coincide, it is easy to compute the orientations on the picture and see that 
$\langle \cD_f, \cU_e \rangle=1$, which concludes the proof.
\end{proof}

\subsection{Pairing between $H_m^{\lf}(\mouter)$ and $ H_m(\mouter, \partial \mouter)$}
\hspace{0pt}

Now, we construct two subspaces in the homologies of the configuration space based on the outer part of the boundary 
\[
H_m^{\lf}(\mouter) \hspace{10mm} \text{ and } \hspace{10mm} H_m(\mouter, \partial \mouter),
\]
generated by geometric elements, and we will be interested in the pairing between these. The arguments are analogous to those for the configuration spaces on $\minner$ considered above, so we outline below just the main points of the construction.  
\begin{defn}\label{def:bout}
(Elements in the two homologies)\\
For two partitions $e,f\in E_{n-1+k+2g,m}$, we define $\cG_f \in  \bar{H}^{\partial}_m(\mouter)$ and $\cV_e\in \bar{H}^{\lf}_m(\mouter)$ to be the homology classes given by the quotient of the products of the green segments and configuration spaces on the red segments respectively, from figure \ref{fig-basis-out}, prescribed by these partitions (constructed similarly to those of definitions \ref{D:1} and \ref{D:2}). In particular, we write
\[
\cB_{\mathrm{out}} = \{ \cV_e \mid e \in E_{n-1+k+2g,m} \} .
\]
\end{defn}

\begin{figure}[ht]
\centering
\[
{\color{red} \cV_e\in \bar{H}^{\lf}_m(\mouter)} \hspace{10mm} \text{ and } \hspace{10mm} {\color{dgreen} \cG_f \in  \bar{H}^{\partial}_m(\mouter)}
\]
\includegraphics[scale=0.6]{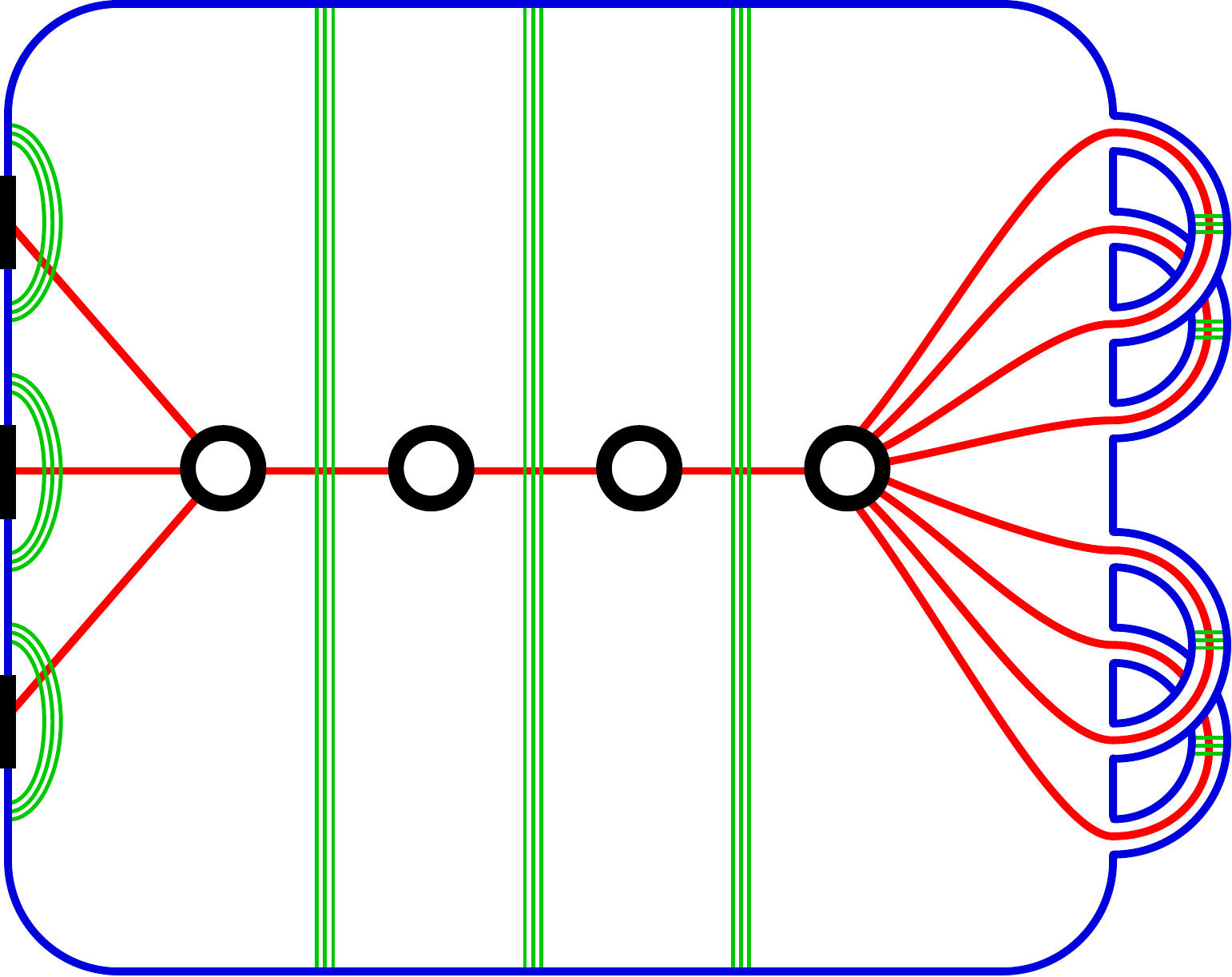}
\caption{The basis for $\bar{H}^{\lf}_m(\mouter)$ is in \textcolor{red}{red} and the basis for $H^{\partial}_m(\mouter)$ is in \textcolor{dgreen}{green}.}
\label{fig-basis-out}
\end{figure}
\begin{defn}(Subspaces in these homology groups)\\
Using these particular homology classes, we define the following subspaces generated by all these elements:
\begin{equation}
\begin{aligned}
\bar{H}^{\lf}_m(\mouter) &\coloneqq \langle \cV_e \mid e \in E_{n-1+k+2g,m} \rangle \subseteq H^{\lf}_m(\mouter)\\
H_m^{\partial}(\mouter) &\coloneqq \langle \cG_f \mid f \in E_{n-1+k+2g,m} \rangle \subseteq H_m(\mouter,\partial \mouter).
\end{aligned}
\end{equation}
\end{defn}
\begin{lem}[Diagonal intersection form]\label{L:2}\hspace{0pt}\\
The intersection pairing is diagonal on these subspaces, in the sense that we have:
\begin{equation}
\begin{aligned}
\pairing &\colon \bar{H}^{\lf}_m(\mouter) \otimes H_m^{\partial}(\mouter) \longrightarrow R\\
& \qquad\quad \langle \cV_e, \cG_f \rangle= \delta_{e,f},
\end{aligned}
\end{equation}
where $\delta$ denotes the Kronecker symbol. As a consequence, the sets of elements $\{\cV_e\}$ and $\{\cG_f\}$ are linearly independent, so they are bases for the free $R$-modules $H_m^{\partial}(\mouter)$ and $\bar{H}^{\lf}_m(\mouter)$.
\end{lem}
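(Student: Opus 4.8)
The plan is to follow the proof of Lemma~\ref{L:1} essentially verbatim, with the roles of the red and green arcs — equivalently, of locally-finite and relative homology — interchanged. First I would reconstruct the intersection pairing for the outer configuration space exactly as in Theorem~\ref{T:1}: since $\mouter$ is a connected orientable $2m$-manifold with boundary, Poincaré duality for twisted homology gives an isomorphism $p_m \colon H^m(\mouter,\bouter) \to H_m^{\lf}(\mouter)$, and composing $p_m^{-1}\otimes\mathrm{Id}$ with the relative cap product $\cap_{m,m}$ produces an $\alpha$-twisted $R$-bilinear pairing
\[
\pairing \colon H_m^{\lf}(\mouter) \otimes H_m(\mouter,\partial\mouter) \too H_0(\mouter) \cong R .
\]
As in Remark~\ref{rmk:choice}, this depends on a choice of fundamental class $[\mouter]\in H_{2m}^{\lf}(\mouter)$ — geometrically, a choice of connecting paths between the relevant boundary components — which I would fix once and for all.

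Next I would name the underlying submanifolds, mirroring Definitions~\ref{D:1} and~\ref{def:bin}: let $\bV_e\subseteq\mouter$ be the properly embedded configuration-space submanifold supported on the red arcs of Figure~\ref{fig-basis-out}, so that $\cV_e=[\bV_e]\in H_m^{\lf}(\mouter)$, and let $\bG_f\subseteq\mouter$ be the orientable submanifold supported on the green arcs, with boundary in $\partial\mouter$, so that $\cG_f=[\bG_f]\in H_m(\mouter,\partial\mouter)$. After a small perturbation putting $\bV_e$ and $\bG_f$ in transverse position, the value $\langle\cV_e,\cG_f\rangle$ is computed as a signed sum over the finitely many configurations in $\bV_e\cap\bG_f$, each weighted by the monodromy of $\cL$ along the fixed connecting paths.

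The combinatorial core is then identical to that of Lemma~\ref{L:1}. A configuration lying in $\bV_e\cap\bG_f$ must simultaneously distribute its $m$ points among the red arcs according to $e$ and among the green arcs according to $f$; by the geometry of Figure~\ref{fig-basis-out}, in which each arc of one of the two systems meets only the corresponding arcs of the other, and in at most one point, this forces componentwise inequalities between the parts of $e$ and of $f$ — now in the opposite direction to the ones in Lemma~\ref{L:1}. Since $e$ and $f$ are both partitions of $m$, these inequalities force $e=f$, whence $\langle\cV_e,\cG_f\rangle=0$ for $e\neq f$. When $e=f$ there is a single transverse intersection configuration, all of whose points sit at the ``trivial'' crossings of the two arc systems, so the associated connecting paths are null-homotopic and contribute trivial monodromy; a local orientation check — the mirror image of the one used for $\minner$ — then gives contribution $+1$. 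Hence the matrix of $\pairing$ on the generating sets $\cB_{\mathrm{out}}=\{\cV_e\}$ and $\{\cG_f\}$ is the identity matrix, and, exactly as at the end of the proof of Lemma~\ref{L:1}, this forces both generating sets to be $R$-linearly independent, hence free bases of $\bar{H}^{\lf}_m(\mouter)$ and $H_m^\partial(\mouter)$ respectively.

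I expect the only genuine work — as was already the case for Lemma~\ref{L:1} — to be bookkeeping rather than a new idea: checking that $\bV_e$ and $\bG_f$ can be made transverse inside the configuration space, and pinning down the orientation conventions together with the choice of connecting paths from Remark~\ref{rmk:choice} so that the unique diagonal intersection contributes precisely $+1$ rather than some other unit of $R^\times$. Since the relevant local picture is literally the mirror of the one appearing for $\minner$, this should be a routine, if slightly tedious, sign computation rather than a genuine obstacle.
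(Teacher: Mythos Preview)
Your proposal is correct and follows exactly the approach the paper takes: the paper's own proof of Lemma~\ref{L:2} consists of the single sentence ``This property follows by a similar argument as the one presented in Lemma~\ref{L:1}'', and your write-up is precisely a careful unpacking of that sentence with the roles of $\minner/\mouter$ and the red/green arc systems interchanged.
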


This property follows by a similar argument as the one presented in Lemma \ref{L:1}.

Finally, we note that there is no need to restrict to a submodule on the locally-finite side:

\begin{lem}\label{lem:generating}
We have
\[
\bar{H}_m^{\lf}(\minner) = H_m^{\lf}(\minner) \qquad\text{and}\qquad \bar{H}_m^{\lf}(\mouter) = H_m^{\lf}(\mouter).
\]
In other words, the elements $\{\cD_f\}$ generate the locally-finite homology group $H_m^{\lf}(\minner)$ and the the elements $\{\cV_e\}$ generate the locally-finite homology group $H_m^{\lf}(\mouter)$.
\end{lem}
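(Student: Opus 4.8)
The containment $\bar H_m^{\lf}(\minner)\subseteq H_m^{\lf}(\minner)$ holds by construction of the classes $\cD_f$, and similarly for $\mouter$; the two cases are parallel, so I will treat only $\minner$ and concentrate on the reverse inclusion. The plan is to compute $H_m^{\lf}(\minner)$ outright and check that the $\cD_f$ exhaust a free basis. Since Lemma~\ref{L:1} already shows that $\{\cD_f\mid f\in E_{n-1+k+2g,m}\}$ is $R$-linearly independent, it is enough to produce a generating set of this cardinality.

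First I would pass to cohomology via the Poincar\'e--Lefschetz duality isomorphism $p_m\colon H^m(\minner,\binner)\xrightarrow{\ \sim\ }H_m^{\lf}(\minner)$ used in the proof of Theorem~\ref{T:1} (recall that the subspace $\binner\subseteq\partial\minner$ is exactly the locus of configurations meeting $\binner$, so this is ordinary Poincar\'e--Lefschetz duality for the oriented $2m$-manifold $\minner$, up to the shift of module structure by $\alpha$). It then suffices to show that $H^m(\minner,\binner)$ is generated by the Poincar\'e duals $p_m^{-1}(\cD_f)$. Next I would choose a handle decomposition of the open surface $S=\mathrm{int}(\Sigma)\cup\mathrm{int}(\binner)$ adapted to the system of green arcs of Definition~\ref{def:bin}: cutting $S$ along these $l=n-1+k+2g$ disjoint, properly embedded arcs produces a surface $S'$ with $\chi(S')=\chi(S)+l=k$ (in the braid case $k=0$, so $S'$ is a half-open annulus), and $S$ is recovered from $S'$ by attaching $l$ bands, one running transversely across each green arc, so that each green arc is the co-core of a band. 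This induces a filtration of $\minner=C_m(S)$ by the number of configuration points lying on the attached bands, hence a spectral sequence converging to $H_*^{\lf}(\minner)$.

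The key step will be to identify what survives in total degree $m$. The strata in which all $m$ points lie on the co-cores of the bands (equivalently, on the green arcs) are indexed precisely by the distributions $f\in E_{n-1+k+2g,m}$, and their locally-finite fundamental classes are the $\cD_f$. I would then argue that every remaining stratum --- one in which at least one configuration point lies in the core piece $S'$ --- contributes nothing in total degree $m$: this is where the non-triviality of $\cL$ enters, since every essential loop of such a stratum carries $\cL$-monodromy a non-trivial unit of $R$ (either a loop winding some configuration point around all of $\binner$, with monodromy a power of $x$, or a loop swapping two configuration points within a disc, with monodromy $d$), and a Künneth/universal-coefficient computation then forces the relevant locally-finite homology of these strata to vanish. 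Consequently $H_m^{\lf}(\minner)$ is generated by $\{\cD_f\}$, which together with Lemma~\ref{L:1} gives the claim, and the $\mouter$ case is identical with $\cV_e$ in place of $\cD_f$.

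The hard part will be exactly this last step --- showing that no ``extra'' locally-finite $m$-cycle survives, i.e.\ that every locally-finite $m$-cycle in $\minner$ can be pushed off the core $S'$ --- and making the vanishing of all the ``core'' contributions work uniformly in $m$ and over an arbitrary ground ring $R$; note that this genuinely uses $\cL$ being non-trivial (for the trivial local system and $m\geq 2$ there are extra classes coming from several points simultaneously wandering at infinity). This computation is carried out in detail in \S\ref{coverings}, where Theorem~\ref{thm-lf-basis} exhibits the explicit free basis, and Lemma~\ref{lem:generating} may simply be deduced from there. A softer alternative would be to combine the perfect pairing $\langle\cD_f,\cU_e\rangle=\delta_{e,f}$ of Lemma~\ref{L:1} with non-degeneracy of the full pairing of Theorem~\ref{T:1} on its locally-finite factor: for $z\in H_m^{\lf}(\minner)$ set $z'=\sum_e\langle z,\cU_e\rangle\,\cD_e$, so that $z-z'$ pairs trivially with every $\cU_e$; if one also knew that the classes $\cU_e$ generate $H_m(\minner,\partial\minner)$ modulo the radical of that pairing, then $z-z'$ would lie in the left radical and hence vanish. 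This alternative, however, merely relocates the difficulty, since pinning down the radical of the full pairing is essentially the same problem.
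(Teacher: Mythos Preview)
Your proposal does eventually land on the right place: the paper's proof of this lemma is literally the single sentence ``This is precisely Theorem~\ref{thm-lf-basis}'', and you say as much near the end. So at the level of logical dependency you agree with the paper.

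The bulk of your proposal, however, sketches a different route to the content of Theorem~\ref{thm-lf-basis}---a handle/band filtration and a spectral sequence, with the ``core'' strata killed by non-trivial monodromy of $\cL$---and this sketch contains a genuine misconception. You assert that the vanishing of the core contributions ``genuinely uses $\cL$ being non-trivial'' and that for the trivial local system and $m\geq 2$ there would be extra locally-finite $m$-classes. This is false: Theorem~\ref{thm-lf-basis} holds for \emph{every} rank-$1$ local system, including the trivial one, as the Remark immediately following its statement makes explicit. The paper's actual argument is purely geometric: one chooses a non-expanding strong deformation retraction of $\Sigma$ onto the graph $\Gamma = \bouter \cup (\text{green arcs})$ and lifts it to a deformation retraction of pairs $(M,M_\epsilon)\to(M_\Gamma,M_\Gamma\cap M_\epsilon)$, with an auxiliary excision step to handle the failure of the time-$0$ map to be an embedding. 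No property of $\cL$ beyond being a rank-$1$ local system is ever invoked. Your monodromy argument would at best apply only to the specific local system of Example~\ref{eg:local-system}, and even there ``monodromy is a non-trivial unit'' does not force the relevant vanishing over $\bZ[x^{\pm 1},d^{\pm 1}]$: that would require $1-m_\gamma$ to be a unit, i.e.\ genericity in the sense of Definition~\ref{def-generic}, which that local system does not satisfy (Remark~\ref{rmk:generic}).

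Your ``softer alternative'' via the pairing is, as you yourself correctly diagnose, circular.
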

\begin{proof}
This is precisely Theorem \ref{thm-lf-basis}.
\end{proof}

Putting together Theorem \ref{T:1}, Lemma \ref{L:1}, Lemma \ref{L:2} and Lemma \ref{lem:generating}, we conclude the proof of Theorem \ref{thm-pairings}, presented in the introduction.

\section{Embeddings of representations}\label{embeddings}

This part is devoted to the study of the relationships between the two pairs of homology groups presented in the previous section:
$$\left( {\color{red}  H_m^\partial(\minner)  \hspace{5mm} {\color {black} \text { and } } \hspace{5mm} H_m^{\lf}(\mouter)} \right) \ \ \ \ \ \left( {\color{dgreen} H_m^\partial(\mouter)  \hspace{5mm} {\color {black} \text { and } } \hspace{5mm} H_m^{\lf}(\minner) }\right).$$

\begin{thm}[First part of Theorem \ref{thm-diagonal}]\hspace{0pt}\\
We have the follwing $R$-module homomorphisms which are $\Gamma(\Sigma;\bouter)$-equivariant:
\begin{equation}\label{injections2}
\begin{aligned}
\iota_{in} \colon H_m^\partial(\minner) &\too H_m^{\lf}(\mouter) \\
\iota_{out} \colon H_m^\partial(\mouter) &\too H_m^{\lf}(\minner).
\end{aligned}
\end{equation}
\end{thm}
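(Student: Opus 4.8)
The plan is to exhibit $\iota_{\mathrm{in}}$ and $\iota_{\mathrm{out}}$ as two instances of a single natural construction, so that $\Gamma(\Sigma;\bouter)$-equivariance is automatic from functoriality. Write $M = C_m(\Sigma)$, the compact configuration space, which contains both $\minner$ and $\mouter$ as open dense subsets, and let $A \subseteq M$ (resp.\ $B \subseteq M$) be the closed subset of configurations meeting $\binner$ (resp.\ $\bouter$). Then $\minner = M \setminus B$, $\mouter = M \setminus A$, $\partial\minner = A \setminus B$ and $\partial\mouter = B \setminus A$. Since a diffeomorphism in $\mathrm{Diff}(\Sigma;\bouter)$ carries $\binner$ to $\binner$ (and fixes $\bouter$), the induced homeomorphism of $M$ preserves both $A$ and $B$, and by Assumption \ref{assumption} it preserves the local system $\cL$.

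Next I would use the standard \emph{restriction} map from the homology of a compact pair to the Borel--Moore homology of its open complement: for any closed subset $Z \subseteq M$ there is a natural $R$-linear map
\[
\rho_Z \colon H_k(M, Z; \cL) \too H^{\lf}_k(M \setminus Z; \cL),
\]
assembled, over the compact subsets $K$ of the locally compact space $M \setminus Z$, from the maps $H_k(M, Z) \to H_k(M, M \setminus K)$ (legitimate because $Z \subseteq M \setminus K$) followed by the excision isomorphisms $H_k(M, M \setminus K) \cong H_k(M \setminus Z, (M \setminus Z) \setminus K)$ (legitimate because $Z$ is closed and contained in the open set $M \setminus K$), and then passing to the inverse limit $H^{\lf}_k(M \setminus Z;\cL) = \varprojlim_K H_k(M\setminus Z, (M\setminus Z)\setminus K;\cL)$; this map is functorial for homeomorphisms of $M$ preserving $Z$ and $\cL$. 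I then define
\[
\iota_{\mathrm{in}} \colon H_m^\partial(\minner) \longhookrightarrow H_m(\minner, \partial\minner;\cL) \too H_m(M, A;\cL) \xrightarrow{\ \rho_A\ } H^{\lf}_m(M\setminus A;\cL) = H^{\lf}_m(\mouter;\cL),
\]
where the first map is the inclusion of the sub-representation of Theorem \ref{thm-pairings} and the middle map is induced by the inclusion of pairs $(\minner,\partial\minner) = (M\setminus B,\, A\setminus B) \hookrightarrow (M, A)$; and I define $\iota_{\mathrm{out}}$ by the same recipe with the roles of $\binner$ and $\bouter$ (hence of $A$ and $B$, and of $\minner$ and $\mouter$) exchanged. (Equivalently, one may build the composite on all of $H_m(\minner,\partial\minner;\cL)$ and only afterwards restrict to the submodule.) Every arrow in these composites is $R$-linear and natural for the $\Gamma(\Sigma;\bouter)$-action — using that this action preserves $A$, $B$ and $\cL$, and that $H_m^\partial(\minner)$ and $H_m^\partial(\mouter)$ are sub-representations by Theorem \ref{thm-pairings} — so $\iota_{\mathrm{in}}$ and $\iota_{\mathrm{out}}$ are $\Gamma(\Sigma;\bouter)$-equivariant.

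There is no serious obstacle here: the argument is purely formal, and the only points needing care are the construction of $\rho_Z$ with local coefficients — checking the excision hypotheses and the compatibility of the maps $H_k(M, M\setminus K)$ as $K$ grows, all standard — and the (already established) fact that $H_m^\partial$ is $\Gamma(\Sigma;\bouter)$-stable on each side. For orientation about what the remaining parts of Theorem \ref{thm-diagonal} must still do: on the bases of Remark \ref{rmk-bases}, $\iota_{\mathrm{in}}$ sends $\cU_e$, the relative fundamental class of the product of red arcs $\bU_e$, to the Borel--Moore fundamental class in $\mouter$ of its interior $\mathrm{int}(\bU_e)$ (which is properly embedded in $\mouter$ because a point reaching an endpoint of a red arc leaves $\mouter$); identifying this Borel--Moore class with a scalar multiple of $\cV_e$, and computing the scalar as a product of quantum factorials under a $u$-homogeneity hypothesis, is exactly the content of the diagonality and quantum-factorial statements, and is where the real work lies.
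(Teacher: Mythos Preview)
Your argument is correct and is essentially the same as the paper's: both build $\iota_{\mathrm{in}}$ as the composite of the inclusion $H_m(\minner,\partial\minner)\to H_m(M,A)$ with the canonical ``restrict to the open complement'' map into $H_m^{\lf}(\mouter)$, and deduce equivariance from naturality. The only cosmetic difference is that the paper realises this last map via the one-point compactification, factoring through $H_m(M/\partial\minner,\star)\to H_m(\mouter^{\star},\infty)\cong H_m^{\lf}(\mouter)$, whereas you use the inverse-limit description $H_m^{\lf}(\mouter)=\varprojlim_K H_m(\mouter,\mouter\smallsetminus K)$ and excision; these are two standard models of the same Borel--Moore restriction map, so the two constructions agree.
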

\begin{proof}
We define the map $\iota_{in}$, which comes from the following composition of maps between pairs of spaces:
\begin{center}
\begin{tikzpicture}
[x=1.2mm,y=1.4mm]

\node (b1) at (-30,30)    {$(\minner, \partial \minner)$};
\node (b2) at (-30,15)   {$(M, \partial \minner)$};
\node (b3) at (30,15)   {$(M/ \partial \minner, \star)$};
\node (b4) at (30,30)   {$(\mouter^{\star},\infty)$};

\draw[->] (b1) to node[left,font=\normalsize]{$i$} (b2);
\draw[->] (b2) to node[above,,font=\normalsize]{$f_q$} (b3);
\draw[->] (b3) to node[right,,font=\normalsize]{$f_c$} (b4);
\draw[->, dashed] (b1) to node[above,font=\normalsize]{$\iota_{in}$}  (b4);
\end{tikzpicture}
\end{center}
Here, $i$ is the inclusion from $\minner$ into $M$ and $f_q$ is the map induced by taking the quotient of the space $M$ with respect to the subspace $\partial \minner$, where we write $\star$ for the point $\partial\minner / \partial\minner$.

Note that $M/\partial\minner \smallsetminus \{\star\}$ is equal to $\mouter$, so the composition $\mouter \hookrightarrow M \to M/\partial\minner$ is a one-point \emph{partial} compactification of $\mouter$. This means that the (unique) one-point compactification $\mouter \to \mouter^{\star}$ factors through a map $M/\partial\minner \to \mouter^{\star}$, which is the map $f_c$ above.

Now, using the property that these maps preserve the subspaces $\partial \minner$ and $\partial \mouter$ of $M$, we see that $\iota_{in}$ is $\mathrm{Diff}(\Sigma;\bouter)$-equivariant, and hence its induced map on homology is $\Gamma(\Sigma;\bouter)$-equivariant, which concludes the first part of the statement. 

A similar argument leads to the construction of the map $\iota_{out}$, which is equivariant with respect to the diffeomorphism group action.
\end{proof}

In the next part, we are interested in studying the image of these maps evaluated on the special bases that we have defined in section \ref{pairings}. We start with the following notation.
\begin{defn}(New elements in $H_m^{\lf}(\mouter)$ and $H_m^{\lf}(\minner)$)\\
For any partition $e \in E_{n-1+k+2g,m}$ we consider the classes given by the geometric supports $\bU_e$ and $\bG_e$, this time in the homologies $H_m^{\lf}(\mouter)$ and $H_m^{\lf}(\minner)$ and denote then as below:
\begin{equation}
\begin{aligned}
&\tilde{\cU}_e:= [\bU_e] \in H_m^{\lf}(\mouter) \\
& \tilde{\cG}_e:= [\bG_e] \in  H_m^{\lf}(\minner).
\end{aligned}
\end{equation}

\end{defn}
\begin{rmk} We notice that these new families of elements are exactly the images of the special classes $\cU_e$ and $\cG_e$, seen in the Borel-Moore homology: 
\begin{equation}
\begin{aligned}
& \tilde{\cU}_e= \iota_{in}(\cU_e) \in H_m^{\lf}(\mouter) \\
& \tilde{\cG}_e= \iota_{out}(\cG_e) \in  H_m^{\lf}(\minner).
\end{aligned}
\end{equation}
\end{rmk}
\begin{lem}[Relations between the special homology classes]\hspace{0pt}\\
For any partition $e=(e_1,...,e_{n-1+k+2g})$ from $E_{n-1+k+2g,m}$ the following relations are satisfied:
\begin{equation}
\begin{aligned}
& \tilde{\cU}_e = \prod_{i=1}^{n-1+k+2g} [e_i]_{u}! \, \cV_e \in H_m^{\lf}(\mouter) \\
& \tilde{\cG}_e = \prod_{i=1}^{n-1+k+2g} [e_i]_{u}! \, \cD_e \in  H_m^{\lf}(\minner).
\end{aligned}
\end{equation}
\end{lem}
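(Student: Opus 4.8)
The plan is to reduce the identity to a local computation, one bundle of parallel arcs at a time, and then reassemble multiplicatively.

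First I would record that both sides are built out of the same product structure. By Definitions \ref{D:1} and \ref{D:2} and their outer analogues, once one fixes how the $m$ configuration points are distributed among the $l = n-1+k+2g$ sub-surfaces of $\Sigma$ carrying the $l$ bundles of arcs — namely as the partition $e=(e_1,\dots,e_l)$ — the relevant part of $\mouter$ is a disjoint union of products of configuration spaces $C_{e_1}(U_1)\times\cdots\times C_{e_l}(U_l)$, on which $\cL$ restricts factor by factor to the rank-$1$ system determined by the single unit $u$. The support $\bU_e$ of $\tilde\cU_e$ is then the product of the local submanifolds $\bU^{(i)}\subseteq C_{e_i}(U_i)$ (``one point on each of the $e_i$ parallel arcs''), and the support $\bV_e$ of $\cV_e$ is the product of $\bV^{(i)} := C_{e_i}(a_i)$ (``$e_i$ points on the $i$-th merged arc''). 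Moreover the isotopy realising the relation can be taken to keep each block of $e_i$ points inside its region, so it decomposes as a product of local isotopies and a Künneth argument in Borel--Moore homology reduces everything to proving, for each $i$, the local relation
\[ [\bU^{(i)}] \;=\; [e_i]_u!\,[\bV^{(i)}] \ \in\ H^{\lf}_{e_i}\bigl(C_{e_i}(U_i);\cL\bigr). \]

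For the local statement, write $k=e_i$ and isotope the $k$ parallel arcs into an arbitrarily small tubular neighbourhood of the merged arc $a_i$; then $[\bU^{(i)}]$ is represented by the image of a map $I^k\to C_k(U_i)$ of a $k$-cube, the $j$-th coordinate recording the position of the point on the $j$-th arc. Subdivide the cube along the hyperplanes $x_a=x_b$ ($a\neq b$) into the $k!$ simplices $\Delta_\sigma$, $\sigma\in S_k$, according to the order of the coordinates. On each $\Delta_\sigma$ the $k$ points have pairwise distinct $x$-coordinates, so one may push the $k$ arcs straight onto $a_i$ with no collisions; after a further positive half-twist isotopy bringing the resulting configuration into the standard position representing $\bV^{(i)}$, the simplex $\Delta_\sigma$ is carried onto $\bV^{(i)}$ and the monodromy of $\cL$ collected along the way is $u^{\ell(\sigma)}$, where $\ell(\sigma)$ is the number of inversions of $\sigma$ — this is exactly where $u$-homogeneity enters, each of the $\ell(\sigma)$ elementary half-twists contributing the fixed unit $u$. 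Summing over $\sigma$ and invoking the classical (Mahonian) identity $\sum_{\sigma\in S_k} u^{\ell(\sigma)} = \prod_{j=1}^{k}[j]_u = [k]_u!$ gives the local relation. The delicate point — and, I expect, the real content of the argument — is the bookkeeping in this last step: one must check properness throughout all the isotopies so as to stay within Borel--Moore chains, and verify that the orientation signs conspire with the monodromy factors so that the total is the unsigned $q$-factorial and not a signed sum. This is the same orientation computation performed, for one point per arc, in the proof of Lemma \ref{L:1}, now carried out with $k$ points simultaneously.

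Reassembling the local relations over the $l$ bundles yields $\tilde\cU_e = \prod_{i=1}^{l}[e_i]_u!\;\cV_e$ in $H^{\lf}_m(\mouter)$, and the relation $\tilde\cG_e = \prod_{i=1}^{l}[e_i]_u!\;\cD_e$ in $H^{\lf}_m(\minner)$ follows by the identical argument with the roles of $\minner$ and $\mouter$ — equivalently, of the two halves of $\partial\Sigma$ — interchanged. As an independent check one could instead compute the coefficient of $\cV_{e'}$ in $\tilde\cU_e$: since $\{\cV_{e'}\}$ is a free basis of $H^{\lf}_m(\mouter)$ by Lemma \ref{L:2} together with Lemma \ref{lem:generating}, pairing with $\cG_{e'}$ through the non-degenerate pairing of Lemma \ref{L:2} reduces the claim to the statement that the intersection number of $\bU_e$ with the support of $\cG_{e'}$ equals $\delta_{e,e'}\prod_i[e_i]_u!$, which is again an inversion-count of the type appearing in Lemma \ref{L:1}.
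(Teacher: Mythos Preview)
Your proposal is correct, and in fact you outline \emph{two} complete arguments: a primary one via subdivision of the cube into $k!$ simplices and proper isotopy onto the merged arc, and a secondary one (your ``independent check'') via the non-degenerate pairing of Lemma~\ref{L:2}. The paper takes only the second route: it expands $\tilde{\cU}_e$ in the basis $\{\cV_{e'}\}$ by computing $\langle \tilde{\cU}_e, \cG_f\rangle$ for each $f$, observes geometrically that this vanishes unless $e=f$, and then evaluates $\langle \tilde{\cU}_e, \cG_e\rangle$ as a local intersection count, obtaining $\prod_i [e_i]_u!$.

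The trade-off is exactly the one you anticipate. Your subdivision argument is more intrinsic --- it produces the relation as an explicit Borel--Moore chain-level identity and makes the appearance of the Mahonian generating function transparent --- but the properness verification you flag as ``the delicate point'' is genuinely delicate: on the simplicial walls $x_a=x_b$ the collapsing isotopy sends points to collisions, i.e.\ to infinity in $\mouter$, so one must argue carefully that the resulting chain-level homotopy is locally finite (equivalently, that it extends over a suitable compactification). The paper's pairing approach sidesteps this entirely, replacing the isotopy by a transverse-intersection count between $\bU_e$ and $\bG_e$, which is finite and manifestly well-defined; the same $q$-factorial then emerges from summing monodromies over the $\prod_i (e_i)!$ geometric intersection points. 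Since you already have Lemmas~\ref{L:2} and~\ref{lem:generating} available, the pairing route is the more economical one to write up.
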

\begin{proof}
Let us fix $e \in E_{n-1+k+2g,m}$.  In order to prove the first relation presented above, we will use the intersection form from Lemma \ref{L:2} given by:
\begin{equation}
\begin{aligned}
\pairing &\colon H^{\lf}_m(\mouter) \otimes H_m^{\partial}(\mouter) \longrightarrow R\\
& \qquad\quad \langle \cV_e, \cG_f \rangle= \delta_{e,f}.
\end{aligned}
\end{equation}
Since this form has the identity matrix on the bases $\{ \cV_s \}_{s\in E_{n-1+k+2g,m}}$ and $\{ \cG_f \}_{f\in E_{n-1+k+2g,m}}$, we will investigate the intersections:
$$  \langle \tilde{\cU}_e, \cG_f \rangle, \text { for all } f\in E_{n-1+k+2g,m}.$$
By a similar geometric argument as the one presented in the proof of Lemma \ref{L:2}, we notice that if $ \langle \tilde{\cU}_e, \cG_f \rangle \neq 0$, it means that we have at least one intersection point between $\bU_e$ and $\bG_f$ in the configuration space and this forces that the partitions coincide, meaning that $e=f$. 

Now, it remains to compute the intersection $\langle \tilde{\cU}_e, \cG_e \rangle $. We recall (see Remark \ref{rmk:choice}) that the intersection form $\pairing$ depends on a choice of a collection of paths from $\bouter$ to each component of $\binner$. The value of the intersection form is a sum of terms that are indexed by the intersection points of $\bU_e$ and $\bG_e$, with coefficients that are evaluations of the local system on certain loops, based at the boundary and passing through these intersection points, which are constructed using these chosen paths between $\bouter$ and $\binner$.
Having this in mind, it is enough to compute such intersections locally, as in the picture:

\begin{figure}[H]
\centering
\includegraphics[scale=0.3]{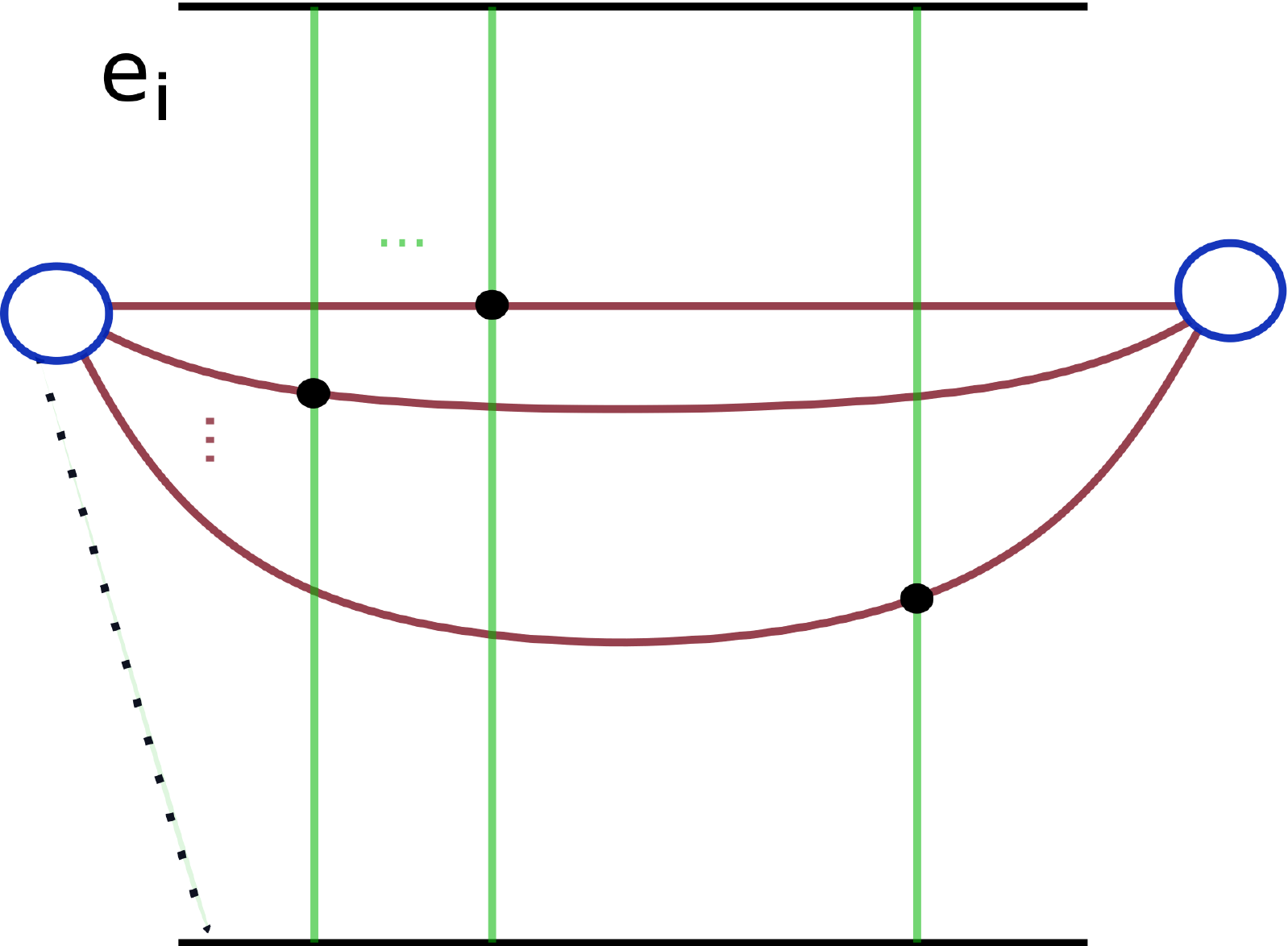}
\caption{Local intersection}
\label{Picture}
\end{figure}

We see that geometrically we have $(e_i)!$ intersection points, and each time, the corresponding path will be evaluated by the local system to a power of the variable $u$. (Recall that we are assuming that the local system is $u$-homogeneous.) Inductively, we can see that the scalar that we get is exactly $$[e_i]_u!.$$
Doing this locally for all $i \in \{1,..,n-1+k+2g\}$, we conclude that:
$$\langle \tilde{\cU}_e, \cG_e \rangle =\prod_{i=1}^{n-1+k+2g}[e_i]_u! $$ and so the first relation from the statement holds.

The second relation follows in an analogous manner, using the pairing from Lemma \ref{L:1}.
\end{proof}

\section{Genericity}\label{genericity}

In this section we sketch the proof of Proposition \ref{thm-genericity}. As remarked in \S\ref{results}, this is essentially due to T.\ Kohno, and our proof is just a mild extension of his, allowing more general (orientable) surfaces and local systems over any unital ring $R$ (rather than $\bC$). We will therefore just sketch how to adapt his proof from \cite[Theorem 3.1]{Kohno2017}.

Recall that we are assuming that the local system $\cL$ on $\minner$ is \emph{generic} (for $\minner$), which means that whenever $\gamma$ is an unbased loop in $\minner$ that may be homotoped to be disjoint from any compact subset, and we write $m_\gamma \in R^\times$ for the monodromy of $\cL$ around $\gamma$, we have
\begin{equation}
\label{eq:genericity}
1 - m_\gamma \in R^\times ,
\end{equation}
in other words, $1 - m_\gamma$ is a \emph{unit} of $R$.

\begin{proof}[Sketch of Proposition \ref{thm-genericity}]
Let $\mathring{M} = C_m(\mathrm{int}(\Sigma))$, which may be seen as a complex affine variety. We may write $\mathring{M} = X \smallsetminus D$, where $X$ is a complex projective variety and $D = D_1 \cup \ldots \cup D_r$ is a union of finitely many normal crossing divisors. Moreover, we may arrange that passing from the space $\mathring{M} = C_m(\mathrm{int}(\Sigma))$ to the larger space $\minner = C_m(\mathrm{int}(\Sigma) \cup \mathrm{int}(\binner))$ corresponds to taking the union with a subset of these divisors, so we still have $\minner = X \smallsetminus D'$, where $D' = D_1 \cup \ldots \cup D_s$ for some $s \leq r$. Let $\gamma_i \subset \minner$ be the normal loop in $X$ of the divisor $D_i$ for $i \in \{1,\ldots,s\}$.

The proof of \cite[Theorem 3.1]{Kohno2017} then goes through to prove that the natural homomorphism $H_m(\minner ; \cL) \to H_m^{\lf}(\minner ; \cL)$ is an isomorphism, as long as, in our setting, the local system $\cL$ satisfies the key property (5) on page 127 of \cite{Kohno2017}. Just as in \cite{Kohno2017}, this follows as long as the monodromy $m_{\gamma_i} \in R^\times$ of the local system around each divisor $D_i$ satisfies the following: if we take $\cL_i$ to be the local system (defined over $R$) on $\bC^* = \bC \smallsetminus \{0\}$ whose monodromy around a generator of $\pi_1(\bC^*)$ is $m_{\gamma_i}$, then we have
\[
H^q(\bC^* ; \cL_i) = 0
\]
for all $q>0$. We therefore just have to verify this property.

We may deformation retract $\bC^*$ onto a cell complex with one $0$-cell and one $1$-cell, so its cellular cochain complex, with respect to the local system $\cL_i$, is
\[
R \too R,
\]
where the first $R$ is in degree $0$, the second $R$ is in degree $1$ and the differential is multiplication by $1 - m_{\gamma_i}$. We have to show that its cohomology in degree $1$ is trivial, in other words, that
\[
R / \langle 1 - m_{\gamma_i} \rangle \;=\; 0.
\]
But the normal loop $\gamma_i$ may be homotoped to be arbitrarily close to the divisor $D_i$, and hence to be disjoint from any given compact subspace of $\minner$. Thus, by our assumption, $1 - m_{\gamma_i}$ is a unit of the ring $R$, so the ideal $\langle 1 - m_{\gamma_i} \rangle$ is equal to $R$ and we are done.
\end{proof}

\section{Locally-finite homology of infinite coverings}\label{coverings}

In \S\ref{Shapiro} we first recall \emph{Shapiro's lemma for the homology of covering spaces} and we discuss its failure for locally-finite homology in \S\ref{Shapiro-lf}. We then prove Theorem \ref{thm-coverings} in \S\ref{proof-Theorem-E} in two parts: Theorem \ref{thm-lf-basis} describes the locally-finite homology of the base space $\minner$ and Theorem \ref{thm-lf-basis-covering} describes the locally-finite homology of the covering $(\minner)^\varphi$, as well as the natural map between them.

\subsection{Shapiro's lemma for covering spaces.}\label{Shapiro}

Let $X$ be a path-connected, based space admitting a universal cover and let $Y \to X$ be any regular covering of $X$, corresponding to a surjection $\pi_1(X) \to G$. Denote by $\cL$ the rank-1 local system $k[Y] \to X$, defined over $k[G]$, given by taking free $k$-modules fibrewise. There is then a naturally-defined isomorphism
\begin{equation}
\label{eq:Shapiro}
H_i(X;\cL) = H_i(X;k[G]) \too H_i(Y;k)
\end{equation}
for any $i\geq 0$. This may be proven directly from the definitions, as follows. The left-hand side is, by definition, the $i$th homology group of the chain complex
\[
C_*(\widetilde{X};k) \otimes_{k[\pi_1(X)]} k[G],
\]
where $\widetilde{X}$ is the universal covering of $X$ and $C_i(Z;k) = k\{\Delta^i \to Z\}$ is the free $k$-module generated by the set of singular $i$-simplices in $Z$, with the differential defined, as usual, as an alternating sum all ways of forgetting one vertex. Now we note that, for any regular covering $Z \to X$ corresponding to a quotient $\pi_1(X) \to H$, there is a natural decomposition
\[
C_i(Z;k) = k\{\Delta^i \to Z\} \cong k[H]\{\Delta^i \to X\}.
\]
We therefore have isomorphisms (which are compatible with the differentials):
\begin{align*}
C_i(\widetilde{X};k) \otimes_{k[\pi_1(X)]} k[G] &\; = \; k\{\Delta^i \to \widetilde{X}\} \otimes_{k[\pi_1(X)]} k[G] \\
&\;\cong\; k[\pi_1(X)]\{\Delta^i \to X\}  \otimes_{k[\pi_1(X)]} k[G] \\
&\;\cong\; k[G]\{\Delta^i \to X\} \\
&\;\cong\; k\{\Delta^i \to Y\} = C_i(Y;k),
\end{align*}
from which Shapiro's lemma \eqref{eq:Shapiro} follows.

A similar argument also shows that, if $A$ is any subpace of $X$, there is an isomorphism
\begin{equation}
\label{eq:Shapiro-relative}
H_i(X,A;\cL) = H_i(X,A;k[G]) \too H_i(Y,Y|_A;k),
\end{equation}
where $Y|_A = p^{-1}(A)$ if we denote the covering by $p \colon Y \to X$.

\subsection{Locally-finite homology.}\label{Shapiro-lf}
If we replace ordinary homology with locally-finite (i.e.~Borel-Moore) homology, Shapiro's lemma is false in general (unless the covering is finitely-sheeted). However, there is still a well-defined \emph{homomorphism}
\begin{equation}
\label{eq:Shapiro-BM}
H_i^{\lf}(X;\cL) = H_i^{\lf}(X;k[G]) \too H_i^{\lf}(Y;k),
\end{equation}
which we now define.

\begin{defn}
Let $C_i^{\lf}(X;k)$ be the $k$-module given by formal sums $\sum_r \lambda_r f_r$ where $\lambda_r \in k$ and $f_r \colon \Delta^i \to X$ is a continuous map, satisfying
\begin{equation}
\label{eq:locally-finite-condition}
\text{each } x \in X \text{ has an open neighbourhood } U \text{ that intersects only finitely many } f_r(\Delta^i).
\end{equation}
There are obvious differentials, given by forgetting vertices, making $C_*^{\lf}(X;k)$ into a chain complex.
\end{defn}

\begin{defn}
For a covering $Z \to X$, define $C_i^{\lfb}(Z;k)$ to be the $k$-module given by formal sums $\sum_r \lambda_r f_r$ where $\lambda_r \in k$ and $f_r \colon \Delta^i \to Z$ is a continuous map, satisfying
\begin{equation}
\label{eq:locally-finite-base-condition}
\text{each } x \in X \text{ has an open neighbourhood } U \text{ where } Z|_U \text{ intersects only finitely many } f_r(\Delta^i).
\end{equation}
There are obvious differentials, given by forgetting vertices, making $C_*^{\lfb}(Z;k)$ into a chain complex.
\end{defn}

\begin{rmk}\label{rmk:subcomplex}
Note that $C_*^{\lfb}(Z;k)$ is a subcomplex of $C_*^{\lf}(Z;k)$, with equality if and only if $Z \to X$ is a finitely-sheeted covering.
\end{rmk}

\begin{rmk}\label{rmk:twisted-lf-homology}
Recall that the untwisted homology $H_i^{\lf}(X;k)$ is -- by definition -- the $i$th homology group of $C_*^{\lf}(X;k)$. Analogously, one may check from the definitions that the twisted homology $H_i^{\lf}(X;k[G])$ is the $i$th homology group of $C_*^{\lfb}(\widetilde{X};k) \otimes_{k[\pi_1(X)]} k[G]$, where $\widetilde{X}$ is the universal cover of $X$.
\end{rmk}

\begin{defn}
\label{def-base-to-covering}
There is a chain map
\[
C_*^{\lfb}(\widetilde{X};k) \otimes_{k[\pi_1(X)]} k[G] \too C_*^{\lfb}(Y;k)
\]
defined, for $f \colon \Delta^i \to \widetilde{X}$ and $g \in G$, by
\[
f \otimes g \longmapsto (\pi \circ f) . g,
\]
where $\pi \colon \widetilde{X} \to Y$ is the projection of the universal cover of $X$ onto the intermediate cover $Y \to X$ and $.$ denotes the natural action of the deck transformation group $G$ of $Y \to X$ on the set of singular $i$-simplices in $Y$. In fact this chain map is an isomorphism, although we will not need this. Composing with the inclusion of $C_*^{\lfb}(Y;k)$ into $C_*^{\lf}(Y;k)$ (see Remark \ref{rmk:subcomplex}) and taking homology, we obtain a homomorphism
\[
H_i^{\lf}(X;k[G]) \too H_i^{\lf}(Y;k),
\]
by Remark \ref{rmk:twisted-lf-homology}. This is the definition of the homomorphism \eqref{eq:Shapiro-BM}.
\end{defn}

\subsection{The proof of Theorem \ref{thm-coverings}.}\label{proof-Theorem-E}

Recall that we have fixed a surjective homomorphism
\[
\varphi \colon \pi_1(M) \too G,
\]
where $M = C_m(\Sigma)$, and we denote the corresponding regular covering by $M^\varphi \to M$, whose deck transformation group is naturally isomorphic to $G$. Denote its restrictions to $\minner$ and $\mouter$ by $(\minner)^\varphi$ and $(\mouter)^\varphi$ respectively. We also choose a unital ring $k$ and set $R=k[G]$. Taking free $k$-modules fibrewise in the covering, we obtain a bundle of $R$-modules
\[
k[M^\varphi] \too M,
\]
which is a rank-1 local system on $M$ defined over $R$. Denote this local system by $\cL$. By abuse of notation, we also denote its restrictions to $\minner \subseteq M$ and $\mouter \subseteq M$ by $\cL$. Taking $X=\minner$ and $X=\mouter$ in Definition \ref{def-base-to-covering}, we have homomorphisms
\begin{equation}\label{eq:base-to-covering}
\begin{aligned}
H_m^{\lf}(\minner ; \cL) &\too H_m^{\lf}((\minner)^\varphi ; k) \\
H_m^{\lf}(\mouter ; \cL) &\too H_m^{\lf}((\mouter)^\varphi ; k).
\end{aligned}
\end{equation}
Theorem \ref{thm-coverings} consists of an explicit description of these homomorphisms, including their domains and co-domains.

We will prove this theorem under a mild assumption on the decomposition of the boundary of $\Sigma$ into $\binner$ and $\bouter$, namely Assumption \ref{assumption-boundary}. It is easy to remove this assumption; the description of the explicit bases is just a little more fiddly to describe.

\begin{thm}[First part of Theorem \ref{thm-coverings}]
\label{thm-lf-basis}
The locally-finite homology groups $H_m^{\lf}(\minner ; \cL)$ and $H_m^{\lf}(\mouter ; \cL)$ are free $R$-modules generated by the sets of elements $\cB_{\mathrm{in}}$ and $\cB_{\mathrm{out}}$ described in Definitions \ref{def:bin} and \ref{def:bout}.
\end{thm}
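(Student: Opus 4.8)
By Lemma \ref{L:1} the classes $\cD_f$, $f\in E_{n-1+k+2g,m}$, are already $R$-linearly independent, so all that is left is to show that they \emph{generate} $H_m^{\lf}(\minner;\cL)$ (and, symmetrically, that $\cB_{\mathrm{out}}$ generates $H_m^{\lf}(\mouter;\cL)$). The plan is to compute $H_m^{\lf}(\minner;\cL)$ directly from a Morse-theoretic/handle decomposition of the base surface $S:=\Sigma\smallsetminus\bouter$ adapted to the green arcs, working under Assumption \ref{assumption-boundary}. One useful \emph{a priori} bound: $\mathrm{int}(\minner)=C_m(\mathrm{int}(\Sigma))$ is a smooth affine complex variety of complex dimension $m$ (a configuration space of an affine algebraic curve), so Artin vanishing gives $H^{j}_c(\mathrm{int}(\minner);\cL)=0$ for $j<m$; via the Poincar\'e--Lefschetz isomorphism appearing in the proof of Theorem \ref{T:1} (together with $H^{j}(\minner,\partial\minner;\cL)\cong H^{j}_c(\mathrm{int}(\minner);\cL)$) this yields $H_j^{\lf}(\minner;\cL)=0$ for $j>m$, which constrains the spectral sequence below and serves as a consistency check.

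Draw $\Sigma$ as in Figure \ref{fig-basis-in} and thicken the $\ell=n-1+k+2g$ disjoint green arcs $\gamma_1,\dots,\gamma_\ell$ to disjoint thin product bands $N_1,\dots,N_\ell$, with complementary ``outer part'' $P=S\smallsetminus(\overline{N_1}\cup\dots\cup\overline{N_\ell})$. The crucial structural feature is that $P$ is a collar of $\bouter$: it deformation retracts, \emph{properly}, onto $\bouter$, so that $S$ is properly homotopy equivalent to the ribbon graph spanned by the $\gamma_i$ together with arcs running out to the $\bouter$-end. (Equivalently, fix a proper Morse function $h\colon S\to[0,\infty)$ with $h^{-1}(\infty)=\bouter$ whose critical points are of index $1$ with descending manifolds the $\gamma_i$.) This induces a filtration of $\minner=C_m(S)$ by the number of configuration points lying near $\gamma_1\cup\dots\cup\gamma_\ell$. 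In the spectral sequence of this filtration, the terms involving configurations with a point in the outer collar $P$ are killed --- this is where properness of the retraction onto $\bouter$ enters, allowing locally-finite chains to be pushed off toward the $\bouter$-end --- and what remains splits over $e=(e_1,\dots,e_\ell)\in E_{n-1+k+2g,m}$, with the $e$-term accounting for configurations having $e_i$ points on each $\gamma_i$.

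For each such $e$, the contribution of the arc $\gamma_i$ carrying $e_i$ points --- the band being thin --- is free of rank $1$ concentrated in homological degree $e_i$: the configuration space $C_{e_i}(\gamma_i)$ is an open $e_i$-cell, hence contractible, so its locally-finite homology with the restricted local system is $R$, in degree $e_i$ only. Since $\sum_i e_i=m$, the $e$-term of the (reduced) $E^1$-page is free of rank $1$ in total degree $m$, generated by the locally-finite fundamental class of $\bD_e\cong\prod_i C_{e_i}(\gamma_i)$, which is exactly $\cD_e$. Thus what remains is concentrated in total degree $m$: the spectral sequence degenerates there, and since each graded quotient is free the induced filtration of $H_m^{\lf}(\minner;\cL)$ splits, so $H_m^{\lf}(\minner;\cL)\cong\bigoplus_{e\in E_{n-1+k+2g,m}}R$ with $\cB_{\mathrm{in}}=\{\cD_f\}$ as a free basis. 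The statement for $\mouter$ and $\cB_{\mathrm{out}}$ follows from the identical argument with the roles of $\binner$ and $\bouter$ (hence of the green and red arcs of Figure \ref{fig-basis-out}) interchanged.

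The main obstacle is the geometric bookkeeping hidden in the middle paragraph: making precise the ``pushing to the $\bouter$-end'' so that exactly the band strata survive; handling the collision conditions (points of distinct bands approaching their shared endpoints on $\binner$, and configurations crossing the band walls $\partial N_i$) so that the filtration and its associated graded are genuinely as described; and verifying that a thin band carrying $e_i$ points really contributes only $R$ in degree $e_i$ of twisted locally-finite homology. The remaining ingredients --- the spectral sequence, its degeneration, the splitting of the filtration, and the Artin-type top-vanishing as a cross-check --- are then formal.
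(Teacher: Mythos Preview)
Your strategy is sound and close in spirit to the paper's, but the packaging differs. Both arguments rest on the same geometric fact: the surface $S=\Sigma\smallsetminus\bouter$ deformation-retracts onto the union $\Gamma$ of (a neighbourhood of) $\bouter$ and the green arcs, and this retraction should induce an isomorphism on twisted locally-finite homology of the configuration spaces. You encode this via a filtration and a spectral sequence; the paper instead works directly with pairs.

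Concretely, the paper writes $H^{\lf}_*(\minner;\cL)\cong\varprojlim_{\epsilon\to0} H_*(M,M_\epsilon;\cL)$, where $M_\epsilon$ is the open set of configurations in which two points are $\epsilon$-close or some point is $\epsilon$-close to $\bouter$, and similarly for the subspace $M_{\mathrm{green}}$ of configurations lying on the green arcs. It then shows that $(M_{\mathrm{green}},M_{\mathrm{green}}\cap M_\epsilon)\hookrightarrow(M,M_\epsilon)$ is a relative-homology isomorphism for each small $\epsilon$, by choosing a strong deformation retraction $\{h_t\}$ of $\Sigma$ onto $\Gamma$ that is \emph{non-expanding} (so $M_\epsilon$ is carried into itself) and an embedding for $t>0$. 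The delicate point --- exactly the ``collision'' issue you flag --- is that $h_0$ is not injective, so configuration points may collide at $t=0$. The paper handles this with a $\delta$-trick: pick $\delta$ so that no surface point moves more than $\epsilon/2$ during $[0,\delta]$; then the subspace $\cN'_\delta\subset\cN_\delta$ of configurations whose $h_0\circ h_\delta^{-1}$-images remain distinct differs from $\cN_\delta$ only inside $M_\epsilon$, and an excision argument bridges $t=\delta$ to $t=0$.

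This is precisely the ``geometric bookkeeping'' you identify as the main obstacle. Your spectral-sequence formulation is a legitimate alternative, but to make it rigorous you must confront the same issues: ensuring the filtration interacts correctly with locally-finite chains (your ``pushing to the $\bouter$-end''), and controlling points in distinct bands approaching shared endpoints on $\binner$ (your ``collision conditions''). The paper's direct approach trades the spectral-sequence formalism for an explicit $\epsilon$--$\delta$ argument; your approach does the reverse but leaves the same geometry to be supplied. The Artin-vanishing cross-check is a pleasant observation but is not used in either argument.
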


\begin{rmk}
Theorem \ref{thm-lf-basis} in fact holds for any rank-1 local system on $M$, not only those arising from a regular covering of $M$. We use it in this generality for the proof of Theorem \ref{thm-pairings} in \S\ref{pairings} above (see Lemma \ref{lem:generating}).
\end{rmk}

\begin{thm}[Second part of Theorem \ref{thm-coverings}]
\label{thm-lf-basis-covering}
There are isomorphisms of $k[G]$-modules
\[
H_m^{\lf}((\minner)^\varphi ; k) \;\cong\; \bigoplus_{\cB_{\mathrm{in}}} \prod_{g \in G} k
\qquad\text{and}\qquad
H_m^{\lf}((\mouter)^\varphi ; k) \;\cong\; \bigoplus_{\cB_{\mathrm{out}}} \prod_{g \in G} k.
\]
Under these identifications, the homomorphisms \eqref{eq:base-to-covering} are given by the natural inclusions
\[
H_m^{\lf}(\minner ; \cL) \;\cong\; \bigoplus_{\cB_{\mathrm{in}}} k[G] \; = \; \bigoplus_{\cB_{\mathrm{in}}} \bigoplus_{g \in G} k \;\longhookrightarrow\; \bigoplus_{\cB_{\mathrm{in}}} \prod_{g \in G} k,
\]
and similarly for $\mouter$. In particular they are injective, and bijective if and only if $G$ is finite.
\end{thm}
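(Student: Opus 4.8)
The plan is to compute the locally-finite homology of the covering space $(\minner)^\varphi$ by cutting $\minner$ along the green arcs used to define the classes $\cD_f$, exactly as in the proof of Theorem \ref{thm-lf-basis}, but now keeping track of the $G$-action. Concretely, the green arcs of Definition \ref{def:bin} are chosen so that the complement $\minner \setminus (\text{green arcs})$ deformation retracts onto $\partial\minner$, and this produces, via the long exact sequence of the pair (together with Poincar\'e--Lefschetz duality as in the proof of Theorem \ref{T:1}), a description of $H_m^{\lf}(\minner;\cL)$ as the free $R$-module on the classes $\cD_f$. I would run the same argument upstairs: pull everything back along $M^\varphi \to M$, restrict to $(\minner)^\varphi$, and use the cellular/CW model of $\minner$ adapted to the green arcs to get a chain complex computing $H_m^{\lf}((\minner)^\varphi;k)$ whose relevant piece is a free $k$-module on the set of lifts of the cells dual to the $\cD_f$. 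Since $M^\varphi \to M$ is a regular covering with deck group $G$, the set of lifts of each such cell is a $G$-torsor.

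The key point is then the distinction between $C_*^{\lf}$ and $C_*^{\lfb}$ from \S\ref{Shapiro-lf}. By Remark \ref{rmk:twisted-lf-homology}, the twisted group $H_m^{\lf}(\minner;\cL)$ is computed using chains that are locally finite \emph{when measured downstairs on $\minner$}, i.e.\ using $C_*^{\lfb}$ of the cover; a basis element $\cD_f$ lifts to a single $k[G] = \bigoplus_{g\in G} k$ worth of chains (one lift per deck transformation, and local finiteness downstairs is automatic since each lift maps homeomorphically to a cell of $\minner$). In contrast, $H_m^{\lf}((\minner)^\varphi;k)$ uses $C_*^{\lf}$ of the cover, i.e.\ chains that are locally finite \emph{upstairs}; since the deck action of $G$ is properly discontinuous, an infinite $k$-linear combination $\sum_{g\in G}\lambda_g\,(g\cdot\sigma_f)$ of lifts of a fixed cell is still locally finite upstairs, so the corresponding module of chains is $\prod_{g\in G} k = k[[G]]$ rather than $\bigoplus_{g\in G} k$. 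Assembling over the index set $\cB_{\mathrm{in}}$ of green-arc partitions, and checking that the duality and boundary-collapsing arguments of Theorem \ref{thm-lf-basis} go through verbatim upstairs (they only use that $(\minner)^\varphi$ is an orientable $2m$-manifold and that the pulled-back green arcs still properly separate it), gives the isomorphism $H_m^{\lf}((\minner)^\varphi;k) \cong \bigoplus_{\cB_{\mathrm{in}}} \prod_{g\in G} k$. The chain-level map of Definition \ref{def-base-to-covering} sends $\cD_f \otimes g$ to $g\cdot\sigma_f$, which under these identifications is exactly the componentwise inclusion $\bigoplus_{g} k \hookrightarrow \prod_g k$ in each $\cB_{\mathrm{in}}$-summand; injectivity is immediate, and bijectivity holds iff $\bigoplus_g k = \prod_g k$, i.e.\ iff $G$ is finite. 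The statement for $\mouter$ is identical, using the red arcs and $\cB_{\mathrm{out}}$ in place of the green arcs and $\cB_{\mathrm{in}}$.

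The main obstacle I anticipate is making precise the claim that the proof of Theorem \ref{thm-lf-basis} ``goes through upstairs'': one must verify that the green arcs can be chosen so that their preimages in $(\minner)^\varphi$ still give a proper, locally-finite family of codimension-one submanifolds whose complement deformation retracts onto (a neighbourhood of) $\partial(\minner)^\varphi$, equivariantly for $G$, and that the Poincar\'e--Lefschetz duality and excision steps are compatible with the $G$-action so that the resulting identification of $H_m^{\lf}((\minner)^\varphi;k)$ is as a $k[G]$-module, not merely as a $k$-module. Since $(\minner)^\varphi \to \minner$ is a covering, properness and local finiteness of the preimages are automatic, and equivariance is built into the construction, so this is a matter of careful bookkeeping rather than a genuinely new difficulty; the real content is entirely the $C^{\lf}$ versus $C^{\lfb}$ dichotomy, which is where the completion $k[[G]]$ enters.
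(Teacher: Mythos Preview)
Your conceptual core is right and matches the paper: the whole content is the $C^{\lf}$ versus $C^{\lfb}$ dichotomy, which on the green submanifold $(M_{\mathrm{green}})^\varphi$ (a disjoint union of open $m$-balls indexed by $\cB_{\mathrm{in}}\times G$) manifests as the difference between $\prod_G k$ and $\bigoplus_G k$. The identification of the map \eqref{eq:base-to-covering} with the inclusion $\bigoplus_G k \hookrightarrow \prod_G k$ is also exactly what the paper obtains.

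However, your claim that ``the duality and boundary-collapsing arguments of Theorem~\ref{thm-lf-basis} go through verbatim upstairs'' glosses over the one genuinely new difficulty, and your description of the proof of Theorem~\ref{thm-lf-basis} is not quite what the paper does. The paper does \emph{not} use a long exact sequence or Poincar\'e--Lefschetz duality to prove Theorem~\ref{thm-lf-basis}; it writes $H^{\lf}_*(\minner)$ as an inverse limit $\varprojlim_{\epsilon\to 0} H_*(M,M_\epsilon)$ over a cofinal family of compact subspaces $M\smallsetminus M_\epsilon$, and then uses a carefully chosen deformation retraction of $\Sigma$ onto $\Gamma$ together with excision. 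When you lift this to the covering, the family $\{M^\varphi\smallsetminus M_\epsilon^\varphi\}$ is \emph{not} cofinal among compact subsets of $(\minner)^\varphi$, because these subspaces are non-compact whenever $G$ is infinite (they are full preimages of compacta). The paper therefore has to introduce a second, ``vertical'' family: choosing a fundamental domain $D\subseteq M^\varphi$ and setting $L_S = M^\varphi\smallsetminus\bigcup_{g\in S}\bar D.g$ for finite $S\subseteq G$, one obtains a genuinely cofinal family $\{M^\varphi\smallsetminus(L_S\cup M_\epsilon^\varphi)\}$. The deformation retraction and excision steps must then be checked to respect the enlarged subspace $L_S\cup M_\epsilon^\varphi$, and in particular the fundamental domain $D$ must be chosen compatibly with the lifted deformation retraction so that $L_S$ is carried into itself. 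This is the ``careful bookkeeping'' you allude to, but it is not automatic from equivariance alone and is where the actual work lies; your proposal would be complete once you address it.
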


\begin{rmk}
The idea of the proof of Theorem \ref{thm-lf-basis} is essentially due to Bigelow~\cite[Lemma 3.1]{Bigelow2004HomologicalrepresentationsIwahori}, who proved it in the setting of the $n$-punctured disc (Example \ref{eg:1}). It has also been extended to more general surfaces by An-Ko~\cite[Lemma 3.3]{AnKo2010}.\footnote{We note that the statement of \cite[Lemma 3.3]{AnKo2010} is not quite correct as stated: instead of the locally-finite (Borel-Moore) homology of the covering space (as stated there), their proof applies to the locally-finite homology of the base space twisted by the corresponding local system. Indeed, as we show in Theorem \ref{thm-lf-basis-covering}, the locally-finite homology of the covering space is not a free $k[G]$-module, unless $G$ is finite, but rather a finite direct sum of copies of a certain completion of $k[G]$.} Theorem \ref{thm-lf-basis} above is a further extension, allowing general local systems (not necessarily arising from a covering) and any partition of the boundary of the surface. Moreover, we give a full, detailed proof, which may be readily adapted to prove similar results for more general configuration spaces (of points, or other submanifolds) in an ambient manifold. The proof of Theorem \ref{thm-lf-basis-covering} follows from the observation that one may apply similar ideas directly to the covering spaces $(\minner)^\varphi$ and $(\mouter)^\varphi$, as long as one is careful to take account of the non-compactness in the fibre direction.
\end{rmk}

\begin{proof}[Proof of Theorem \ref{thm-lf-basis}]
Fix a metric $d$ on the surface $\Sigma$. For $\epsilon > 0$, let $M_\epsilon$ be the subspace of $M$ consisting of configurations $\{p_1,\ldots,p_m\}$ such that either $d(p_i,p_j) < \epsilon$ for some $i\neq j$ or $d(p_i,b) < \epsilon$ for some $i$ and some $b \in \bouter$. Note that
\[
\{ M \smallsetminus M_\epsilon \mid \epsilon > 0 \}
\]
is a family of compact subspaces of $\minner$ that is \emph{cofinal}, meaning that for every compact subspace $K \subset \minner$ there is an $\epsilon > 0$ such that $K \subseteq M \smallsetminus M_\epsilon$. We therefore have
\begin{equation}
\label{eq:limit-1}
H_*^{\lf}(\minner) \;\cong\; \underset{\epsilon\to 0}{\mathrm{lim}} \, H_*(M,M_\epsilon).
\end{equation}
Now consider Figure \ref{fig-basis-in} and denote by $M_{\mathrm{green}}$ the subspace of $\minner$ consisting of configurations $\{p_1,\ldots,p_m\}$ such that each $p_i$ lies on one of the open green intervals. By a similar observation as above, we have
\begin{equation}
\label{eq:limit-2}
H_*^{\lf}(M_{\mathrm{green}}) \;\cong\; \underset{\epsilon\to 0}{\mathrm{lim}} \, H_*(M_{\mathrm{green}} , M_{\mathrm{green}} \cap M_\epsilon).
\end{equation}
Moreover, the space $M_{\mathrm{green}}$ is homeomorphic to the disjoint union of a collection of open $m$-discs, with the disjoint union taken over the set of all possible partitions of the number $m$ indexed by the set of green arcs. Also, the local system $\cL$ restricted to $M_{\mathrm{green}}$ must necessarily be trivial, since its path-components are contractible. Thus, the locally-finite homology of $M_{\mathrm{green}}$ in degree $m$ is the direct sum of one copy of the ground ring $R$ for each partition of $m$ indexed by the set of green arcs in Figure \ref{fig-basis-in}. In other words, it is the free $R$-module generated by this set. This is exactly what we would like to show that $H_m^{\lf}(\minner)$ is isomorphic to, so it will suffice to show that the homomorphism
\[
H_*^{\lf}(M_{\mathrm{green}}) \too H_*^{\lf}(\minner)
\]
induced by the proper embedding $M_{\mathrm{green}} \hookrightarrow \minner$ is an isomorphism. By \eqref{eq:limit-1} and \eqref{eq:limit-2} this will follow as long as we show that, for each sufficiently small $\epsilon > 0$, the inclusion
\begin{equation}
\label{eq:inclusion-1}
(M_{\mathrm{green}} , M_{\mathrm{green}} \cap M_\epsilon) \too (M , M_\epsilon)
\end{equation}
induces isomorphisms on relative homology. The rest of the proof will consist in establishing this fact.

Let us consider again Figure \ref{fig-basis-in} and denote by $\Gamma$ the union of $\bouter$ (the black part of $\partial \Sigma$) and the embedded green arcs in $\Sigma$. (The blue part of $\partial\Sigma$ in Figure \ref{fig-basis-in} is $\binner \subseteq \partial \Sigma$, and the red arcs should be ignored.) Let $M_\Gamma$ denote the subspace of $M$ consisting of configurations $\{p_1,\ldots,p_m\}$ such that each $p_i$ lies in $\Gamma \subseteq \Sigma$. Our first observation is that, for any $\epsilon > 0$, the pair
\[
\{ M_{\mathrm{green}} \, , \, M_\Gamma \cap M_\epsilon \}
\]
is an excisive covering of $M_\Gamma$, meaning that they are both open subspaces and their union is all of $M_\Gamma$. By excision, this means that the inclusion
\begin{equation}
\label{eq:inclusion-2}
(M_{\mathrm{green}} , M_{\mathrm{green}} \cap M_\epsilon) \too (M_\Gamma , M_\Gamma \cap M_\epsilon)
\end{equation}
induces isomorphisms on relative homology.

Next, we choose a strong deformation retraction of $\Sigma$ onto $\Gamma$ with some extra properties. Precisely, let
\[
h_t \colon \Sigma \too \Sigma \qquad\text{for } t \in [0,1]
\]
be a homotopy such that
\begin{itemizeb}
\item $h_1 = \mathrm{id}$
\item $h_t$ restricts to the identity on $\Gamma$
\item $\mathrm{im}(h_0) = \Gamma$
\item $h_t$ is non-expanding (in the sense that $d(x,y) \geq d(h_t(x),h_t(y))$ for all $x,y \in \Sigma$)
\item $h_t$ is an embedding for all $t>0$.
\end{itemizeb}
(The first three properties say that $\{h_t\}$ is a strong deformation retraction; the last two are the additional geometric properties that we will need.)

Denote by $\cN_t$ the subspace of $M$ consisting of configurations $\{p_1,\ldots,p_m\}$ where each $p_i$ lies in $\mathrm{im}(h_t)$. In particular, we have $\cN_0 = M_\Gamma$. We want to show that \eqref{eq:inclusion-1} induces isomorphisms on relative homology, and we know that \eqref{eq:inclusion-2} induces isomorphisms on relative homology, so by 2-out-of-3 it suffices to show that
\begin{equation}
\label{eq:inclusion-3}
(\cN_0 , \cN_0 \cap M_\epsilon) \too (M , M_\epsilon)
\end{equation}
induces isomorphisms on relative homology. Now, as an immediate consequence of the properties of the deformation retraction $\{h_t\}$, it induces, for any $t>0$, a deformation retraction of pairs of spaces of $(M,M_\epsilon)$ onto $(\cN_t , \cN_t \cap M_\epsilon)$. Hence the inclusion 
\begin{equation}
\label{eq:inclusion-4}
(\cN_t , \cN_t \cap M_\epsilon) \too (M , M_\epsilon)
\end{equation}
induces isomorphisms on relative homology for all $t>0$. However, we cannot immediately extend this all the way to $t=0$ since $h_0$ is not an embedding (and it cannot be, since $\Sigma$ is not homeomorphic to $\Gamma$), so it cannot induce a deformation retraction of configuration spaces, since two configuration points may collide at time $t=0$. To get around this problem, we use the following device to make it possible to continue the deformation retraction to time $t=0$.

First, by compactness of $\Sigma$, we may choose $\delta > 0$ such that, for every point $p \in \Sigma$, we have
\begin{equation}
\label{eq:diameter-condition}
d(h_\delta(p) , h_0(p)) < \epsilon / 2.
\end{equation}
In other words, no point $p$ of the surface will travel a distance greater than $\epsilon / 2$ during the last $\delta$ seconds of the deformation retraction of $\Sigma$ onto $\Gamma$. Now define $\cN'_\delta$ to be the subspace of $\cN_\delta$ of configurations $\{p_1,\ldots,p_m\}$ satisfying the additional condition that, for all $i\neq j$, we have
\begin{equation}
\label{eq:key-condition}
h_0(h_{\delta}^{-1}(p_i)) \neq h_0(h_{\delta}^{-1}(p_j)).
\end{equation}

We now show that each of the three inclusions
\[
(\cN_0 , \cN_0 \cap M_\epsilon) \too (\cN'_\delta , \cN'_\delta \cap M_\epsilon) \too (\cN_\delta , \cN_\delta \cap M_\epsilon) \too (M , M_\epsilon)
\]
induces isomorphisms on relative homology, which fill finish the proof. The third inclusion admits a deformation retraction induced by the deformation retraction $\{h_t \mid t \in [\delta,1]\}$ (this is a special case of \eqref{eq:inclusion-4}). The first inclusion admits a deformation retraction induced by the deformation retraction $\{ h_t \circ h_{\delta}^{-1} \mid t \in [0,\delta] \}$, where we use the key property \eqref{eq:key-condition} of configurations in $\cN'_{\delta}$ to ensure that this remains well-defined at $t=0$. Finally, to show that the second inclusion induces isomorphisms on relative homology we will show that the pair
\[
\{ \cN'_\delta \, , \, \cN_\delta \cap M_\epsilon \}
\]
is an excisive covering of $\cN_\delta$. It is clear that these are both open subspaces of $\cN_\delta$, so we just have to show that their union covers $\cN_\delta$, which is equivalent to the statement that
\[
\cN_\delta \smallsetminus \cN'_\delta \;\subseteq\; M_\epsilon .
\]
So let $\{p_1,\ldots,p_m\}$ be a configuration in $\cN_\delta \smallsetminus \cN'_\delta$. This means that each $p_i$ lies in $\mathrm{im}(h_\delta)$ and there exist $i \neq j$ such that $h_0(\bar{p}_i) = h_0(\bar{p}_j)$, where we write $\bar{p}_i = h_{\delta}^{-1}(p_i)$ and $\bar{p}_j = h_{\delta}^{-1}(p_j)$. By property \eqref{eq:diameter-condition} and the triangle inequality, we have
\begin{align*}
d(p_i,p_j) &\leq d(p_i,h_0(\bar{p}_i)) + d(h_0(\bar{p}_j),p_j) \\
&= d(h_\delta(\bar{p}_i),h_0(\bar{p}_i)) + d(h_0(\bar{p}_j),h_\delta(\bar{p}_j)) \\
&< \epsilon / 2 + \epsilon / 2 = \epsilon ,
\end{align*}
which implies that $\{p_1,\ldots,p_m\}$ lies in $M_\epsilon$. This completes the proof.
\end{proof}

\begin{rmk}
The deformation retraction $\{h_t\}$ of $\Sigma$ onto $\Gamma$ (the union of the black part of $\partial \Sigma$ and the green embedded arcs in Figure \ref{fig-basis-in}) may be visualised as expanding each of the blue inner boundary-components (and pushing inwards the blue intervals on the outer boundary-component) within their respective regions (bounded by the green and black arcs) of the surface.

The proof of Theorem \ref{thm-lf-basis} in the case of $\mouter$ is exactly analogous to the proof detailed above in the case of $\minner$. In this case $\Gamma$ is the union of the black part of $\partial \Sigma$ and the red embedded arcs in Figure \ref{fig-basis-out}, and the deformation retraction of $\Sigma$ onto this $\Gamma$ may be visualised as compressing the blue part of the outer boundary of the surface onto the union of the red and black arcs and curves.
\end{rmk}

\begin{proof}[Proof of Theorem \ref{thm-lf-basis-covering}]
We will use the notation of the proof of Theorem \ref{thm-lf-basis}. First, recall that $M_{\mathrm{green}} \subseteq \minner$ is the subspace of configurations $\{p_1,\ldots,p_m\}$ where each $p_i$ lies on one of the green open embedded arcs in $\Sigma$ pictured in Figure \ref{fig-basis-in}. As explained in the proof of Theorem \ref{thm-lf-basis}, it is homeomorphic to a disjoint union of open $m$-balls, indexed by $\cB_{\mathrm{in}}$, the set of partitions of the number $m$ indexed by the set of green arcs. Write $(M_{\mathrm{green}})^\varphi$ for the restriction of the covering $M^\varphi \to M$ to $M_{\mathrm{green}}$. Since each path-component of $M_{\mathrm{green}}$ is contractible (in particular simply-connected), $(M_{\mathrm{green}})^\varphi$ is a trivial covering with fibre $G$, so it is a disjoint union of open $m$-balls indexed by $\cB_{\mathrm{in}} \times G$. We therefore have:
\begin{align*}
H_m^{\lf}((M_{\mathrm{green}})^\varphi ; k) &\;\cong\; H_m^{\lf} \left[\, \bigsqcup_{\cB_{\mathrm{in}} \times G} \! \mathring{\bD}^m ; k \right] \\
&\;\cong\; \prod_{\cB_{\mathrm{in}} \times G} \! H_m^{\lf}(\mathring{\bD}^m) ; k) \\
&\;\cong\; \prod_{\cB_{\mathrm{in}} \times G} \! k \;=\; \bigoplus_{\cB_{\mathrm{in}}} \prod_{G} k \;=\; \bigoplus_{\cB_{\mathrm{in}}} k[[G]].
\end{align*}
The second isomorphism uses the fact that locally-finite homology converts arbitrary disjoint unions into products (just like cohomology) and on the third line we use the fact that $\cB_{\mathrm{in}}$ is finite to rewrite a product as a direct sum. It will therefore suffice to prove that the homomorphism
\begin{equation}
\label{eq:inclusion-5}
H_m^{\lf}((M_{\mathrm{green}})^\varphi ; k) \too H_m^{\lf}((\minner)^\varphi ; k)
\end{equation}
induced by the proper embedding $(M_{\mathrm{green}})^\varphi \hookrightarrow (\minner)^\varphi$ is an isomorphism.

Similarly to the proof of Theorem \ref{thm-lf-basis}, we will interpret the locally-finite homology $H_m^{\lf}((\minner)^\varphi ; k)$ as the inverse limit of relative homology groups $H_m(M^\varphi , A ; k)$ as $A$ varies over a family of subspaces having the property that $M^\varphi \smallsetminus A$ forms a cofinal family of compact subspaces of $(\minner)^\varphi$. Write $M_{\epsilon}^\varphi$ for the restriction of the covering $M^\varphi \to M$ to $M_\epsilon$. In contrast to the proof of Theorem \ref{thm-lf-basis}, the family of subspaces
\[
\{ M^\varphi \smallsetminus M_{\epsilon}^\varphi \mid \epsilon > 0 \}
\]
is \emph{not} a cofinal family of compact subspaces of $(\minner)^\varphi$, because $M^\varphi \smallsetminus M_{\epsilon}^\varphi$ is not compact (unless the covering is finitely-sheeted). We therefore have to work a little harder to construct the necessary family of subspaces.

Since $M^\varphi \to M$ is a regular covering (a principal $G$-bundle for discrete $G$), there is a properly discontinuous action of $G$ on $M^\varphi$ and a homeomorphism $M \cong (M^\varphi)/G$ compatible with the two projections. Choose a fundamental domain $D \subseteq M^\varphi$ for the action of $G$ and write $\bar{D}$ for its closure. For a finite subset $S \subseteq G$, define
\[
L_S \; = \; M^\varphi \smallsetminus \bigcup_{g \in S} \bar{D}.g ,
\]
and note that this is an open subspace of $M^\varphi$. The family of subspaces
\[
\{ L_S \cap (\minner)^\varphi \mid S \text{ finite subset of } G \}
\]
may be thought of as a ``neighbourhood basis of infinity'' for $(\minner)^\varphi$ in the vertical (fibre) direction, in the same way that the family of subspaces
\[
\{ M_{\epsilon}^\varphi \mid \epsilon > 0 \}
\]
may be thought of as a ``neighbourhood basis of infinity'' for $(\minner)^\varphi$ in the horizontal direction. Taking unions and indexing over all pairs $(\epsilon , S)$ with $\epsilon > 0$ and $S \subseteq G$ a finite subset, we obtain a full ``neighbourhood basis of infinity'' for $(\minner)^\varphi$, in the sense that
\[
\{ M^\varphi \smallsetminus (L_S \cup M_{\epsilon}^\varphi) \mid \epsilon > 0 , \, S \text{ finite subset of } G \}
\]
is a cofinal family of compact subspaces of $(\minner)^\varphi$. Hence we have
\[
H_*^{\lf}((\minner)^\varphi) \;\cong\; \underset{\epsilon , S}{\mathrm{lim}} \, H_*(M^\varphi , L_S \cup M_{\epsilon}^\varphi)
\]
and similarly
\[
H_*^{\lf}((M_{\mathrm{green}})^\varphi) \;\cong\; \underset{\epsilon , S}{\mathrm{lim}} \, H_*((M_{\mathrm{green}})^\varphi , (M_{\mathrm{green}})^\varphi \cap (L_S \cup M_{\epsilon}^\varphi)),
\]
where we have now dropped the coefficients $k$ from the notation. In order to prove that \eqref{eq:inclusion-5} is an isomorphism and finish the proof, we therefore just have to show that the inclusion
\begin{equation}
\label{eq:inclusion-6}
((M_{\mathrm{green}})^\varphi , (M_{\mathrm{green}})^\varphi \cap (L_S \cup M_{\epsilon}^\varphi)) \too (M^\varphi , L_S \cup M_{\epsilon}^\varphi)
\end{equation}
induces isomorphisms on relative homology for every $\epsilon > 0$ and finite subset $S \subseteq G$. We will do this by lifting the arguments from the proof of Theorem \ref{thm-lf-basis}, where we had to show that
\[
(M_{\mathrm{green}} , M_{\mathrm{green}} \cap M_\epsilon) \too (M , M_\epsilon)
\]
induces isomorphisms on relative homology. We did this by factoring the inclusion as
\begin{equation}
\label{eq:inclusion-7}
M_{\mathrm{green}} \too M_\Gamma = \cN_0 \too \cN'_{\delta} \too \cN_\delta \too M,
\end{equation}
and considering the pairs of spaces given by intersecting each subspace with $M_\epsilon$. Similarly, we now factor the inclusion \eqref{eq:inclusion-6} by taking the preimages of all of the subspaces \eqref{eq:inclusion-7} under the covering $M^\varphi \to M$ and considering the pairs of spaces given by intersecting each subspace with $L_S \cup M_{\epsilon}^\varphi$. This gives us four inclusions of pairs of spaces, and we have to show that each of these induces isomorphisms on relative homology.

In the proof of Theorem \ref{thm-lf-basis}, for the first and third inclusions of \eqref{eq:inclusion-7}, this was proven by excision arguments, and for the second and fourth inclusions it was proved by a deformation retraction of pairs. The two excision arguments carry over identically to the corresponding inclusions in the covering space -- in each case, one has to show that a certain pair of subsets forms an excisive covering, which is true for the same reasons as in the base space.

In order to lift the deformation retraction to the covering space, to complete the argument for the other two inclusions, we have to be slightly more careful. The deformation retraction $\{h_t\}$ of the surface $\Sigma$ onto $\Gamma$ from the proof of Theorem \ref{thm-lf-basis} has the property that each $h_t$ is \emph{non-expanding} (it does not increase distances between points), and this was essential to ensure that the induced deformation retraction of the configuration space $M$ takes the subspace $M_\epsilon$ into itself, so that it is a deformation retraction of \emph{pairs} of spaces.

The deformation retraction of the configuration space $M$ lifts to a deformation retraction of the covering space $M^\varphi$, by covering space theory, and it automatically has the property that it takes $M_{\epsilon}^\varphi$ into itself. It remains to show that it additionally has the property that it takes $L_S$ into itself, so that we again have a deformation retraction of \emph{pairs} of spaces, and so all of the arguments of the proof of Theorem \ref{thm-lf-basis} involving the deformation retraction will lift to the covering space and finish the proof.

To ensure this, we make sure that we choose the fundamental domain $D \subseteq M^\varphi$ compatibly with the deformation retraction, so that whenever $p \in M^\varphi \smallsetminus \bar{D}$, the path followed by $p$ under the lift of the deformation retraction remains in $M^\varphi \smallsetminus \bar{D}$, i.e., it remains outside of the closure of the fundamental domain. Since the deformation retraction of the covering space is a lift of a deformation retraction of the base, it is equivariant under the action of $G$, so the same property holds when replacing $\bar{D}$ with $\bar{D}.g$ for any $g \in G$. Since $L_S$ is an intersection of subspaces of the form $M^\varphi \smallsetminus \bar{D}.g$, this implies that the deformation retraction takes $L_S$ into itself, as required.
\end{proof}

\subsection{Some elements of the locally-finite homology of the covering}\label{elements}

\begin{defn}
\label{def:helix}
Choose $m$ pairwise disjoint embeddings $\gamma_1,\ldots,\gamma_m \colon S^1 \hookrightarrow \Sigma \smallsetminus \binner$. These determine an embedding
\[
\gamma \colon (S^1)^m \longhookrightarrow \minner .
\]
Suppose that, for each $i \in \{1,\ldots,m\}$, the loop in $\minner$ given by restricting $\gamma$ to the $i$-th copy of $S^1$ does \emph{not} lift to a loop in the covering $M^\varphi \to M$, and neither does any positive power of this loop. Denote the preimage of $\gamma((S^1)^m)$ under $(\minner)^\varphi \to \minner$ by
\[
\mathrm{helix}(\gamma) \subseteq (\minner)^\varphi .
\]
The name comes from the fact that this is, intuitively, a product of $m$ infinite helices covering the embedded $m$-torus in $\minner$. This is an orientable, properly-embedded submanifold (homeomorphic to $\bR^m$), so its fundamental class is an element
\[
[\mathrm{helix}(\gamma)] \in H_m^{\lf}((\minner)^\varphi ; k).
\]
When it is non-zero, this element does not lie in the image of the homomorphism \eqref{eq:base-to-covering}.
\end{defn}

\begin{eg}
As a simple example, take $m=1$ and consider the two green embedded circles in Figure \ref{fig-spiral}. (They are drawn as spirals to indicate that we will lift them to infinite helices.) If we take $\gamma_1$ in Definition \ref{def:helix} to be the left-hand circle, we have $[\mathrm{helix}(\gamma)] = 0$. This is because the left-hand green circle, and hence its preimage in $(\minner)^\varphi$, may be homotoped arbitrarily close to the black boundary-component that it encircles, which is a non-compact end of $(\minner)^\varphi$, so it is nullhomologous as a locally-finite cycle. On the other hand, if we take $\gamma_1$ in Definition \ref{def:helix} to be the right-hand circle, we have
\begin{equation}
\label{eq:helix-formula}
[\mathrm{helix}(\gamma)] = \cV_e . \sum_{i \in \bZ} (1-y)(yz)^i,
\end{equation}
where $\cV_e$ denotes the fundamental class of the properly embedded open red interval in Figure \ref{fig-spiral} and $y,z \in G$ denote the monodromy of the covering around small loops encircling the two black boundary-components inside the right-hand green circle.
\end{eg}

\begin{figure}[ht]
\centering
\includegraphics[scale=0.4]{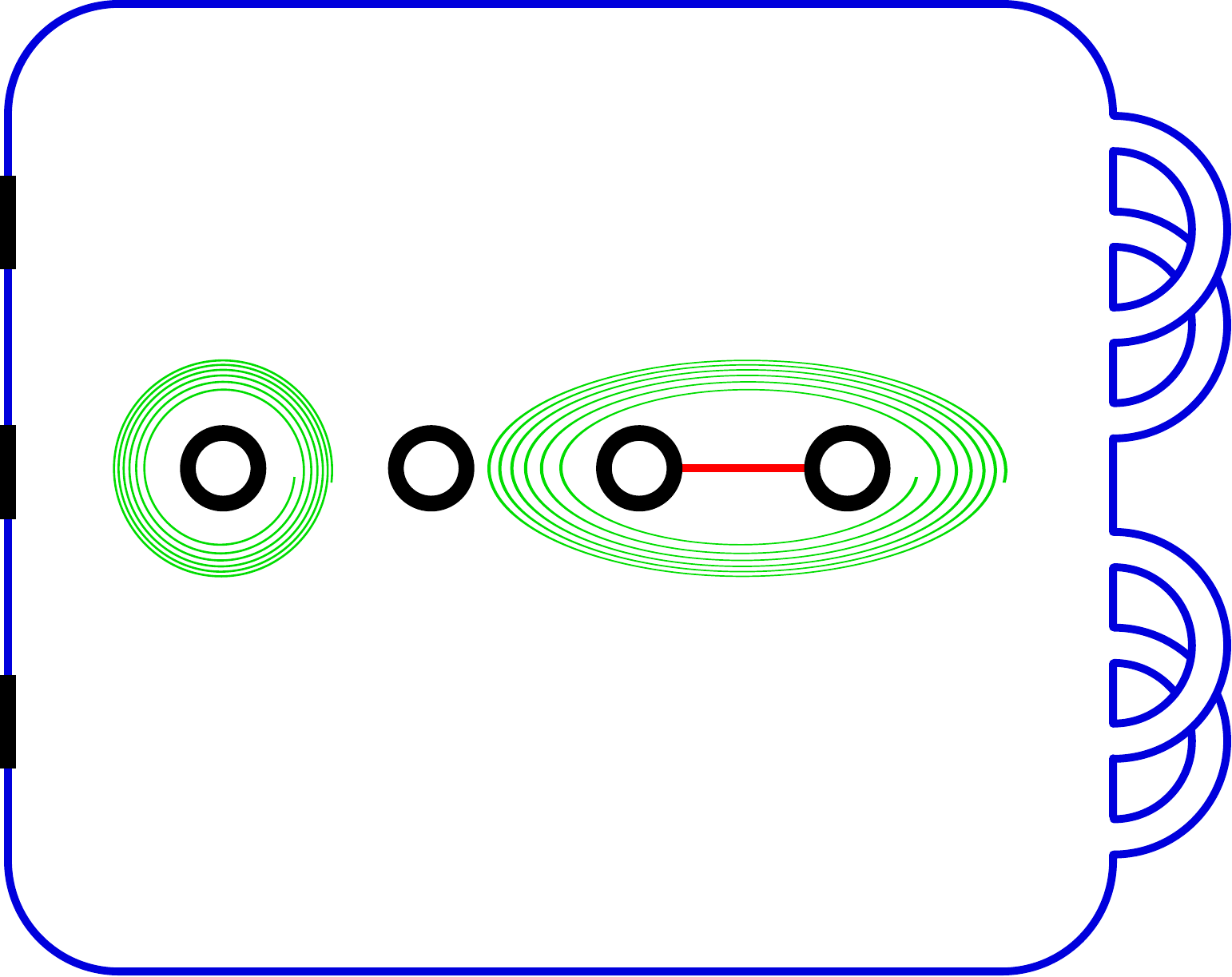}
\caption{Two elements of $H_m^{\lf}((\minner)^\varphi ; k)$ given by infinite helices as in Definition \ref{def:helix}. The left-hand helix is nullhomologous, but the right-hand helix is non-trivial and given by the formula \eqref{eq:helix-formula} in terms of the identification of Theorem \ref{thm-lf-basis-covering}. In particular, it lies outside of the image of the homomorphism \eqref{eq:base-to-covering}.}
\label{fig-spiral}
\end{figure}

\section{Formula for the intersection form in the covering}\label{formula}

In this part, we restrict to the case when the local system comes from a covering space. In this situation, we aim to relate the homology twisted by this local system with the untwisted homology of the corresponding covering space. 
First, we define certain special elements in the homology of the covering space. In order to do this, we will fix some collections of paths from the boundary to the submanifolds that we have defined in the previous sections. Let us make this precise. We fix a basepoint $\bf d$ on the boundary of the configuration space and let $\tilde{\bf d}$ be a lift of it in the $\varphi$-covering. Next, we consider a path in the configuration space from the basepoint $\bf d$ 
towards each of the following basic manifolds:
\[
{\bV}_e, \ {\bG}_e, \ {\bU}_e, \ {\bD}_e.
\]
We do this by drawing $n-1+k+2g$ segments on our surface, and then considering the image of their product quotiented to the configuration space. We denote these paths by:
$$\eta^V_e, \ \eta^G_e, \ \eta^U_e, \ \eta^D_e.$$
Now, we define the paths $$\tilde{\eta}^V_e, \ \tilde{\eta}^G_e, \ \tilde{\eta}^U_e, \ \tilde{\eta}^D_e$$ to be the unique lifts of these paths in the corresponding coverings, through the point $\tilde{\bf d}$.

\begin{defn}(Special elements in the homology of the coverings)\\
Let us consider the unique lifts of the submanifolds ${\bV}_e,{\bG}_e,{\bU}_e,{\bD}_e$ in the coverings corresponding to $\varphi$, through the points $$\tilde{\eta}^V_e(1), \tilde{\eta}^G_e(1), \tilde{\eta}^U_e(1), \tilde{\eta}^D_e(1)$$ respectively, and denote their homology classes by:
\begin{equation}
\begin{aligned}
[\tilde{\bV}_e] &\subseteq H^{\lf}_m\left((\mouter)^{\varphi}\right)\\
[\tilde{\bG}_e] &\subseteq H_m \left( (\mouter)^{\varphi},\partial (\mouter)^{\varphi}\right)\\
[\tilde{\bU}_e] &\subseteq H^{\lf}_m\left((\minner)^{\varphi}\right)\\
[\tilde{\bD}_e] &\subseteq H_m \left( (\minner)^{\varphi},\partial (\minner)^{\varphi}\right).
\end{aligned}
\end{equation}
\end{defn}
\begin{defn}(Subspaces in the homology of the covering generated by special classes)\\
We define the subspaces generated by these families in the homology of the coverings $(\minner)^\varphi$ and $(\mouter)^\varphi$ as below:
\begin{equation}\label{eq:VGUD}
\begin{aligned}
H^{\lf,f}_m((\mouter)^\varphi) &\coloneqq \langle [\tilde{\bV}_e] \mid e \in E_{n-1+k+2g,m} \rangle \subseteq H^{\lf}_m\left((\mouter)^{\varphi}\right)\\
H_m^{\partial,f}((\mouter)^\varphi) &\coloneqq \langle [\tilde{\bG}_e] \mid e \in E_{n-1+k+2g,m} \rangle \subseteq H_m \left( (\mouter)^{\varphi},\partial (\mouter)^{\varphi}\right)\\
H^{\lf,f}_m((\minner)^\varphi) &\coloneqq \langle [\tilde{\bU}_e] \mid e \in E_{n-1+k+2g,m} \rangle \subseteq H^{\lf}_m\left((\minner)^{\varphi}\right)\\
H_m^{\partial,f}((\minner)^\varphi) &\coloneqq \langle [\tilde{\bD}_e] \mid e \in E_{n-1+k+2g,m} \rangle \subseteq H_m \left( (\minner)^{\varphi},\partial (\minner)^{\varphi}\right).
\end{aligned}
\end{equation}
\end{defn}

Now we use the results presented in section \S\ref{coverings} concerning the relation between the twisted homology and the homology of the corresponding covering space. More specifically, the choice of the paths in the base configuration space determines the following corresponding morphisms between these two types of homologies, which preserve the special elements accordingly.

\begin{prop}[Equivariant injections between twisted homologies and homologies of coverings]\label{prop:equivariant-injections}
There are isomorphisms as follows, equivariant with respect to the $\Gamma(\Sigma, \bouter)$-actions:
\begin{equation}\label{eq:injection-lf-out}
\begin{aligned}
H_m^{\lf}(\mouter) &\too H_m^{\lf,f}((\mouter)^\varphi)\\
{\cV}_e & \dashrightarrow [\tilde{\bV}_e]
\end{aligned}
\end{equation}
\begin{equation}\label{eq:injection-bd-out}
\begin{aligned}
H_m^{\partial}(\mouter) &\too H_m^{\partial,f}((\mouter)^\varphi)\\
{\cG}_e & \dashrightarrow [\tilde{\bG}_e]
\end{aligned}
\end{equation}
\begin{equation}\label{eq:injection-lf-in}
\begin{aligned}
H_m^{\lf}(\minner) &\too H_m^{\lf,f}((\minner)^\varphi)\\
{\cU}_e & \dashrightarrow [\tilde{\bU}_e]
\end{aligned}
\end{equation}
\begin{equation}\label{eq:injection-bd-in}
\begin{aligned}
H_m^{\partial}(\minner) &\too H_m^{\partial,f}((\minner)^\varphi)\\
{\cD}_e & \dashrightarrow [\tilde{\bD}_e].
\end{aligned}
\end{equation}
\end{prop}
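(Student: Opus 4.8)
The plan is to obtain all four maps as restrictions of the two comparison homomorphisms — between the twisted homology of the base and the untwisted homology of the covering — that were analysed in \S\ref{coverings}. Concretely, the two maps \eqref{eq:injection-bd-out} and \eqref{eq:injection-bd-in} between relative homology groups will be restrictions of the relative Shapiro isomorphism \eqref{eq:Shapiro-relative}, while the two maps \eqref{eq:injection-lf-out} and \eqref{eq:injection-lf-in} between locally-finite homology groups will be codomain-restrictions of the homomorphism \eqref{eq:base-to-covering} of Definition \ref{def-base-to-covering}, whose structure is completely described by Theorem \ref{thm-lf-basis-covering}. Both input homomorphisms are induced by the covering projection $M^\varphi \to M$ and are natural for self-homeomorphisms of $M$ equipped with a compatible lift; since every mapping class in $\Gamma(\Sigma;\bouter)$ acts on $M$ by such a homeomorphism — because $\varphi$ is $\Diff(\Sigma;\bouter)$-invariant (Example \ref{eg:local-system}) — the resulting four maps will automatically be $\Gamma(\Sigma;\bouter)$-equivariant.

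I would treat \eqref{eq:injection-bd-out} in detail; the case \eqref{eq:injection-bd-in} is identical with $\mouter$ replaced by $\minner$. Applying \eqref{eq:Shapiro-relative} with $X=\mouter$, $A=\partial\mouter$ and $Y=(\mouter)^\varphi$ — legitimate because $\partial(\mouter)^\varphi$ is precisely the restriction of the covering to $\partial\mouter$ — gives a natural isomorphism
\[
H_m(\mouter,\partial\mouter;\cL) \;\xrightarrow{\ \cong\ }\; H_m\bigl((\mouter)^\varphi,\partial(\mouter)^\varphi;k\bigr).
\]
The first task is to check that this isomorphism carries each geometric generator $\cG_e$ of the submodule $H_m^\partial(\mouter)\subseteq H_m(\mouter,\partial\mouter)$ to the lifted class $[\tilde{\bG}_e]$. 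This is done by unwinding the chain-level description of the Shapiro isomorphism from \S\ref{Shapiro}, where a twisted relative cycle $\sum_i\lambda_i(\tilde\sigma_i\otimes g_i)$ is sent to $\sum_i\lambda_i(\pi\circ\tilde\sigma_i).g_i$ in the covering: representing $\cG_e$ by a relative cycle supported on the submanifold $\bG_e$ and normalised using the chosen path $\eta^G_e$, and pushing it through the covering projection while keeping track of the deck-transformation factors, one finds that its image is supported on the unique lift of $\bG_e$ through the chosen lifted point $\tilde{\eta}^G_e(1)$, which by definition represents $[\tilde{\bG}_e]$. Since $\{\cG_e\}$ is a free $R$-basis of $H_m^\partial(\mouter)$ (Lemma \ref{L:2}), the displayed isomorphism restricts to an isomorphism of $H_m^\partial(\mouter)$ onto $\langle[\tilde{\bG}_e]\rangle = H_m^{\partial,f}((\mouter)^\varphi)$, which is the asserted map \eqref{eq:injection-bd-out}; its equivariance is immediate from naturality, as explained above.

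For \eqref{eq:injection-lf-out} (and, analogously, \eqref{eq:injection-lf-in}) one cannot use Shapiro's lemma, which fails for Borel--Moore homology of infinitely-sheeted coverings; instead one uses the homomorphism \eqref{eq:base-to-covering}. By Theorem \ref{thm-lf-basis-covering} this homomorphism is identified with the inclusion $\bigoplus_{\cB_{\mathrm{out}}}k[G]\hookrightarrow\bigoplus_{\cB_{\mathrm{out}}}k[[G]]$, which is in particular injective and which sends the $\cV_e$-indexed basis vector of $\bigoplus_{\cB_{\mathrm{out}}}k[G]\cong H_m^{\lf}(\mouter)$ (recall $H_m^{\lf}(\mouter)=\bar{H}_m^{\lf}(\mouter)$ by Lemma \ref{lem:generating}) to the element $1$ in the corresponding copy of $k[G]\subseteq k[[G]]$. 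Tracing the chain-level map of Definition \ref{def-base-to-covering} applied to a properly embedded locally-finite cycle supported on $\bV_e$ and normalised via $\eta^V_e$, exactly as in the previous paragraph, identifies this image with the lifted class $[\tilde{\bV}_e]$. Hence the image of \eqref{eq:base-to-covering} is precisely $\langle[\tilde{\bV}_e]\rangle = H_m^{\lf,f}((\mouter)^\varphi)$, so \eqref{eq:base-to-covering} is an isomorphism onto it, equivariant by naturality.

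The step I expect to be the main obstacle is exactly the ``tracing'' appearing in the two preceding paragraphs: verifying that the chain-level maps send the chosen representative of each geometric generator to a cycle supported on the \emph{specific} lift singled out by the lifted basepoint $\tilde{\mathbf{d}}$ and the lifted paths $\tilde{\eta}^{\bullet}_e$, rather than on some deck-translate of it. This is the only place where the choices made at the beginning of \S\ref{formula} genuinely enter; it amounts to keeping the $G$-torsor of lifts consistent across all four families, while everything else is a formal consequence of \S\ref{coverings} together with the freeness statements of Lemmas \ref{L:2} and \ref{lem:generating}.
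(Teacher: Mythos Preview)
Your proposal is correct and follows essentially the same route as the paper: Shapiro's lemma \eqref{eq:Shapiro-relative} for the relative maps \eqref{eq:injection-bd-out}, \eqref{eq:injection-bd-in}, and Theorem \ref{thm-lf-basis-covering} for the locally-finite maps \eqref{eq:injection-lf-out}, \eqref{eq:injection-lf-in}, with surjectivity read off from the definition \eqref{eq:VGUD} of the target subspaces. The paper's proof dispatches in one line the point you flag as the main obstacle --- that the generators land on the \emph{specified} lifts through $\tilde{\mathbf d}$ --- by simply declaring it holds ``by construction''; your chain-level tracing via the paths $\eta^\bullet_e$ is exactly what that phrase is hiding, so you are being more scrupulous than the paper, not diverging from it.
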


\begin{proof}
Injectivity of the maps \eqref{eq:injection-lf-out} and \eqref{eq:injection-lf-in} follows from Theorem \ref{thm-lf-basis-covering}. Injectivity (and indeed bijectivity) of the maps
\[
H_m(\mouter,\partial\mouter) \too H_m((\mouter)^\varphi , \partial(\mouter)^\varphi)
\]
(and similarly for $\minner$) follows from Shapiro's lemma -- see \S\ref{Shapiro}, and hence their restrictions to \eqref{eq:injection-bd-out} and \eqref{eq:injection-bd-in} are also injective. The maps act as indicated on the special homology classes by construction. Surjectivity then follows from \eqref{eq:VGUD}.
\end{proof}

\begin{coro}\label{CorADO}
There are $\Gamma(\Sigma, \bouter)$-equivariant injective homomorphisms as follows:
\begin{equation}
\begin{aligned}
H_m^{\partial,f}((\minner)^\varphi) &\longhookrightarrow H_m^{\lf,f}((\mouter)^\varphi), \\
H_m^{\partial,f}((\mouter)^\varphi) &\longhookrightarrow H_m^{\lf,f}((\minner)^\varphi).
\end{aligned}
\end{equation}
\end{coro}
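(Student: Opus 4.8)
\textit{Proof proposal.} The plan is to obtain this corollary with no new geometry, simply by transporting the embeddings of Theorem~\ref{thm-diagonal} (equivalently, Corollary~\ref{coro-embeddings}) from the base configuration spaces to their $\varphi$-coverings along the $\Gamma(\Sigma,\bouter)$-equivariant isomorphisms of Proposition~\ref{prop:equivariant-injections}. Concretely, I would define the first homomorphism of the corollary as the composite
\[
H_m^{\partial,f}((\minner)^\varphi) \xrightarrow{\ \cong\ } H_m^\partial(\minner) \xrightarrow{\ \iota_{in}\ } H_m^{\lf}(\mouter) \xrightarrow{\ \cong\ } H_m^{\lf,f}((\mouter)^\varphi),
\]
where the first arrow is the inverse of the isomorphism \eqref{eq:injection-bd-in}, the last arrow is the isomorphism \eqref{eq:injection-lf-out}, and the middle arrow $\iota_{in}$ is the geometric map from the first part of Theorem~\ref{thm-diagonal} constructed in \S\ref{embeddings}. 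The second homomorphism is defined in the mirror-image way, as $H_m^{\partial,f}((\mouter)^\varphi) \xrightarrow{\cong} H_m^\partial(\mouter) \xrightarrow{\iota_{out}} H_m^{\lf}(\minner) \xrightarrow{\cong} H_m^{\lf,f}((\minner)^\varphi)$, using \eqref{eq:injection-bd-out}, $\iota_{out}$ and \eqref{eq:injection-lf-in}. Note that each outer arrow is an isomorphism \emph{onto} the relevant ``$f$''-subspace (this is the content of Proposition~\ref{prop:equivariant-injections}, whose proof combines Theorem~\ref{thm-lf-basis-covering} with Shapiro's lemma of \S\ref{Shapiro}), so the composite is well defined with codomain exactly the subspace appearing in the statement.

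Next I would check the two asserted properties. Equivariance is automatic: each of the three maps in each composite is $\Gamma(\Sigma,\bouter)$-equivariant --- the outer two by Proposition~\ref{prop:equivariant-injections} and the middle one by the first part of Theorem~\ref{thm-diagonal} --- so the composites are. For injectivity, the two outer arrows are isomorphisms, so it suffices that $\iota_{in}$ and $\iota_{out}$ be injective, and this is precisely Corollary~\ref{coro-embeddings}; it is valid here under the running hypothesis that $\cL$ is $u$-homogeneous and the quantum factorials $[r]_u!$ are non-zero-divisors in $R=k[G]$, which in the setting of \S\ref{formula} holds for instance whenever $k$ is an integral domain and the homogeneity unit has infinite order, as in Example~\ref{eg:homogeneous}. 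Equivalently --- and this is how I would actually phrase it --- one reads off from the lemma of \S\ref{embeddings} relating the special classes, which gives $\iota_{in}(\cU_e)=\prod_i [e_i]_u!\,\cV_e$ and $\iota_{out}(\cG_e)=\prod_i [e_i]_u!\,\cD_e$, that the whole composite is \emph{diagonal} in the explicit bases (sending the generator indexed by $e$ on the left to $\prod_i [e_i]_u!$ times the generator indexed by $e$ on the right), which exhibits injectivity directly and also records the diagonal entries explicitly.

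The only step needing real care --- and the one I would nail down first --- is the bookkeeping that makes the composite \emph{land in} the correct ``$f$''-subspace and agree with the stated class-to-class description: one must know that $\iota_{in}$ carries the spanning set $\{\cU_e\}$ into $\bar H_m^{\lf}(\mouter)=\langle\cV_e\rangle$ (so that post-composing with \eqref{eq:injection-lf-out} is legitimate), and that the identifications of Proposition~\ref{prop:equivariant-injections} match the spanning classes on the two sides via the chosen system of lifting paths $\tilde\eta^V_e,\tilde\eta^G_e,\tilde\eta^U_e,\tilde\eta^D_e$. The first of these is exactly Lemma~\ref{lem:generating} together with the diagonal formula above, and the second is built into the construction in Proposition~\ref{prop:equivariant-injections} and \eqref{eq:VGUD}. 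Once this matching is in place the corollary is a purely formal consequence of Theorem~\ref{thm-diagonal} and Proposition~\ref{prop:equivariant-injections}, with no further argument required; so I expect the ``main obstacle'' to be purely notational, namely keeping the in/out decorations and the four families $\bU,\bV,\bG,\bD$ straight while chasing the diagram.
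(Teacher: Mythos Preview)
Your proposal is correct and follows exactly the paper's route: the paper's own proof is the single sentence ``This is a consequence of Theorem~\ref{thm-diagonal} and the isomorphisms of Proposition~\ref{prop:equivariant-injections},'' and your argument simply unpacks that composite carefully, including the diagonal formula and the hypothesis from Corollary~\ref{coro-embeddings} needed for injectivity. Your additional bookkeeping about landing in the correct ``$f$''-subspaces is a welcome clarification rather than a different method.
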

\begin{proof}
This is a consequence of Theorem \ref{thm-diagonal} and the isomorphisms of Proposition \ref{prop:equivariant-injections}.
\end{proof}

\begin{rmk}\label{RkADO}
We use Corollary \ref{CorADO} in \cite{Anghel2019, Anghel2020, Anghel2020a}, where we pass from the subspace generated by multiforks in the Borel-Moore homology to the subspace generated by the same underlying manifolds in the Borel-Moore homology relative just to the ``part of infinity'' concerning the punctures. The former is exactly the homology relative to the boundary of little discs around punctures, so exactly $H_m^{\partial,f}((\minner)^\varphi)$. The injectivity of this result, together with the equivariance, ensures for us that in the corresponding context (punctured disk, with genus zero and $k=0$ or $k=1$), the braid group actions correspond in the two homologies. 
\end{rmk}

\subsection{Formula for the intersection form in the covering}
Finally, we show that these subrepresentations in the homology of covering spaces are related by intersection pairings, which can be computed explicitly. Combining the results on intersection pairings from Theorem \ref{thm-pairings} with the isomorphisms of Proposition \ref{prop:equivariant-injections}, we have the following.
\begin{prop}\label{prop-pairings}
There are \textbf{non-degenerate} $\Gamma(\Sigma;\bouter)$-invariant pairings:
\begin{align}
\pairing \colon H_m^{\lf,f}((\minner)^{\varphi}) \otimes H_m^{\partial,f}((\minner)^{\varphi}) &\too R, \label{eq:pairing-in-covering} \\
\pairing \colon H_m^{\lf,f}((\mouter)^{\varphi}) \otimes H_m^{\partial,f}((\mouter)^{\varphi}) &\too R.
\end{align}
\end{prop}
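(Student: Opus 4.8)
The plan is to obtain these pairings by transporting the non-degenerate pairings of Theorem~\ref{thm-pairings} across the $\Gamma(\Sigma;\bouter)$-equivariant isomorphisms of Proposition~\ref{prop:equivariant-injections}. Concretely, for \eqref{eq:pairing-in-covering}, given classes $a \in H_m^{\lf,f}((\minner)^{\varphi})$ and $b \in H_m^{\partial,f}((\minner)^{\varphi})$, I would let $\hat a \in H_m^{\lf}(\minner)$ and $\hat b \in H_m^{\partial}(\minner)$ be their preimages under the isomorphisms \eqref{eq:injection-lf-in} and \eqref{eq:injection-bd-in} respectively, and define
\[
\langle a, b \rangle \;:=\; \langle \hat a, \hat b \rangle ,
\]
where the right-hand side is the pairing between $H_m^{\lf}(\minner)$ and $H_m^{\partial}(\minner)$ coming from Theorem~\ref{thm-pairings} (equivalently, the pairing of Lemma~\ref{L:1}). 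Since the maps of Proposition~\ref{prop:equivariant-injections} are isomorphisms of $R$-modules, this is well defined and inherits $R$-bilinearity --- with the $\alpha$-twist on the first factor --- from the base-space pairing. The pairing for $\mouter$ is defined identically, using \eqref{eq:injection-lf-out}, \eqref{eq:injection-bd-out} and the pairing of Lemma~\ref{L:2}.

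$\Gamma(\Sigma;\bouter)$-invariance is then automatic. For $\psi \in \Gamma(\Sigma;\bouter)$, the equivariance of the isomorphisms in Proposition~\ref{prop:equivariant-injections} gives $\widehat{\psi a} = \psi \hat a$ and $\widehat{\psi b} = \psi \hat b$, so
\[
\langle \psi a, \psi b \rangle = \langle \psi \hat a, \psi \hat b \rangle = \langle \hat a, \hat b \rangle = \langle a, b \rangle ,
\]
the middle equality being the $\Gamma(\Sigma;\bouter)$-invariance asserted in Theorem~\ref{thm-pairings}.

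For non-degeneracy I would appeal to the explicit bases. The isomorphisms of Proposition~\ref{prop:equivariant-injections} carry the explicit free bases of $H_m^{\lf}(\minner)$ and $H_m^{\partial}(\minner)$ furnished by Lemma~\ref{L:1} (together with Lemma~\ref{lem:generating}) onto the spanning families that define $H_m^{\lf,f}((\minner)^{\varphi})$ and $H_m^{\partial,f}((\minner)^{\varphi})$ in \eqref{eq:VGUD}; in particular those spanning families are themselves free bases of these $R$-modules. By Lemma~\ref{L:1} the base-space pairing is the identity matrix in those bases, so the transported pairing \eqref{eq:pairing-in-covering} is the identity matrix in the corresponding bases of the covering homology. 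A pairing of free $R$-modules whose matrix in some choice of bases is the identity is in particular non-degenerate (indeed a perfect pairing), so \eqref{eq:pairing-in-covering} is non-degenerate; the argument for $\mouter$ is the same, using Lemma~\ref{L:2}.

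There is no serious obstacle here: all the content lies in Theorem~\ref{thm-pairings} and Proposition~\ref{prop:equivariant-injections}, and what remains is the formal observation that a perfect pairing of free modules transports across an equivariant isomorphism to a perfect pairing. The only point requiring care is the bookkeeping --- pairing the locally-finite isomorphism with the first tensor factor and the boundary-relative isomorphism with the second, and carrying the involution $\alpha$ along consistently --- which is routine. A concrete geometric formula for $\langle\cdot,\cdot\rangle$ on the covering, expressing it as a sum of intersection numbers of the lifted submanifolds weighted by the local system, then follows by combining this description with the local computations of \S\ref{embeddings}.
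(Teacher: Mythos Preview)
Your proposal is correct and follows exactly the paper's approach: the authors simply state that the proposition follows by ``combining the results on intersection pairings from Theorem~\ref{thm-pairings} with the isomorphisms of Proposition~\ref{prop:equivariant-injections}'', and you have spelled out precisely this transport argument in detail. The additional care you take with equivariance and with verifying non-degeneracy via the explicit bases is entirely in line with the paper's reasoning (and indeed fills in what the paper leaves implicit).
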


\paragraph{Description of the intersection form}
\hspace{0pt}\\
Let us start with two homology classes $[X]\in H_m^{\lf,f}((\minner)^{\varphi})$ and $[Y] \in H_m^{\partial,f}((\minner)^{\varphi})$. Furthermore, we assume that these classes are represented by immersed submanifolds
\[
\tilde{X}, \tilde{Y} \looparrowright (\minner)^\varphi
\]
that intersect transversely in finitely many points. Write $X$ and $Y$ respectively for their projections onto $\minner$. For each intersection point $x \in X \cap Y$, we will construct an associated loop $l_x \subset \minner$. Let $\alpha_x$ be the sign of the geometric intersection of $X$ and $Y$ at $x$.

a) {\bf Construction of $l_x$}\\
We choose two paths $\gamma_{X}, \gamma_{Y}$ in $\minner$ starting at $\bf d$ and ending on $X$ and $Y$ respectively, such that moreover $\tilde{\gamma}_{X}(1) \in \tilde{X}$ and $\tilde{\gamma}_{Y}(1) \in \tilde{Y}$, where $\tilde{\gamma}_X$ and $\tilde{\gamma}_Y$ are the lifts of $\gamma_X$ and $\gamma_Y$ starting at $\tilde{\bf d}$. We then choose paths $\delta_{X}, \delta_{Y} \colon [0,1]\rightarrow \minner$ such that:
\begin{equation}
\begin{cases}
\mathrm{Im}(\delta_{X})\subseteq X; \delta_{X}(0) = \gamma_{X}(1);  \delta_{X}(1) = x\\
\mathrm{Im}(\delta_{Y})\subseteq Y; \delta_{Y}(0) = \gamma_{Y}(1);  \delta_{Y}(1) = x.
\end{cases}
\end{equation}
Now we define the loop $l_x \subset \minner$ by $l_x=\gamma_{Y}\circ\delta_{Y}\circ (\delta_{X})^{-1}\circ (\gamma_{X})^{-1}$.

b) {\bf Formula for the intersection form}\\
The intersection form \eqref{eq:pairing-in-covering} from Proposition \ref{prop-pairings} may then be computed using these loops and the local system, as follows:
\begin{equation}\label{eq:geometric-formula}
\langle [\tilde{X}],[\tilde{Y}]\rangle \; = \sum_{x \in X \cap Y} \alpha_x \cdot \varphi(l_x) .
\end{equation}


\phantomsection
\addcontentsline{toc}{section}{References}
\renewcommand{\bibfont}{\normalfont\small}
\setlength{\bibitemsep}{0pt}
\printbibliography


\vspace{0pt plus 1filll}

\noindent {\itshape Mathematical Institute, University of Oxford, Woodstock Road, Oxford, OX2 6GG, United Kingdom}

\noindent {\tt palmeranghel@maths.ox.ac.uk}

\noindent \href{http://www.cristinaanghel.ro/}{www.cristinaanghel.ro}

\noindent {\itshape Institutul de Matematică Simion Stoilow al Academiei Române (IMAR), 21 Calea Griviței, 010702 București, Romania}

\noindent {\tt mpanghel@imar.ro}

\noindent \href{https://mdp.ac}{mdp.ac}

\end{document}